\documentclass{amsart}%
\usepackage{amsmath}
\usepackage{amsfonts}
\usepackage{amssymb}
\usepackage{graphicx}%
\setcounter{MaxMatrixCols}{30}
\providecommand{\U}[1]{\protect\rule{.1in}{.1in}}
\newtheorem{theorem}{Theorem}

\newtheorem{conjecture}[theorem]{Conjecture}
\newtheorem{corollary}[theorem]{Corollary}

\newtheorem{definition}[theorem]{Definition}

\newtheorem{lemma}[theorem]{Lemma}

\newtheorem{problem}[theorem]{Problem}

\newtheorem{proposition}[theorem]{Proposition}
\newtheorem{remark}[theorem]{Remark}

\begin{document}

\title{Rank, combinatorial cost and homology torsion growth in higher rank lattices}
\author{Miklos Abert, Tsachik Gelander and Nikolay Nikolov}

\thanks{M. A. is supported by the ERC Consolidator Grant 648017 and the MTA Lendulet Groups and Graph limits grant,
T.G. acknowledges the support of ISF-Moked grant 2095/15,
N.N. acknowledges the support of EPSRC grant EP/H045112/1 and the Clay Mathematical Institute}

\date{}

\begin{abstract}
We investigate the rank gradient and growth of torsion in homology in
residually finite groups. As a tool, we introduce a new complexity notion
for generating sets, using measured groupoids and combinatorial cost. 

As an application we prove the vanishing of the above invariants for Farber
sequences of subgroups of right angled groups. A group is right angled if it
can be generated by a sequence of elements of infinite order such that any
two consecutive elements commute. 

Most non-uniform lattices in higher rank simple Lie groups are right angled. We
provide the first examples of uniform (co-compact) right angled arithmetic
groups in $\mathrm{SL}(n,\mathbb{R}),~n\geq 3$ and $\mathrm{SO}(p,q)$ for
some values of $p,q$. This is a class of lattices for which the Congruence
Subgroup Property is not known in general.

Using rigidity theory and the notion of invariant random subgroups it
follows that both the rank gradient and the homology torsion growth vanish
for an arbitrary sequence of subgroups in any right angled lattice in a
higher rank simple Lie group.
\end{abstract}

\maketitle

\section{Introduction}
In this paper we deal with the asymptotic behavior of two delicate group invariants, the minimal size of a generating set (the rank), and the size of the torsion part of the first homology. In general, it is extremely hard to determine the rank, or even decide whether it is finite.
Finite generation of lattices in connected semisimple Lie groups $G$ was proved case by case by many different authors, notably in the works of Garland and Raghunathan \cite{Ga-Ra} for $\mathbb R$-rank$(G) = 1$ and of Kazhdan \cite{Kazhdan} for the case where all the factors of $G$ have $\mathbb R$-rank $\ge 2$.  
Torsion in homology is a more recent object of investigation, related to algebraic topology and number theory (see \cite{luck,berg}). 

The asymptotic behavior of Betti numbers and more generally, representation multiplicities associated to lattices in Lie groups have been extensively studied since the 80's (see \cite{degeorge, delorme} and the introduction of \cite{Samurais}). In recent years, there has been new progress in this direction (see \cite{Samurais,samurai,LaFi1,LaFi2, FLM}). These invariants can be effectively handled using the analytic side of the theory and limiting arguments, like the L\"{u}ck approximation theorem \cite{luck}. For the rank and the torsion in homology, there do exist some analytic tools, namely the cost and the $L^2$-torsion. However, their efficiency is much more limited. 
 
In this paper we develop new, geometric tools to estimate the rank and the torsion in homology for subgroups of finite index in a special class of groups. This class includes non-uniform as well as uniform lattices in simple Lie groups. 

\medskip 

\subsection{Rank and homology torsion growth in right angled groups}
For a finitely generated group $\Gamma$ let $d(\Gamma)$ denote the minimal
number of generators (or rank) of $\Gamma$. For a subgroup $H\leq\Gamma$ of
finite index let
\[
r(\Gamma,H)=(d(H)-1)/\left\vert \Gamma:H\right\vert.
\]
The \emph{rank gradient} of $\Gamma$ with respect to a sequence
$(\Gamma_{n})$ of finite index subgroups is defined to be
\[
\mathrm{RG}(\Gamma,(\Gamma_{n}))=\lim_{n\rightarrow\infty}r(\Gamma,\Gamma
_{n})
\]
when this limit exists. This notion has been introduced by Lackenby
\cite{lack} and further investigated in the literature for \emph{chains} of
subgroups. Recall that a chain in $\Gamma$ is a decreasing sequence
$\Gamma=\Gamma_{0}>\Gamma_{1}>\ldots$ of subgroups of finite index in $\Gamma
$. In this case, it is easy to see that $r(\Gamma,\Gamma_{n})$ is
non-increasing and so the limit exists.

For a finitely generated group $H$ let $\mathrm{trs}(H)$ denote the size of
the torsion part of the Abelianization (or first homology group) $H/H^{\prime
}$. The \emph{homology torsion growth} of $\Gamma$ with respect to
$(\Gamma_{n})$ is defined as
\[
\text{Trs}(\Gamma,(\Gamma_{n}))=\lim_{n\rightarrow\infty}\frac{\ln\mathrm{trs}%
(\Gamma_{n})}{\left\vert \Gamma:\Gamma_{n}\right\vert }
\]
when this limit exists. It is easy to see that when $\Gamma$ is finitely presented this sequence is bounded, see Lemma \ref{simplebound}. The homology torsion growth has been analysed in \cite{berg} and \cite{Lueck}.
In this paper we deal only with the first homology.

The following definition is the central object of investigation of this paper. 

\begin{definition}
The group $\Gamma$ is \emph{right-angled} if it admits a finite generating list
$\{\gamma_{1},\ldots,\gamma_{m}\}$ of elements of infinite order such that
$[\gamma_{i},\gamma_{i+1}]=1$ for $i=1,\ldots,m-1$.
\end{definition}

So, a right-angled group is a quotient of a right-angled Artin group with a connected defining graph where the order of the generators stay infinite. The importance of this class is reflected both in its ubiquity and in the strength of the results one can prove for such groups, mainly when combined together with rigidity theory. There are plenty of non-uniform and, as we shall prove, some uniform lattices in higher rank simple Lie groups that are right angled.  

We now state our main vanishing theorems concerning rank and homology torsion growth for higher rank right angled lattices.

\begin{theorem}
\label{main1}
Let $G$ be a simple real Lie group of real rank at least $2$ and let $\Gamma$ be
a right-angled lattice in $G$. Then for any sequence of subgroups $(\Gamma
_{n})$ in $\Gamma$ such that $\left\vert \Gamma:\Gamma_{n}\right\vert
\rightarrow\infty$ we have
\[
\mathrm{RG}(\Gamma,(\Gamma_{n}))=0
\text{.}
\]
\end{theorem}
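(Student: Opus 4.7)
My plan is to obtain Theorem~\ref{main1} by combining two results treated earlier in the paper: the vanishing of the rank gradient along Farber sequences in an arbitrary right-angled group, and the rigidity statement that for lattices in higher rank simple Lie groups any sequence of finite index subgroups with $\left\vert\Gamma:\Gamma_{n}\right\vert \to \infty$ is automatically a Farber sequence. Granted these two ingredients, the conclusion is immediate, so the work consists in explaining how each enters.

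For the rigidity ingredient, I attach to each $\Gamma_n$ the atomic invariant random subgroup $\mu_n$ obtained by uniformly averaging over its finitely many $\Gamma$-conjugates. The space $\mathrm{IRS}(\Gamma)$ is weak-$*$ compact, so it suffices to show that every subsequential limit of $(\mu_n)$ is the trivial IRS $\delta_{\{e\}}$, since this is precisely the Farber condition on $(\Gamma_n)$. Inducing an IRS of $\Gamma$ up to an IRS of $G$, and applying the Stuck--Zimmer / Nevo--Stuck--Zimmer classification of IRSs in higher rank simple real Lie groups, one learns that every ergodic IRS of $\Gamma$ is either supported on the centre (giving $\delta_{\{e\}}$) or on finite index subgroups. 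The latter possibility is excluded because $\left\vert\Gamma:\Gamma_{n}\right\vert \to \infty$ forces the $\mu_n$-mass of every fixed finite index subgroup to vanish. Hence every accumulation point of $(\mu_n)$ is $\delta_{\{e\}}$ and the full sequence is Farber.

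Once the Farber condition is in hand, the vanishing theorem for right-angled groups applies and yields $\mathrm{RG}(\Gamma,(\Gamma_n)) = 0$. The main obstacle in the overall approach is in that right-angled Farber statement itself: one must convert the commutation relations $[\gamma_i,\gamma_{i+1}] = 1$ into a cost-economy bound for the induced measured groupoid on the coset space, and this is where the paper's new complexity notion and combinatorial-cost machinery do the heavy lifting. By comparison, the rigidity reduction above is a ``soft'' step; its only technical subtlety is the passage to accumulation points in $\mathrm{IRS}(\Gamma)$, but since every such accumulation point is the trivial IRS, the full sequence $(\Gamma_n)$ converges to $\delta_{\{e\}}$ and the Farber vanishing theorem can be invoked directly.
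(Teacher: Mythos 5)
Your proposal is correct and follows essentially the same route as the paper: the proof of Theorem~\ref{main1} given in Section~\ref{Section:CombCost} consists exactly of invoking Theorem~\ref{mu_n} (every such sequence is Farber, obtained by inducing the IRSs $\mu_{\Gamma_n}$ to $G$ and appealing to the Nevo--Stuck--Zimmer-type classification from \cite{Samurais}, via Lemma~\ref{induced}) and then applying Theorem~\ref{ra}, the Farber-sequence vanishing theorem for right-angled groups. Your sketch of the rigidity step matches how the cited result is actually established, so there is nothing to add.
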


One can attempt to further relax on the condition of having a common ambient
lattice $\Gamma$ in Theorem \ref{main1} and just consider arbitrary sequences
of lattices in the fixed Lie group $G$. It was shown in \cite{Ge} that given a
simple Lie group $G$ and fixing the Haar measure on $G$, there exists a
constant $C$ such that
\[
d(\Gamma)\leq C\cdot\text{Vol}(G/\Gamma)
\]
for every lattice $\Gamma\leq G$. In higher rank, the growth of rank seems to
be sub-linear in the volume. We make the following provocative conjecture.

\begin{conjecture}
\label{kerdes}Let $G$ be a higher rank simple Lie group with a fixed Haar
measure. Let $\Gamma_{n}$ be a sequence of lattices in $G$ with $\rm{Vol}(G/\Gamma
_{n})\rightarrow\infty$. Then
\[
\lim_{n\rightarrow\infty}\frac{d(\Gamma_{n})-1}{\rm{Vol}(G/\Gamma_{n}%
)}=0\text{.}
\]

\end{conjecture}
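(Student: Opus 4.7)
The plan is to leverage Theorem~\ref{main1} through a combination of Margulis arithmeticity, commensurability reduction, and invariant random subgroup (IRS) compactness. Since the statement is a limit, it suffices to show that every subsequence of $(\Gamma_n)$ admits a further sub-subsequence along which $(d(\Gamma_n) - 1)/\mathrm{Vol}(G/\Gamma_n) \to 0$.

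By Margulis arithmeticity every $\Gamma_n$ is arithmetic. After passing to a subsequence, split into two cases. In the first case, the $\Gamma_n$ all lie in a single commensurability class $\mathcal{C}$. Assume---and this is the crucial structural hypothesis one must secure---that $\mathcal{C}$ contains a right-angled lattice $\Lambda$. Set $\Delta_n = \Gamma_n \cap \Lambda$, of finite index in both $\Gamma_n$ and $\Lambda$, with the covolume identity $[\Lambda:\Delta_n]\,\mathrm{Vol}(G/\Lambda) = [\Gamma_n:\Delta_n]\,\mathrm{Vol}(G/\Gamma_n)$ forcing $[\Lambda:\Delta_n] \to \infty$. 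Theorem~\ref{main1} applied inside $\Lambda$ then gives $(d(\Delta_n) - 1)/[\Lambda:\Delta_n] \to 0$. Combining this with the elementary Schreier-type bound $d(\Gamma_n) \leq d(\Delta_n) + [\Gamma_n:\Delta_n] - 1$ (adjoin a coset transversal to a generating set of $\Delta_n$) and the covolume identity yields the desired $(d(\Gamma_n) - 1)/\mathrm{Vol}(G/\Gamma_n) \to 0$, provided the residual $[\Gamma_n:\Delta_n]/\mathrm{Vol}(G/\Gamma_n)$ remains controlled (which holds, e.g., whenever $[\Gamma_n:\Delta_n]$ stays bounded, so one may further split into sub-subsequences accordingly).

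In the second case, the $\Gamma_n$ exhaust infinitely many commensurability classes, and one appeals to IRS compactness. The lattices define IRSs $\mu_n$ on the Chabauty space of $G$; by Stuck--Zimmer rigidity in higher rank, every ergodic IRS of $G$ is either $\delta_{\{e\}}$ or supported on a conjugacy class of lattices, and the covolume divergence hypothesis forces every weak-$*$ limit of $\mu_n$ to be $\delta_{\{e\}}$, that is, $(\Gamma_n)$ Benjamini--Schramm converges to $G$. To conclude one needs a semicontinuity theorem relating normalized rank to the limiting IRS; the natural route is through the combinatorial cost machinery developed earlier in this paper, which should produce a cost-theoretic upper bound on rank gradient that is upper semicontinuous under BS-convergence and that vanishes at $\delta_{\{e\}}$.

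The main obstacle, by a wide margin, is the existence of a right-angled lattice in every commensurability class of a higher rank simple $G$. The present paper's construction covers only some commensurability classes in $\mathrm{SL}(n,\mathbb{R})$ and $\mathrm{SO}(p,q)$; extending it to every higher rank $G$ and to every commensurability class would require a universal mechanism for producing long chains of commuting infinite-order elements inside arbitrary higher rank arithmetic groups---perhaps through systematic use of chains of $\mathbb{Q}$-subtori or of involutions of the algebraic envelope---and appears to be the deepest ingredient. A secondary obstacle is the absence of a ready-made semicontinuity theorem for rank under BS-convergence, needed to close the cross-class reduction, together with the analogous control of $[\Gamma_n:\Delta_n]$ within a single commensurability class when it grows unboundedly.
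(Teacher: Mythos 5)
The statement you are trying to prove is stated in the paper as a \emph{conjecture}, explicitly flagged as ``provocative'' and left open; the paper offers no proof of it, and indeed its Section on open problems (Problem \ref{bigproblem}) records the key missing ingredient as an open question. Your proposal is therefore not being measured against an existing argument, and, to your credit, you have correctly diagnosed that it does not close: both of the obstacles you name at the end are genuine and are exactly where the conjecture currently stands.

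To make the gaps concrete: (i) In the single-commensurability-class case, your reduction needs a right-angled lattice in \emph{every} commensurability class of lattices in every higher rank simple $G$. The paper's constructions (Theorems \ref{coco} and \ref{SO(7,2)}) produce such lattices only in specific arithmetic families in $\mathrm{SL}(n,\mathbb{R})$ and some $\mathrm{SO}(p,q)$; no universal mechanism is known, and for some groups (e.g.\ those whose arithmetic lattices come from division algebras with no obvious commuting infinite-order chains) it is unclear one exists. Even granting it, the term $[\Gamma_n:\Delta_n]/\mathrm{Vol}(G/\Gamma_n)$ need not stay controlled when the indices $[\Gamma_n:\Delta_n]$ are unbounded, and the Schreier bound $d(\Gamma_n)\le d(\Delta_n)+[\Gamma_n:\Delta_n]$ then contributes a non-negligible linear error. (ii) In the cross-class case, the ``semicontinuity theorem relating normalized rank to the limiting IRS'' that you invoke is not a corollary of the paper's combinatorial-cost machinery: that machinery (Theorem \ref{rgcc}, Lemma \ref{lenyeg}) lives on Schreier graphs of a \emph{fixed} finitely generated group with a fixed generating set, i.e.\ on Farber sequences of subgroups of one ambient $\Gamma$. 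For lattices ranging over infinitely many commensurability classes there is no common Cayley graph to rewire, so the needed semicontinuity statement is precisely Problem \ref{bigproblem} of the paper --- assuming it here makes the argument circular. The IRS convergence $\mu_{\Gamma_n}\to\delta_{\{e\}}$ from Stuck--Zimmer/\cite{Samurais} is correct, but it is the input to the open problem, not a substitute for its solution.
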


In rank $1$, this is false in general. Indeed, if $G$ admits a lattice
$\Gamma$ that surjects on a non-abelian free group $F$, then the set of finite
index subgroups in $F$ leads to a counterexample. (For instance, the groups
$SO(n,1)$ ($n\geq2$) admit such co-compact as well as non-co-compact lattices). However, our conjecture
does admit a natural generalization to arbitrary Lie groups, using the
framework of invariant random subgroups. See Section \ref{sec:open} for details.

Note that Theorem \ref{main1} together with the existence results (Theorem \ref{coco} and Theorem \ref{SO(7,2)}) below provide the first examples
of sequences of cocompact lattices in a simple Lie group of higher rank where
the rank grows sublinearly in the volume.

We now turn to the main result concerning torsion.

\begin{theorem}\label{main2}
Let $G$, $\Gamma$ and $(\Gamma_n)$ be as in Theorem \ref{main1}. Then
\[
\rm{Trs}(\Gamma,(\Gamma_{n}))=0.
\]
\end{theorem}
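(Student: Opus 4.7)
The plan is to run a close parallel to the proof of Theorem \ref{main1}, upgrading the rank-gradient vanishing to a torsion vanishing by keeping track of the short defining relations that the right-angled structure supplies for free. The setup is identical: by the Stuck--Zimmer and Nevo--Stuck--Zimmer rigidity of invariant random subgroups in higher rank simple Lie groups, any sequence $(\Gamma_n)$ with $[\Gamma:\Gamma_n]\to\infty$ Benjamini--Schramm converges to the trivial IRS, so $(\Gamma_n)$ is a Farber sequence and the right-angled/combinatorial-cost machinery of the paper is applicable.

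The bridge from a presentation to torsion is the following Hadamard-type estimate. If a group $H$ admits a presentation with $n$ generators and relators of length at most $L$, then the abelianization $H/H'$ has a presentation matrix over $\mathbb{Z}$ with entries bounded by $L$; Hadamard's inequality then gives
\[
\ln \mathrm{trs}(H) \leq n\bigl(\tfrac{1}{2}\ln n + \ln L\bigr).
\]
The crude form of this bound, applied to the Schreier presentation coming from a fixed presentation of $\Gamma$, already yields the boundedness noted in Lemma \ref{simplebound}. To drive the growth to zero one must produce, for each $n$, a presentation of $\Gamma_n$ with $o([\Gamma:\Gamma_n])$ generators and with relator length at most $\exp\bigl(o([\Gamma:\Gamma_n]/d(\Gamma_n))\bigr)$; a polynomial bound $L\leq [\Gamma:\Gamma_n]^{O(1)}$ is comfortably sufficient.

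Both ingredients are provided by the right-angled list $\gamma_1,\dots,\gamma_m$. Each pair $A_i=\langle\gamma_i,\gamma_{i+1}\rangle$ is a rank-two free abelian group acting on the Schreier space $X_n=\Gamma/\Gamma_n$; Farber convergence forces the $A_i$-actions to have asymptotically long orbits, and the combinatorial-cost construction behind Theorem \ref{main1} produces a generating set of $\Gamma_n$ of size $o([\Gamma:\Gamma_n])$ whose elements are words in the $\gamma_j$ of length polynomial in the index. The defining commutator relations $[\gamma_i,\gamma_{i+1}]=1$ translate, on each $A_i$-orbit in $X_n$, into a family of short relations between consecutive Schreier generators; taking the union yields a presentation of $\Gamma_n$ meeting the hypotheses of the Hadamard estimate, from which $\mathrm{Trs}(\Gamma,(\Gamma_n))=0$ follows.

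The main obstacle is the simultaneous control of generator count and relator length. For rank gradient one can argue via an arbitrary spanning subgroupoid in the measured groupoid of $\Gamma\curvearrowright X_n$, discarding all information about relations. For torsion one must keep the relations short as well, and this is precisely what the commuting pairs accomplish: they furnish length-four relators for every neighboring pair of generators and thereby let the combinatorial-cost argument be run with relation lengths still polynomially bounded in the index. Verifying that the cost/Farber machinery developed earlier in the paper survives this extra bookkeeping is the substantive piece of the proof.
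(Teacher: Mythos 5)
Your outline coincides with the paper's: rigidity (Theorem \ref{thm:samurais}) makes $(\Gamma_n)$ Farber, a Hadamard/Smith-normal-form bound (Lemma \ref{torlemma}) controls $\mathrm{trs}$ by the number of generators times the log of the relator length, and the right-angled rewiring supplies cheap generating sets with short words. But the proposal stops exactly where the real work begins, and the step you defer is not mere bookkeeping. The combinatorial-cost argument produces a small generating set of the measured groupoid of $\Gamma\curvearrowright\Gamma/\Gamma_n$; to bound torsion you need a \emph{presentation} of $\Gamma_n$ on (essentially) that generating set, with all relators short. Your suggestion to take "the union of the short commutator relations on each $A_i$-orbit" cannot work as stated: the commutators $[\gamma_i,\gamma_{i+1}]$ do not present $\Gamma$ (a right-angled group is a proper quotient of a RAAG in general), so the resulting complex only presents a group surjecting onto $\Gamma_n$, and a surjection $Q\twoheadrightarrow\Gamma_n$ gives no upper bound on $\mathrm{trs}(\Gamma_n)$ (e.g.\ $\mathbb{Z}$ surjects onto arbitrarily large finite cyclic groups). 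The paper's solution is the rewired presentation complex of Section \ref{torsion}: one starts from the full presentation $2$-complex of $\Gamma$, lifts it to $M_n$, rewires the $1$-skeleton to the cheap groupoid generating set $F_n$, and replaces every $2$-cell by a type I disc (rewritten old relators) together with type II discs (compatibility of the two labelings); Proposition \ref{haha} then shows $\pi_1N_n\cong\pi_1M_n$, so $N_n$ genuinely presents $\Gamma_n$, with relator length at most $4bd_r^4$.

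There is also a quantitative slip. With $x_n=o([\Gamma:\Gamma_n])$ generators and relator length $L_n$ polynomial in the index, Hadamard gives $\ln\mathrm{trs}(\Gamma_n)\le x_n\cdot O(\ln[\Gamma:\Gamma_n])$, which is \emph{not} $o([\Gamma:\Gamma_n])$ unless $x_n=o([\Gamma:\Gamma_n]/\ln[\Gamma:\Gamma_n])$; so "polynomial in the index is comfortably sufficient" is false for the generating sets actually produced (of size $\approx\varepsilon_r[\Gamma:\Gamma_n]$ with $\varepsilon_r$ fixed as $n\to\infty$). The correct mechanism is the double limit with the subexponential-distortion condition $\varepsilon_r\ln d_r\to 0$: for fixed $r$ the relator length is bounded by a constant $4bd_r^4$ \emph{independent of} $n$, the generator density tends to $\varepsilon_r$ as $n\to\infty$ (Farber convergence kills the exceptional-vertex and stabilizer terms $\delta_n^{(r)},\gamma_n^{(r)}$), and then $r\to\infty$ works because the effective Gaboriau rewiring has distortion $d_r=(2R+1)^k=O(\varepsilon_r^{-k})$, i.e.\ polynomial in $1/\varepsilon_r$. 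This trade-off between edge density and bi-Lipschitz distortion is the new content of Theorem \ref{ccsofapprox} over the soft cost-one statement, and your proposal does not establish it.
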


Note that in this situation, $\Gamma_{n}$ has property (T)\ and so
$\mathrm{trs}(\Gamma_{n})$ equals the size of the Abelianization of
$\Gamma_{n}$. 

\medskip 

For \emph{non-uniform} lattices, the analogues of both Theorem \ref{main1} and Theorem \ref{main2} can be derived
from the existing literature, using Raghunathan's theorem \cite{Ra1,Ra2} that such lattices satisfy the so-called
Congruence Subgroup Property (CSP), adding a result of Sharma and
Venkataramana \cite{venky} and some finite group theory. We will do this in
Section \ref{estimates} where we also give effective bounds on both the torsion and the
rank when the CSP\ holds. However, the CSP is far from being proved in the
cocompact (uniform) case and so the above, arithmetic methods fail. Our proofs for Theorems \ref{main1} and \ref{main2} are inherently
geometric and do not use arithmetic arguments.

\subsection{Existence of cocompact right angled lattices}
We prove the existence of co-compact higher
rank right angled lattices in various classes of simple Lie groups. In particular we obtain:

\begin{theorem}
\label{coco}The groups $SL_{n}(\mathbb{R})$ admit right angled co-compact lattices for
$n>2$.
\end{theorem}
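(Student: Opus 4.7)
The plan is to combine a standard arithmetic construction of cocompact lattices in $SL_n(\mathbb{R})$ via central simple algebras with an explicit choice of commuting infinite-order generators drawn from maximal subfields.

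First I would produce the ambient lattice. Let $K$ be a totally real number field and $D$ a central division algebra of degree $n$ over $K$, chosen so that $D$ splits at exactly one archimedean place $v_{0}$ (which is automatic for odd $n$ with $K=\mathbb{Q}$, and for even $n$ is arranged by prescribing the local invariants by Albert--Brauer--Hasse--Noether). Then $SL_{1}(D)$ is an anisotropic $K$-form of $SL_{n}$, and the reduced-norm-one unit group $\Gamma=SL_{1}(\mathcal{O}_{D})$ of an order embeds via $v_{0}$ as a cocompact arithmetic lattice in $SL_{n}(\mathbb{R})$. Since $D$ is a division algebra, $\Gamma$ contains no unipotent elements, so the right-angled list must be built entirely from semisimple (``hyperbolic'') elements.

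Next I would produce the commuting list from chains of maximal tori. Each maximal subfield $L\subset D$ has $[L:K]=n$, and the norm-one units $\mathcal{O}_{L}^{1}$ form a finitely generated abelian subgroup of $\Gamma$; Dirichlet guarantees infinite-order elements in $\mathcal{O}_{L}^{1}$ as soon as $L$ has enough noncomplex archimedean places. I would pick a sequence of such subfields $L_{1},\dots,L_{k}\subset D$ so that consecutive $L_{i}$ and $L_{i+1}$ share a nonscalar element $\delta_{i}\in L_{i}\cap L_{i+1}$, select finitely many infinite-order elements inside each $\mathcal{O}_{L_{i}}^{1}$, and concatenate them into a single list with the $\delta_{i}$ acting as pivots bridging consecutive blocks. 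Every pair of successive entries of the list then lies in a common $\mathcal{O}_{L_{i}}^{1}$ and therefore commutes, and all entries are of infinite order by construction.

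The main obstacle is to verify that this commuting list generates a finite-index subgroup of $\Gamma$, which then replaces $\Gamma$ as the desired right-angled cocompact lattice. Zariski density of the union of the chosen tori inside $SL_{1}(D)$ can be arranged by taking the $L_{i}$ in sufficiently generic position so that they do not all lie in a common proper algebraic subgroup; but promoting Zariski density to finite index inside the arithmetic lattice is more delicate. This is where strong approximation for the simply connected group $SL_{1}(D)$ should enter, reducing the question to a surjectivity statement at the finite congruence quotients of $\Gamma$, which can in turn be controlled by a careful choice of the local data defining $D$ together with the subfield chain. The flexibility in these arithmetic choices should provide enough room to meet all the constraints simultaneously, but the verification of finite-index generation from the commuting list is the step I expect to be most technical.
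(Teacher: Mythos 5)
Your overall architecture --- a cocompact arithmetic lattice, commuting infinite-order semisimple elements drawn from a chain of tori, and a finite-index generation step --- has the right shape, but two of your steps break, and the paper's choice of arithmetic form is made precisely to avoid the first. If $D$ is a central division algebra of prime degree $n$ over $K$ (in particular for $n=3$, the first case of the theorem), every nonscalar element of $D$ generates a maximal subfield, so two \emph{distinct} maximal subfields intersect in $K$ alone, and the corresponding maximal tori of $\Gamma=SL_1(\mathcal{O}_D)$ intersect in the finite center. There are then no nonscalar pivots $\delta_i$, and no two infinite-order elements lying in distinct tori commute; a right-angled list confined to a single torus generates an abelian group, which is never of finite index. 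The paper sidesteps this by working with an \emph{outer} form: $\Gamma$ is the arithmetic subgroup of $SU(f)$ for an anisotropic Hermitian form $f$ over a quadratic extension $L/K$ (cocompact because $f$ is definite at one real place). There the relevant tori are the diagonal groups $A_B=\{\mathrm{diag}(u^{d_1},\dots,u^{d_n}):\sum d_i=0\}$ attached to $f$-orthogonal bases $B$, where $u$ is an infinite-order unit with $uu^{\sigma}=1$ supplied by a Dirichlet rank count; two such tori share an infinite cyclic subgroup as soon as the bases share a vector, which is exactly the flexibility your inner form lacks.

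The second gap is the finite-index step, which you correctly flag as the hard point but then propose to handle by Zariski density plus strong approximation. That route does not close: a finitely generated Zariski-dense subgroup of $\Gamma$ can be thin (of infinite index) while still surjecting onto all congruence quotients, and the congruence subgroup property is not known for these cocompact lattices, so there is no way to descend from congruence quotients back to finite index. The paper's mechanism is Margulis' normal subgroup theorem: choose one infinite-order element $g_1$ in one torus; its normal closure has finite index, so finitely many conjugates $g_1^{\gamma_i}$ already generate a finite-index subgroup; each conjugate lies in the conjugated torus $A_{\gamma_i B}$; and a separate combinatorial lemma (the transitive closure of the ``share a vector'' relation connects any two orthogonal bases) interpolates a chain of tori with infinite consecutive intersections through all of them, yielding the right-angled generating list. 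To repair your argument you would need both an analogue of this connectivity lemma for maximal subfields of $D$ --- which, as above, is false when $n$ is prime --- and the normal subgroup theorem in place of strong approximation.
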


For a detailed description of these examples,  see Section \ref{section:SL(n)}. In particular, it is not known whether these lattices satisfy the CSP (or bounded generation).
In addition, we demonstrate our method for constructing right angled lattices in some special orthogonal groups where the geometric picture is more transparent:

%
%

\begin{theorem}\label{SO(7,2)}
\label{O} Let $K=\mathbb{Q}(\sqrt{2})$ and denote its ring of integers
$\mathcal{O}_{K}=\mathbb{Z}[\sqrt{2}]$. For an integer  $n \geq 7$ let
\[
f(x_{1},\ldots,x_{n+2})=x_{1}^{2}+\ldots+x_{n}^{2}-\sqrt{2}x_{n+1}^{2}-\sqrt
{2}x_{n+2}^{2}%
\]
be a quadratic form on $V=K^{n+2}$. Let $\mathbb{G}=SO(f)$ and $\Gamma
=\mathbb{G}(\mathcal{O}_{K})$. Then $\Gamma$ is a uniform lattice in
$G=\mathbb{G}(\mathbb{R})\cong SO(n,2)$ admitting a finite index right-angled subgroup.
\end{theorem}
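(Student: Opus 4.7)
The plan splits the theorem into cocompactness of $\Gamma$ in $G$ and the construction of a right-angled subgroup of finite index.

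For cocompactness I would apply Borel--Harish-Chandra to the restriction of scalars $\mathrm{Res}_{K/\mathbb{Q}}\mathbb{G}$. Its real points factor as $\mathbb{G}(\mathbb{R})\times\mathbb{G}^{\sigma}(\mathbb{R})$, where $\sigma$ is the nontrivial element of $\mathrm{Gal}(K/\mathbb{Q})$; the conjugate form
\[
\sigma f = x_1^2+\cdots+x_n^2 + \sqrt{2}\,x_{n+1}^2 + \sqrt{2}\,x_{n+2}^2
\]
is positive definite on $\mathbb{R}^{n+2}$, so $\mathbb{G}^{\sigma}(\mathbb{R})\cong SO(n+2)$ is compact. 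Any nontrivial $K$-rational zero of $f$ would under $\sigma$ give a nontrivial real zero of the positive-definite $\sigma f$, which is impossible. Hence $f$ is $K$-anisotropic, Borel--Harish-Chandra realises $\Gamma$ as a cocompact lattice in the product, and projecting off the compact factor gives a cocompact lattice in $SO(n,2)$.

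For the right-angled subgroup I would use coordinate sub-orthogonal groups. For each $T\subseteq\{1,\ldots,n+2\}$ put $V_T=\mathrm{span}\{e_t:t\in T\}$ and
\[
H_T := SO(f|_{V_T})(\mathcal{O}_K),
\]
embedded in $\Gamma$ by the identity on $V_T^{\perp}$. Then $T\cap T'=\emptyset$ implies $[H_T,H_{T'}]=1$. For a triple $T=\{i,j,k\}$ with $i\ne j$ in $\{1,\ldots,n\}$ and $k\in\{n+1,n+2\}$, the form $f|_{V_T}$ has real signature $(2,1)$ and positive-definite Galois conjugate, so the same Borel--Harish-Chandra argument makes $H_T$ a cocompact arithmetic lattice in $SO(2,1)$. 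After passing to a global torsion-free finite-index subgroup of $\Gamma$ via Selberg's lemma, each such $H_T$ is finitely generated by elements of infinite order.

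The hypothesis $n\geq 7$ enters a combinatorial step: one exhibits a sequence of such triples $T_1,\ldots,T_N$ with $T_s\cap T_{s+1}=\emptyset$ for every $s$ and $\bigcup_s T_s=\{1,\ldots,n+2\}$. A concrete starting block is
\[
T_1=\{1,2,n+1\},\ T_2=\{3,4,n+2\},\ T_3=\{5,6,n+1\},\ T_4=\{7,1,n+2\},
\]
which uses all seven first-type coordinates while alternating $k\in\{n+1,n+2\}$ so that consecutive triples are disjoint; this already covers every index when $n=7$, and can be extended by analogous alternating triples for larger $n$. Fix a finite generating list of infinite-order elements for each $H_{T_s}$ and \emph{interleave} the lists along the path: because consecutive $H_{T_s},H_{T_{s+1}}$ commute, one can insert generators of $H_{T_{s+1}}$ between successive generators of $H_{T_s}$ so that every consecutive pair of entries in the resulting global list commutes. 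This produces a right-angled generating list for $\Lambda:=\langle H_{T_1},\ldots,H_{T_N}\rangle$.

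The main obstacle is verifying that $\Lambda$ has finite index in $\Gamma$. Zariski density of $\Lambda$ in $\mathbb{G}$ is immediate, since the $V_{T_s}$ span $V$ and provide enough sub-$SO$ subgroups to generate $SO(f)$ as an algebraic group. Promoting Zariski density to finite index is delicate; my plan is to invoke strong approximation for the simply connected cover $\mathrm{Spin}(f)$ and check that for all but finitely many prime ideals $\mathfrak{p}$ of $\mathcal{O}_K$ the reductions of the $H_{T_s}$ mod $\mathfrak{p}$ generate the corresponding finite spin group, so the closure of $\Lambda$ in the congruence topology contains every principal congruence subgroup of $\Gamma$. Combined with the fact that $\Gamma$ is finitely generated and linear (hence has only finitely many subgroups of each given finite index), this forces $[\Gamma:\Lambda]<\infty$, and the interleaved list becomes a right-angled generating set for a finite-index subgroup of $\Gamma$.
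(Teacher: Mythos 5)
Your treatment of cocompactness (anisotropy of $f$ from the positive definiteness of the Galois conjugate form, plus Borel--Harish-Chandra and projection off the compact factor) is fine, and the idea of producing the right-angled list from commuting arithmetic $SO(2,1)$-subgroups attached to pairwise orthogonal signature-$(2,1)$ subspaces is exactly the paper's. The fatal problem is the final step: strong approximation does \emph{not} promote Zariski density to finite index. The Nori--Weisfeiler / Matthews--Vaserstein--Weisfeiler theorem says that a finitely generated Zariski-dense subgroup of $\mathrm{Spin}(f)(\mathcal{O}_K)$ has \emph{open closure} in the congruence completion and surjects onto $\mathrm{Spin}(f)(\mathcal{O}_K/\mathfrak{p})$ for almost all $\mathfrak{p}$ --- but every finitely generated Zariski-dense subgroup of \emph{infinite} index (a ``thin'' subgroup) also satisfies these conclusions. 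Density in the congruence (or even the full profinite) topology does not imply finite index, and the remark that $\Gamma$ has only finitely many subgroups of each finite index is irrelevant to the implication you want. So nothing in your argument rules out that $\Lambda=\langle H_{T_1},\dots,H_{T_N}\rangle$ is thin, and I do not see how to rule this out for your specific coordinate subgroups by these methods. (There are also smaller wrinkles --- e.g.\ after passing to a torsion-free subgroup via Selberg the groups $H_{T_s}$ change, and your interleaving must avoid placing two non-commuting elements of the same $H_{T_s}$ adjacently --- but these are repairable.)

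The paper obtains finite index from a completely different source: the Margulis normal subgroup theorem. One fixes a single infinite-order element $a=h_W$ supported on one $W$ in the set $\mathcal{L}$ of \emph{all} signature-$(2,1)$ subspaces of $V$; by the normal subgroup theorem the normal closure $\Gamma_0$ of $a$ has finite index, hence is finitely generated, hence is generated by finitely many conjugates $a^{\gamma_i}$, which lie in the $SO(2,1)$-subgroups attached to the (generally non-coordinate) subspaces $\gamma_i W$. The combinatorial input is then Proposition \ref{sim}: orthogonality generates a single equivalence class on all of $\mathcal{L}$ (proved via Lemma \ref{neg}, using the density of $\mathbb{Q}(\sqrt2)$ in $\mathbb{R}$ and $n\ge 7$ to guarantee enough room in orthogonal complements), which lets one interpolate a chain of pairwise orthogonal subspaces through the $\gamma_i W$ and so arrange all the generators, plus auxiliary elements $h_{W_j}$, into a right-angled list for $\Gamma_0$. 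In other words, the paper starts from a set that is \emph{guaranteed} to generate a finite-index subgroup and uses orthogonality only to connect its members; your proposal inverts this order and gets stuck exactly where the normal subgroup theorem is needed. To repair it you would have to either prove $[\Gamma:\Lambda]<\infty$ by a genuinely different argument or restructure along the paper's lines, extending your orthogonality graph from coordinate triples to all of $\mathcal{L}$ so that it can accommodate arbitrary $\Gamma$-conjugates.
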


It is natural to ask at this point which simple Lie groups admit right-angled
cocompact lattices. It is clear that by our method such lattices can be
constructed in every orthogonal group $SO(p,q)$ as long as $q\geq2$ and $p$ is
not too small. 

These new examples are interesting in their own right, as they seem to possess properties that usually reflect higher $\mathbb{Q}$-rank, i.e. properties of non-uniform arithmetic groups. Thus they might be suitable candidates to study various notions that are known to hold for non-uniform lattices but are widely open for uniform lattices,
such as bounded generation.

\subsection{Combinatorial cost and sofic approximations}
The proofs of Theorem \ref{main1} and Theorem \ref{main2} use combinatorial cost, a notion introduced
by Elek \cite{elek}. This is a metric invariant of a sequence of finite graphs
$G_{n}$. A \emph{rewiring} of $G_{n}$ is another sequence of graphs $H_{n}$ on
the same vertex set as of $G_{n}$, such that the bi-Lipshitz distortion of the
identity map stays bounded in $n$. The combinatorial cost $cc(G_{n})$ is the
infimum of the edge densities of possible rewirings of $G_{n}$.

A theorem of Gaboriau \cite{gabor} says that every free action of a right
angled group has cost $1$. Adapting his proof to the finite graph setting
gives us the following effective version.

\begin{theorem}
\label{ccsofapprox}Let $\Gamma$ be a right angled group and let $(G_{n})$ be a
sofic approximation of $\Gamma$. Then there exists $\varepsilon_{k}%
\rightarrow0$ and rewirings of $(G_{n})$ of edge density $1+\varepsilon_{k}$
with bi-Lipshitz distortion that is polynomial in $1/\varepsilon_{k}$. In
particular, the combinatorial cost of $(G_{n})$ equals $1$.
\end{theorem}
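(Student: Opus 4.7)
The plan is to adapt Gaboriau's proof that free actions of right-angled groups have cost one to the finite-graph setting, rendering it effective with polynomial distortion control. Fix the right-angled generating list $\gamma_1,\dots,\gamma_m$. By soficity, for every fixed radius $R$ all but a vanishing fraction (as $n\to\infty$) of vertices of $G_n$ have their labeled $R$-ball isomorphic to the $R$-ball in the Cayley graph of $\Gamma$; the rewiring will be performed only on these ``good'' regions, and we keep the original edges on the exceptional set, contributing only $o(1)$ to the final edge density.

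The engine is a commuting-pair rewiring applied to $(\gamma_i,\gamma_{i+1})$. On the local $\mathbb{Z}^2$-patches generated by such a pair, tile by $N\times N$ fundamental domains. On each tile, replace the $2N(N-1)$ grid edges of the pair by a Hamiltonian snake of length $N^2-1$, oriented along the $\gamma_{i+1}$-direction so that $N(N-1)$ $\gamma_{i+1}$-edges and $N-1$ $\gamma_i$-edges survive, together with one bridging edge to each neighbouring tile. The pair's per-vertex edge density drops from roughly $2$ to roughly $1$, while the bi-Lipschitz distortion introduced is $O(N)$, since snake-distance within a tile is at most $N$ times grid-distance, and snake-distance is bounded below by grid-distance.

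Now iterate along the commutation chain, applying the pair rewiring to $(\gamma_1,\gamma_2),(\gamma_2,\gamma_3),\dots,(\gamma_{m-1},\gamma_m)$, always snaking in the direction of the larger index so that the surviving $\gamma_{i+1}$-edges remain available for the next step. After $m-1$ iterations only the $\gamma_m$-direction retains full edge density, while $\gamma_1,\dots,\gamma_{m-1}$ each contribute $O(1/N)$ per vertex, giving total edge density $1+O(m/N)$; the distortion compounds multiplicatively to $O(N^{m-1})$. Setting $N_k=\lceil Cm/\varepsilon_k\rceil$ yields rewirings of edge density at most $1+\varepsilon_k$ and bi-Lipschitz distortion $O(\varepsilon_k^{-(m-1)})$, which is polynomial in $1/\varepsilon_k$; the combinatorial cost statement follows by letting $\varepsilon_k\to 0$.

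The main obstacle is the chaining step. After rewiring $(\gamma_i,\gamma_{i+1})$, the $\gamma_{i+1}$-edges needed in the next step are present only within the snake rows of the $(\gamma_i,\gamma_{i+1})$-tiles, so the $\mathbb{Z}^2$-grid that the next rewiring requires has $O(1/N)$ many holes per vertex, one at each tile boundary in the $\gamma_{i+1}$-direction. This is handled by choosing the tilings for consecutive steps coherently, so that $(\gamma_i,\gamma_{i+1})$-tile faces align with $(\gamma_{i+1},\gamma_{i+2})$-tile faces in the common $\gamma_{i+1}$-direction of the local Cayley picture, and repairing the few missing edges before the next step at a cost absorbed into the $\varepsilon$-budget. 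Verifying this compatibility uniformly across the sofic approximation and tracking the multiplicative accumulation of distortion through the chain are the technical heart of the argument.
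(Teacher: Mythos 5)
Your overall strategy is the paper's: adapt Gaboriau's commutator rewiring to finite graphs, keep one generator's edges in full, retain only a $1/N$-fraction of each other generator's edges, recover a deleted $\gamma_i$-edge at $x$ by conjugating through a short walk in the neighbouring commuting generator to a surviving $\gamma_i$-edge, and let the distortion compound through the chain to $N^{O(m)}$, which is polynomial in $1/\varepsilon$ once $\varepsilon\sim m/N$. The density and distortion bookkeeping in your sketch is consistent with this.

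The gap is precisely the step you defer to at the end, and it is not a mere technicality in the form you have set it up. First, a coherent tiling of the ``local $\mathbb{Z}^2$-patches'' by $N\times N$ fundamental domains is not available in a sofic approximation: the graph is only locally isomorphic to the Cayley graph, there is no canonical origin from which to propagate a tiling across the good region, and the subgraph spanned by the $\gamma_i$- and $\gamma_{i+1}$-edges need not resemble $\mathbb{Z}^2$ even in $\Gamma$ itself (the generating list may have repetitions, $\gamma_{i+1}$ may be a power of $\gamma_i$, or $\langle\gamma_i,\gamma_{i+1}\rangle$ may be $\mathbb{Z}\times(\mathrm{finite})$; the hypothesis only gives infinite order and commutation). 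Second, the alignment of consecutive tilings that you identify as the technical heart is a genuine obstruction in your formulation and is never carried out. Both problems vanish if you abandon the two-dimensional tiles: for each $i$ choose $X_i$ to be a \emph{maximal $R$-separated subset} of the whole vertex set with respect to the cycle metric of a single adjacent generator (the paper separates $X_i$ along the $s_{i-1}$-cycles), and keep exactly the $s_i$-edges issuing from $X_i$. Maximality puts every vertex within distance $R$ of $X_i$ along that one cycle, which is all the commutator walk $s_{i-1}^{l}s_i s_{i-1}^{-l}$ needs; separation gives density at most $1/R$ on every cycle of length at least $R$ (this is where infinite order enters, via the $(R+1)$-ball condition at non-exceptional vertices); and since each $X_i$ is chosen against the full edge set of $G_n$ rather than against the previously rewired graph, no compatibility between stages needs to be arranged --- the recursion simply replaces each auxiliary edge of $G_n$ appearing in a recovery walk by its already-constructed path in $H_n$, yielding $d_L(G_n,H_n)\le(2R+1)^{m}$ and edge density $1+(m-1)/R+o(1)$.
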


For the notion of a sofic approximation see Section \ref{soficap}. We shall say that a graph
sequence satisfying the conclusion of Theorem \ref{ccsofapprox} has \emph{polynomial distortion}. One can define subexponential distortion of graph sequences in a similar fashion.

Let $G$ be a locally compact group. Then for any closed subgroup $H$ of $G$
where the normalizer of $H$ in $G$ has finite covolume, one can define the
invariant random subgroup $\mu_{H}$ by taking a uniform random conjugate of
$H$ (against the Haar measure). A sequence of subgroups $\Gamma_{n}\leq G$ is
\emph{Farber}, if $\mu_{\Gamma_{n}}$ weakly converges to $\mu_{1}$. When $G$ is a
discrete group generated by a finite symmetric set $S$, then $\Gamma_{n}$ is
Farber, if and only if the Schreier graphs $\mathrm{Sch}(G/\Gamma_{n},S)$ give
a sofic approximation of $G$.

Our next theorem connects the rank gradient of a Farber sequence to the
combinatorial cost of the corresponding sofic approximation.

\begin{theorem}\label{rgcc}
Let $\Gamma$ be a group generated by the finite symmetric set $S $
containing the identity and let $(\Gamma_{n})$ be a Farber sequence of
subgroups of finite index in $\Gamma$ such that $\mathrm{RG}(\Gamma,(\Gamma_{n}))$ exists. Then
\[
\mathrm{RG}(\Gamma,(\Gamma_{n}))\leq\mathrm{cc}(\mathrm{Sch}(\Gamma,\Gamma
_{n},S))-1\text{.}
\]
If $(\Gamma_{n})$ is a chain, then there is equality above.
\end{theorem}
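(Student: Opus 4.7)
For the inequality, the plan is to translate any rewiring $H_n$ of $G_n := \mathrm{Sch}(\Gamma,\Gamma_n,S)$ into a small generating set of $\Gamma_n$. If $H_n$ has bi-Lipschitz distortion bounded by $L$, then for each edge $e=(v,w)$ of $H_n$ one has $d_{G_n}(v,w)\leq L$, and I may label $e$ by an element $\gamma_e\in\Gamma$ of $S$-length at most $L$ with $v\cdot\gamma_e=w$ in the coset action. Fix a spanning tree $T_n\subset H_n$ rooted at $v_0:=\Gamma_n$, and let $t_v\in\Gamma$ be the product of labels along the $T_n$-path from $v_0$ to $v$. The label map $\pi_1(H_n,v_0)\to\Gamma$ (sending a loop to the product of its labels) has image inside $\Gamma_n$; bounded distortion guarantees that every $S$-edge of $G_n$ is covered by an $H_n$-path of length at most $L$, so the image equals $\Gamma_n$. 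The $|E(H_n)|-|V(H_n)|+1$ non-tree edges of $H_n$ freely generate $\pi_1(H_n,v_0)$ via the loops $t_v\gamma_e t_w^{-1}$, so $d(\Gamma_n)\leq|E(H_n)|-|V(H_n)|+1$, which rearranges to $r(\Gamma,\Gamma_n)\leq|E(H_n)|/|V(H_n)|-1$. Passing to the limit in $n$ and the infimum over rewirings yields the inequality.

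For the equality when $(\Gamma_n)$ is a chain, I would exhibit, for each $m$, a rewiring of $(G_n)_{n>m}$ of edge density exactly $1+r(\Gamma,\Gamma_m)$. Fix a spanning tree $T\subset G_m$ and a generating set $\delta_1,\dots,\delta_{d_m}$ of $\Gamma_m$ with $|\delta_i|_S\leq L_m$. The covering $\pi:G_n\to G_m$ is $[\Gamma_m:\Gamma_n]$-sheeted and pulls $T$ back to a disjoint union of $[\Gamma_m:\Gamma_n]$ copies of $T$. Let $H_n$ consist of these lifted tree edges together with the edges of the Schreier graph of $\Gamma_m$ acting on the fiber $F:=\pi^{-1}(v_0)$ via the $\delta_i$. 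A direct count gives
\[
\frac{|E(H_n)|}{|V(H_n)|}=\frac{([\Gamma:\Gamma_m]-1)[\Gamma_m:\Gamma_n]+d_m[\Gamma_m:\Gamma_n]}{[\Gamma:\Gamma_n]}=1+\frac{d_m-1}{[\Gamma:\Gamma_m]}=1+r(\Gamma,\Gamma_m).
\]

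The main difficulty is verifying that $(H_n)$ has bi-Lipschitz distortion bounded uniformly in $n$ (for fixed $m$). The forward direction is clear since each $H_n$-edge corresponds to an $S$-path of length at most $L_m$ in $G_n$. For the reverse direction, an $s$-edge of $G_n$ projecting into $T$ is itself an $H_n$-edge; for an $s$-edge $(u,v)$ whose projection $e\in G_m$ is a non-tree edge, one routes the path by descending from $u$ to the $F$-point of its lifted tree component, traversing $F$ by a word in the $\delta_i$ representing the monodromy element $\sigma_e:=t_1 s t_2^{-1}\in\Gamma_m$ (where $t_1,t_2$ are the $T$-words from $v_0$ to the endpoints of $e$), and ascending back. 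Since $G_m$ has only finitely many non-tree edges, the maximum $\delta_i$-word length $N_m$ of the finitely many $\sigma_e$ is a finite constant depending only on $m$, bounding the resulting $H_n$-path length by $2\,\mathrm{diam}(T)+N_m$, independent of $n$. This shows $\mathrm{cc}((G_n))\leq 1+r(\Gamma,\Gamma_m)$ for every $m$; letting $m\to\infty$ and using that $r(\Gamma,\Gamma_m)\to\mathrm{RG}(\Gamma,(\Gamma_n))$ along the chain completes the equality.
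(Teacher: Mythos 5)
There is a genuine gap in your first half, at the sentence ``bounded distortion guarantees that every $S$-edge of $G_n$ is covered by an $H_n$-path of length at most $L$, so the image equals $\Gamma_n$.'' Covering the edge $(v,vs)$ by an $H_n$-path from $v$ to $vs$ only tells you that the label-product $\gamma$ of that path satisfies $v\cdot\gamma=vs$; it does \emph{not} give $\gamma=s$. When you rewrite a $G_n$-loop at $v_0$ as an $H_n$-loop, the resulting group element differs from the original one by a product of conjugates of the error elements $g(e)=s\gamma^{-1}$, which are (possibly nontrivial) elements of bounded $S$-length fixing a vertex; hence the image of $\pi_1(H_n,v_0)$ may be a proper subgroup of $\Gamma_n$. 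This is not a removable technicality: your first half never invokes the Farber hypothesis, and the inequality is false without it. For instance, take $\Gamma=F_2=\langle a,b\rangle$ and $\Gamma_n$ the preimage of $k_n\mathbb{Z}$ under $a\mapsto 0$, $b\mapsto 1$: then $r(\Gamma,\Gamma_n)=1$ for all $n$, but the Schreier graphs (all $a$-edges are loops, $b$-edges form a $k_n$-cycle) rewire with bounded distortion to single cycles, so $\mathrm{cc}=1$ and the claimed bound $\mathrm{RG}\le \mathrm{cc}-1=0$ fails; your labelled $H_n$ has $\pi_1$ mapping onto only $\langle b^{k_n}\rangle$. The paper's Lemma \ref{lenyeg} repairs exactly this point: it adjoins the set $I$ of error arrows $(x,g(e))$ to the labelled rewiring and bounds $\widetilde{\mu}(I)$ by $\sum_{g\in S^{D^2+1}\setminus\{1\}}\mathcal{P}(g\in K_n)$, which tends to $0$ precisely because $(\Gamma_n)$ is Farber. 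To fix your argument, add the finitely many extra generators $t_v\,g(e)\,t_v^{-1}$ (one for each edge $e$ of $G_n$ with $g(e)\neq 1$) to your generating set of $\Gamma_n$, and note that their number is $o(|V_n|)$ by the Farber condition, so the limit is unaffected.

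Your second half (the reverse inequality for chains) is correct and is in substance the paper's argument: the lifted spanning forest together with the fibre Schreier graph of $\Gamma_m$ on $\pi^{-1}(v_0)$ is the graph-theoretic incarnation of pulling back a cheap generating set of the groupoid $\mathcal{G}_m$ along the factor map $\mathcal{G}_n\to\mathcal{G}_m$; the edge count and the uniform distortion bound $2\,\mathrm{diam}(T)+N_m$ are right. Keep in mind, though, that the asserted equality still requires the (corrected) first half for one of the two directions.
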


The proof goes through interpreting the rank of $\Gamma_{n}$ as the minimal
number of generators for the measured groupoid associated to the action of
$\Gamma$ on $\Gamma/\Gamma_{n}$. This approach was initiated in \cite{miknik}.

The next result controls the homology torsion for sofic approximations where
there are effective rewirings to get close to the combinatorial cost.

\begin{theorem}\label{tortheorem}
\label{subexptorsion}Let $\Gamma$ be a finitely presented group. Let
$(\Gamma_{n})$ be a Farber sequence of subgroups in $\Gamma$ such that the
corresponding sofic approximation of $\Gamma$ has subexponential distortion.
Then the homology torsion growth
\[
\rm{Trs}(\Gamma,(\Gamma_{n}))=0.
\]

\end{theorem}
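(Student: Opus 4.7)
The plan is to use the subexponential distortion hypothesis to construct, for every $\varepsilon>0$ and all sufficiently large $n$, an economical presentation of $\Gamma_n$ with few generators and relators of controlled length, and then bound the first homology torsion via Hadamard's inequality applied to the resulting presentation matrix.

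First I would fix a finite presentation $\Gamma=\langle S\mid R\rangle$ with $\max_{r\in R}|r|\le\rho$, and set $G_n=\mathrm{Sch}(\Gamma/\Gamma_n,S)$ with $v_n=[\Gamma:\Gamma_n]$. The Farber hypothesis makes $(G_n)$ a sofic approximation of $\Gamma$, and the subexponential distortion assumption produces, for each $\varepsilon>0$ and all $n$ large, a rewiring $H_n$ of edge density at most $1+\varepsilon$ whose bi-Lipschitz distortion $L_\varepsilon$ satisfies $\ln L_\varepsilon=o(1/\varepsilon)$. I would then label each edge of $H_n$ by a word in $S$ of length at most $L_\varepsilon$ representing it in the Schreier groupoid (using that adjacent $H_n$-vertices are at $G_n$-distance at most $L_\varepsilon$); the reverse bi-Lipschitz bound forces these labels to generate the full groupoid of the action. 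Selecting a spanning tree of $H_n$ (connected because $G_n$ is), the at most $\varepsilon v_n+1$ non-tree edges give a generating set $T_n$ of $\Gamma_n$, and each of the $v_n|R|$ coset-lifts of a relator in $R$, originally a loop of length $\rho$ in $G_n$, is converted by the edge substitution into a loop of length at most $\rho L_\varepsilon$ in $H_n$, and hence into a word of length at most $\rho L_\varepsilon$ in $T_n$. This yields a presentation of $\Gamma_n$ with $g\le\varepsilon v_n+1$ generators and $v_n|R|$ short relators.

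Upon abelianization, $H_1(\Gamma_n;\mathbb{Z})\cong\mathbb{Z}^g/M\mathbb{Z}^{v_n|R|}$ for an integer matrix $M$ whose columns have $\ell_2$-norm at most $\rho L_\varepsilon$. Since the torsion order equals the greatest common divisor of the maximal non-zero minors of $M$, Hadamard's inequality gives $|\mathrm{trs}(\Gamma_n)|\le(\rho L_\varepsilon)^g$, so
\[
\limsup_{n\to\infty}\frac{\ln|\mathrm{trs}(\Gamma_n)|}{v_n}\le\varepsilon\ln(\rho L_\varepsilon).
\]
Sending $\varepsilon\to 0$ and invoking $\ln L_\varepsilon=o(1/\varepsilon)$ will make the right-hand side vanish, which, together with the nonnegativity of the left-hand side, forces $\mathrm{Trs}(\Gamma,(\Gamma_n))=0$.

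The main obstacle I expect is the bookkeeping that promotes the abstract graph-theoretic rewiring $H_n$ to a genuine algebraic presentation of $\Gamma_n$: one has to verify that the labeled non-tree edges, together with the rewritten coset-lifts of $R$, really form a presentation and not merely a generating set. The measured-groupoid language of \cite{miknik} is the natural framework in which to carry this out, and some further care is needed to guarantee the connectedness of $H_n$ and to select a non-zero minor of full rank for the Hadamard estimate.
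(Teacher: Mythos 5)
Your overall architecture coincides with the paper's: rewire the Schreier graph cheaply, convert the rewiring into an economical presentation of $\Gamma_n$ with relators of controlled length, and bound the torsion by a Hadamard/minors estimate (the paper's Lemma \ref{torlemma}). However, the step you describe as ``the reverse bi-Lipschitz bound forces these labels to generate the full groupoid of the action'' is precisely where the argument breaks, and you have not supplied the idea that repairs it. If $f=(x,y)$ is an edge of $H_n$ and you label it by the product $\phi(f)$ of Schreier labels along a short $G_n$-path from $x$ to $y$, then for a Schreier edge $e=(x,xs)$ the short $H_n$-path from $x$ to $xs$ has label product $h$ satisfying $xh=xs$, but in general $h\neq s$: the discrepancy $g(e)=s\,h^{-1}$ is a nontrivial element of word length at most $D^2+1$ lying in the stabilizer of the coset $x$. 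So the labelled edges of $H_n$ generate only a proper subgroupoid, and consequently your $T_n$ need not generate $\Gamma_n$; the ``rewritten relators'' are likewise not relations among your generators (their label products differ from $1$ by products of conjugates of the $g(e)$). The paper's Lemma \ref{lenyeg} fixes this by adjoining the correction set $I=\{(x,g(e))\}$ of stabilizer loops to the generating set, and the point of the Farber hypothesis --- which you use only to get soficity --- is the quantitative statement that $\widetilde\mu(I)\le\sum_{g\in S^{D^2+1}\setminus\{1\}}\mathcal{P}(g\in K_n)\to 0$, so the generator count becomes $(\varepsilon+\gamma_n)v_n$ rather than $\varepsilon v_n$, which still suffices.

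Two smaller points in the same vein. First, your list of relators (the rewritten coset-lifts of $R$ alone) would not present $\Gamma_n$ even after the correction above: one also needs the relations expressing each original Schreier edge through the new edges (the ``type II'' discs of the paper's rewired complex, Proposition \ref{haha}); omitting relators is not harmless here, since a surjection of finitely generated abelian groups does not bound the torsion of the target by that of the source. Fortunately these extra relators also have length polynomial in the distortion and do not enter the Hadamard bound except through their length. Second, the relator lengths come out as a polynomial in the distortion (the paper gets $4bd_r^4$ rather than $\rho L_\varepsilon$, because the relevant distance between the old and corrected new generating sets is $d_r^2+1$); this does not affect the subexponential conclusion. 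You correctly identify the presentation-verification as the main obstacle and point to the measured-groupoid framework, but the proof is incomplete without the correction set $I$ and the quantitative use of the Farber condition that controls it.
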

Looking at the rank of a subgroup through
combinatorial cost (using the cost of the measured groupoid associated to an
action) seems to be the right way to define generating sets of the stabilizer
of `low complexity'. We expect more applications of this point of view.
\bigskip


\subsection{Rigidity}
The final ingredient of the proof of Theorem \ref{main1} as well as of Theorem \ref{main2} comes from
\cite{Samurais}. The following is implicitly proved there, using the notion
of invariant random subgroups.

\begin{theorem}\label{thm:samurais}
Let $\Gamma$ be a lattice in a higher rank simple Lie group. Then any sequence
$\Gamma_{n}$ of finite index subgroups with $\left\vert \Gamma:\Gamma
_{n}\right\vert \rightarrow\infty$ is Farber.
\end{theorem}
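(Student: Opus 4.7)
The plan is to reduce the statement to the rigidity of invariant random subgroups in higher rank and invoke Stuck--Zimmer (together with Nevo--Stuck--Zimmer). First I would promote each $\Gamma_n$ to an invariant random subgroup $\nu_n$ of the ambient Lie group $G$: since $\Gamma$ has finite covolume in $G$ and $\Gamma_n$ has finite index in $\Gamma$, the normalizer $N_G(\Gamma_n)$ has finite covolume in $G$, so I can define $\nu_n$ as the push-forward of the normalized Haar measure on $G/N_G(\Gamma_n)$ under $gN_G(\Gamma_n)\mapsto g\Gamma_n g^{-1}$. Each $\nu_n$ is a conjugation-invariant probability measure on the Chabauty space $\mathrm{Sub}(G)$, concentrated on discrete subgroups of covolume $\mathrm{covol}(\Gamma_n)$. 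Farberness of $(\Gamma_n)$ in $\Gamma$ is equivalent to $\nu_n\to\delta_{\{e\}}$ in the Benjamini--Schramm (weak-$\ast$) topology, so it is enough to prove this convergence of IRSs of $G$.

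The space $\mathrm{IRS}(G)$ is weak-$\ast$ compact, so I would extract an arbitrary subsequential limit $\nu$ and show $\nu=\delta_{\{e\}}$. Decompose $\nu$ into ergodic components and apply the rigidity theorem for ergodic IRSs of a higher rank simple Lie group: each ergodic component is either a Dirac mass on a normal subgroup of $G$ (for us, the finite center $Z(G)$ or $G$ itself, since $G$ is simple), or it is supported on the $G$-conjugacy class of a single lattice in $G$. The goal is then to exclude every non-trivial alternative using $\mathrm{covol}(\Gamma_n)\to\infty$.

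The mass at $\delta_G$ is ruled out because each $\nu_n$ is supported on discrete subgroups, and discrete subgroups stay Chabauty-bounded away from $G$ (pick any relatively compact identity neighborhood $U\subset G$; then $H\in\mathrm{supp}(\nu_n)$ implies $|H\cap U|\le C_U$, whereas a small Chabauty neighborhood of $G$ forces arbitrarily many elements in $U$). To exclude a lattice component supported on conjugates of a fixed lattice $\Lambda$ of covolume $V<\infty$, I would use lower semicontinuity of covolume on $\mathrm{Sub}(G)$ together with the fact that near such a $\Lambda$ the covolume is locally controlled, so positive limiting mass in a Chabauty neighborhood of $\Lambda$ would force $\mathrm{covol}(\Gamma_n)$ to remain bounded, contradicting the hypothesis. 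The only possibilities left are masses on $\{e\}$ and on the finite center $Z(G)$, and passing back to IRSs of the \emph{discrete} group $\Gamma$ collapses $Z(G)\cap\Gamma$ into an atom at a finite central subgroup, which does not affect BS-convergence to $\mu_1$ on $\Gamma$.

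The main obstacle I anticipate is the quantitative control in the last step: one needs a careful semicontinuity argument for covolume (or, equivalently, a lower bound on the injectivity radius that dominates at Chabauty-close points to a fixed lattice). This is exactly where the work in \cite{Samurais} goes; in our setup the whole Stuck--Zimmer/Nevo--Zimmer machinery is used as a black box and the contribution is simply to identify the normalizer-of-$\Gamma_n$ construction that makes the $\nu_n$ into genuine IRSs of $G$.
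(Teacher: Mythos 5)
Your overall architecture coincides with the paper's: promote each $\Gamma_n$ to an IRS $\nu_n=\mu_{\Gamma_n}^G$ of $G$ by taking a uniform random conjugate, show $\nu_n\to\delta_{\{e\}}$ using higher-rank rigidity, and transfer the conclusion back to $\Gamma$. Your middle step --- extract a subsequential limit, decompose it ergodically, invoke Nevo--Stuck--Zimmer to see that each ergodic component is $\delta_{\{e\}}$, $\delta_G$ or $\mu_\Lambda$ for a lattice $\Lambda$, and exclude the last two via discreteness and semicontinuity of covolume --- is essentially a reconstruction of \cite[Theorem 4.2]{Samurais}, which the paper simply cites as a black box, so there is no objection there.

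The genuine gap is the sentence ``Farberness of $(\Gamma_n)$ in $\Gamma$ is equivalent to $\nu_n\to\delta_{\{e\}}$,'' which you treat as a definition-chase. The implication you actually need --- $\nu_n\to\delta_{\{e\}}$ in $\mathrm{Sub}_G$ forces $\mu_{\Gamma_n}\to\mu_{1_\Gamma}$ in $\mathrm{Sub}_\Gamma$ --- is precisely the paper's Lemma \ref{induced}, and it is not formal. Fix a non-central $\gamma\in\Gamma\setminus\{1\}$; Farberness requires $\mathcal{P}(\gamma\in K_n)\to 0$, where $K_n$ is a uniform random $\Gamma$-conjugate of $\Gamma_n$. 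But the induced IRS $\nu_n$ assigns the event $\{H:\gamma\in H\}$ measure \emph{zero} for every $n$, since $\{g\in G: g^{-1}\gamma g\in\Gamma_n\}$ is a countable union of cosets of the Haar-null subgroup $Z_G(\gamma)$; so convergence of $\nu_n$ says nothing directly about $\mathcal{P}(\gamma\in K_n)$. The correct test set is the conjugation-invariant $\Theta(\gamma)=\{H\in\mathrm{Sub}_G:\gamma^G\cap H\neq\emptyset\}$, which does satisfy $\nu_n(\Theta(\gamma))\geq \mathcal{P}(\gamma\in K_n)$; however, $\Theta(\gamma)$ is closed with $\langle 1\rangle\notin\overline{\Theta(\gamma)}$ (so that the portmanteau argument applies) only when the conjugacy class $\gamma^G$ is closed, i.e.\ when $\gamma$ is semisimple. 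For unipotent $\gamma$ one has $1\in\overline{\gamma^G}$ and the argument collapses. The paper repairs this by reducing to semisimple elements: any limit IRS of $\Gamma$ without an atom at the trivial subgroup is supported on Zariski-dense subgroups (\cite[Theorem 2.6]{Samurais}), so it suffices to test semisimple $\gamma$. This reduction is the one idea your proposal is missing, and without it the passage from the $G$-level convergence to the Farber property of $(\Gamma_n)$ in $\Gamma$ does not close. (An alternative repair, not taken by the paper, would be to run the Stuck--Zimmer classification directly on IRSs of the lattice $\Gamma$ itself and avoid the induction to $G$ altogether.)
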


The vanishing of the rank gradient, Theorem \ref{main1}, now follows from the combination of Theorem \ref{thm:samurais}
together with the soft form of Theorem \ref{ccsofapprox} and Theorem
\ref{rgcc}. The vanishing of the homology torsion growth, Theorem \ref{main2}, follows from Theorem \ref{thm:samurais} together with the
effective form of Theorem \ref{ccsofapprox}, which gives polynomial
distortion, together with Theorem \ref{subexptorsion}. 

\subsection{Algebraic groups over non-Archimedean local fields}
Our methods and ideas can also be applied to lattices in higher rank algebraic groups over local fields. We demonstrate this by proving the following:

\begin{theorem} \label{p} Let $\mathbb F$ be a finite field and let $\Gamma= SL(n, \mathbb F[t])$ where $n>2$ and $\mathbb F[t]$ is the ring of polynomials over $\mathbb F$. Then
$\lim_{i \rightarrow \infty} r(\Gamma, \Gamma_i)=0$ for every sequence of subgroups $\Gamma_i$ of $\Gamma$ with  $|\Gamma : \Gamma_i| \rightarrow \infty$.
\end{theorem}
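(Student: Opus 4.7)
The plan is to follow the three-step strategy underlying Theorem \ref{main1}, adapted to the non-Archimedean setting. The group $\Gamma = SL(n,\mathbb{F}[t])$ is a non-uniform lattice in $G = SL(n,\mathbb{F}((1/t)))$, which is simple of local rank $n-1 \geq 2$, so one expects all three ingredients (right-angledness, rigidity of invariant random subgroups, combinatorial cost) to transfer from the real higher rank case.

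First I would exhibit a right-angled generating set of $\Gamma$. The obstacle in positive characteristic is that the usual elementary matrices $E_{ij}(f)$ have additive order $p = \mathrm{char}(\mathbb{F})$ and cannot serve as infinite-order generators. My replacement is a hyperbolic matrix such as $h = \bigl(\begin{smallmatrix} 0 & 1 \\ -1 & t \end{smallmatrix}\bigr) \in SL(2,\mathbb{F}[t])$, which has infinite order since it translates along an axis of the Bruhat-Tits tree of $SL(2,\mathbb{F}((1/t)))$. Conjugates of $h$ placed in the various $2\times2$ block subgroups of $\Gamma$ give a supply of infinite-order elements, and two such elements commute whenever their block supports are disjoint. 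I would then enumerate a generating collection of these block-hyperbolic elements, interleaving commuting bridge elements (supported in blocks disjoint from both neighbors) whenever two successive entries would otherwise fail to commute, so that the consecutive-commuting condition of the definition is met. That the resulting list still generates $\Gamma$ should follow from Suslin-type generation results for $SL_n$ over $\mathbb{F}[t]$ in terms of its rank-two subgroups.

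Second, I would show that every sequence $(\Gamma_i)$ with $[\Gamma:\Gamma_i] \to \infty$ is Farber. This is the non-Archimedean analogue of Theorem \ref{thm:samurais}, and follows from Stuck-Zimmer style rigidity of invariant random subgroups for higher rank $S$-arithmetic lattices, applied to $\Gamma \leq G$: any non-atomic IRS limit would come from an intermediate subgroup forbidden by higher rank rigidity, so $\mu_{\Gamma_i} \to \delta_{\{e\}}$.

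Finally, with both ingredients in place, the Schreier graph sequence $\mathrm{Sch}(\Gamma/\Gamma_i, S)$ is a sofic approximation of a right-angled group, so by Theorem \ref{ccsofapprox} it has combinatorial cost $1$. Theorem \ref{rgcc} then gives $\mathrm{RG}(\Gamma,(\Gamma_i)) \leq 0$ along any subsequence for which the rank gradient exists, and together with the trivial bound $r(\Gamma,\Gamma_i) \geq 0$ this forces $\lim_i r(\Gamma,\Gamma_i) = 0$. The hard part will be the first step: in characteristic $p>0$ the standard unipotent generators have order $p$, so exhibiting a right-angled generating list forces one to replace them by hyperbolic elements in $SL_2$-blocks and thread them carefully through bridges while preserving generation; for small $n$ (especially $n = 3$) the bridge construction becomes tight and may require a more delicate choice of generators.
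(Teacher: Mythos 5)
Your steps (2) and (3) — the Farber property via the Gelander--Levit / Levit extension of Nevo--Stuck--Zimmer to non-Archimedean local fields, and the passage through combinatorial cost via Theorems \ref{ccsofapprox} and \ref{rgcc} (after passing to a subsequence where the rank gradient converges) — are exactly the paper's argument. The gap is in step (1), and it is genuine. Your block-hyperbolic construction cannot even get started for $n=3$: two elements supported on $2\times 2$ blocks commute (for the reason you give) only when the blocks are disjoint, and in $SL_3$ no two $2\times 2$ blocks are disjoint, so there is no pair of consecutive commuting generators, let alone a bridge. For general $n$ the generation claim is also unsubstantiated: Suslin's theorem gives generation by \emph{elementary} matrices, not by finitely many hyperbolic elements sitting in $2\times 2$ blocks, and one cannot fall back on taking whole $SL_2$-blocks since $SL_2(\mathbb F[t])$ is not finitely generated (Nagao). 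Carrying out a normal-closure-plus-bridges argument as in Section \ref{sec:coco} would require substantial extra work and still would not cover small $n$.

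The paper's resolution is the observation you talked yourself out of: one should not insist on infinite-\emph{order} elements, but rather on infinite \emph{subgroups}. The elementary subgroups $E_{i,j}=\{I+f e_{ij}: f\in\mathbb F[t]\}\cong(\mathbb F[t],+)$ are infinite (though every element has order $p$), they generate $\Gamma$, the subgroups $E_{i,j}$ and $E_{k,l}$ commute whenever $j\neq k$ and $i\neq l$, and already for $n=3$ the six of them admit an ordering with consecutive ones commuting (e.g. $E_{13},E_{12},E_{32},E_{31},E_{21},E_{23}$). The proof of Theorem \ref{rightangcc} only uses infiniteness of the order of $s_i$ to guarantee that the $s_i$-orbits in the sofic approximation are long, so that an $R$-separated set has density at most $1/R$; this works verbatim with the orbits of an infinite subgroup $A_i$ in place of the cycles of $\langle s_i\rangle$. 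With that (mild, but necessary) generalization of ``right angled'' in hand, the theorem follows as you outline. As written, your proof does not establish the right-angled structure and therefore does not close.
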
 

What makes it possible to apply the rigidity part of the argument is the recent extension of Stuck--Zimmer theorem for the context that was proved by A. Levit \cite{Arie} and the analysis of invariant random subgroups in non-archimedean groups that was carried out in \cite{GL}.

\bigskip

The structure of the paper is as follows. In Section \ref{irs} we recall the notion of invariant random subgroups from \cite{Samurais} and deduce Theorem \ref{thm:samurais}. Section \ref{soficap} covers defintion and basic properties of sofic aproximations. The notion of combinatorial cost  and its relation to rank gradient is discussed in Section
\ref{Section:CombCost} where we prove Theorem \ref{rgcc}. Section \ref{torsion} applies these results to torsion homology growth and proves Theorem \ref{tortheorem}. We also obtain some explicit bounds on the growth in case when $\Gamma$ is an arithmetic group. In Section \ref{sec:coco} we construct the right angled lattices, proving Theorems \ref{coco} and \ref{SO(7,2)}. Section \ref{sec:local} discusses the case of lattices over non-archimedean local fields and proves Theorem \ref{p}. In the final Section \ref{sec:open} we list open problems and suggest further directions for investigation motivated by our results.

\section{The role of IRS and rigidity.} \label{irs}

Recall that an IRS on $\Gamma$ is a conjugacy invariant probability measure on the space $\text{Sub}_\Gamma$ of subgroups of $\Gamma$ equipped with the Chabauty topology. See \cite{abglvi} where the notion was introduced. 
We denote by $\rm{IRS}(\Gamma)$ the set of all IRS on $\Gamma$. This is a compact space under the weak$^*$ topology.
Every subgroup $\Delta$ whose normalizer is of finite index in $\Gamma$ induces an IRS, denoted $\mu_\Delta$, which is the uniform measure on the conjugacy class of $\Delta$. 

We say that a chain $(\Gamma_{n})$ in $\Gamma$ is \emph{Farber}, if the action
of $\Gamma$ on the boundary of its coset tree $T=T(\Gamma,(\Gamma_{n}))$ is
essentially free, that is, if almost every element of $\partial T$ has trivial
stabilizer in $\Gamma$. This is the case e.g., when the chain is normal with
trivial intersection. More generally:

\begin{lemma}
Let $(\Gamma_{n})$ be a chain of finite index subgroups of $\Gamma$. Then $(\Gamma_{n})$ is Farber iff 
$\mu_{\Gamma_n}\to\mu_{1_\Gamma}$ in the weak star topology on $\text{Prob}(\text{Sub}_\Gamma)$. 
\end{lemma}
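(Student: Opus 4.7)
The plan is to realise both $\mu_{\Gamma_n}$ and a natural candidate limit as push-forwards of level measures on the coset tree $T=T(\Gamma,(\Gamma_n))$. At level $n$ the vertex $g\Gamma_n$ has $\Gamma$-stabiliser $g\Gamma_n g^{-1}$, so the stabiliser map $\sigma_n\colon \Gamma/\Gamma_n\to\mathrm{Sub}_\Gamma$, $g\Gamma_n\mapsto g\Gamma_n g^{-1}$, is $\Gamma$-equivariant with image the conjugacy class of $\Gamma_n$. Pushing the uniform probability measure on $\Gamma/\Gamma_n$ forward along $\sigma_n$ yields a $\Gamma$-invariant probability supported on that finite conjugacy class, hence it is $\mu_{\Gamma_n}$; a quick fibre count confirms that each conjugate $g\Gamma_ng^{-1}$ is attained by exactly $[N_\Gamma(\Gamma_n):\Gamma_n]$ cosets, delivering the uniform weight.

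Equip $\partial T$ with its $\Gamma$-invariant probability measure $\nu$ (the inverse limit of the uniform level measures), and let $\sigma_\infty\colon\partial T\to\mathrm{Sub}_\Gamma$ send a ray $\xi$ with defining cosets $(\xi_n)$ to $\mathrm{Stab}_\Gamma(\xi)=\bigcap_n\sigma_n(\xi_n)$. Set $\mu_\infty=(\sigma_\infty)_*\nu$, an IRS on $\Gamma$. With $\pi_n\colon\partial T\to\Gamma/\Gamma_n$ denoting the level-$n$ projection, the previous paragraph gives $\mu_{\Gamma_n}=(\sigma_n\circ\pi_n)_*\nu$.

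The main step is the weak-$*$ convergence $\mu_{\Gamma_n}\to\mu_\infty$. The Chabauty topology on $\mathrm{Sub}_\Gamma$ is generated by the clopen cylinders $U_{F_+,F_-}=\{H : F_+\subseteq H,\; F_-\cap H=\emptyset\}$ with $F_\pm\subset\Gamma$ finite, and a sequence $H_m\to H$ in Chabauty iff $H_m\cap F=H\cap F$ eventually for every finite $F\subset\Gamma$. The chain hypothesis forces $\sigma_n(\xi_n)$ to be decreasing in $n$ along every boundary ray, so for any fixed finite $F$ and any $g\in F$ the condition $g\in\sigma_n(\xi_n)$ agrees with $g\in\sigma_\infty(\xi)$ for all $n$ large enough. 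Thus $(\sigma_n\circ\pi_n)(\xi)\to\sigma_\infty(\xi)$ in Chabauty for every $\xi\in\partial T$, and bounded convergence applied to any $f\in C(\mathrm{Sub}_\Gamma)$ delivers $\int f\,d\mu_{\Gamma_n}\to\int f\,d\mu_\infty$.

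To finish, observe that $\mu_\infty=\mu_{1_\Gamma}$ iff $\sigma_\infty(\xi)=\{1\}$ for $\nu$-a.e.\ $\xi$, which is precisely the Farber condition for the action of $\Gamma$ on $\partial T$. Combined with the established convergence $\mu_{\Gamma_n}\to\mu_\infty$, this yields the equivalence. No serious obstacle arises: the only step that requires a moment's care is the pointwise Chabauty convergence, which becomes trivial once one exploits the monotonicity of the stabilisers $\sigma_n(\xi_n)$ along a chain. Without the chain assumption this monotonicity is lost and a more refined argument would be needed; that is the only place where the chain hypothesis is essential.
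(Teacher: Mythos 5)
Your proof is correct. It is organized differently from the paper's: you construct the limit object explicitly, namely the stabilizer IRS $\mu_\infty=(\sigma_\infty)_*\nu$ of the boundary action, prove unconditionally that $\mu_{\Gamma_n}\to\mu_\infty$ via pointwise Chabauty convergence of $\sigma_n\circ\pi_n$ to $\sigma_\infty$ plus bounded convergence, and then identify the Farber condition with $\mu_\infty=\mu_{1_\Gamma}$ (the converse direction implicitly using uniqueness of weak-$*$ limits, which is fine since $\mathrm{Sub}_\Gamma$ is compact metrizable). The paper instead tests weak-$*$ convergence element by element: by countability of $\Gamma$, both essential freeness of the boundary action and convergence $\mu_{\Gamma_n}\to\mu_{1_\Gamma}$ reduce to the single condition that for each $g\neq 1$ the proportion of level-$n$ vertices fixed by $g$, which equals $\mathcal{P}(g\in K_n)$, tends to zero. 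The underlying mechanism is the same in both arguments -- the monotone decrease of stabilizers along rays of the coset tree, which is exactly where the chain hypothesis enters -- but your packaging buys a slightly stronger conclusion (for any chain the IRSs $\mu_{\Gamma_n}$ always converge, to the stabilizer IRS of the boundary action), while the paper's is shorter because it never needs to introduce $\mu_\infty$ or invoke dominated convergence.
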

Thus the notion of IRS $\mu_n$ converging to the trivial IRS $\mu_{1_\Gamma}$ is a generalization of a Farber chain.

\begin{proof} Since $\Gamma$ is countable the chain $(\Gamma_n)$ is Farber if and only if any $g \in \Gamma \backslash \{1\}$ fixes a set of measure zero in $\partial T(\Gamma, (\Gamma_n))$ i.e. \[  \frac{| \{ \Gamma_na \ | \ a\in \Gamma, \ g \in (
\Gamma_n)^a \}|}{|\Gamma :\Gamma_n|} \rightarrow 0 \textrm{ with }n \rightarrow \infty. \] We can interpret the fraction on the left hand side as the probability that the element $g$ lies in a $\mu_{\Gamma_n}$-random subgroup from $\rm{Sub}_\Gamma$. Again countability of $\Gamma$ gives that this condition is equivalent to $\mu_{\Gamma_n} \rightarrow \mu_{1_\Gamma}$ in the weak$^*$ topology.
\end{proof}

\bigskip

The following result is implicit in \cite{Samurais}.

\begin{theorem}\label{mu_n}
Let $G$ be a higher rank simple Lie group and $\Gamma\le G$ a lattice. Let $\Gamma_n\le \Gamma$ be a sequence of finite index subgroups with $|\Gamma:\Gamma_n|\to\infty$. Then the sequence of IRS's $\mu_{\Gamma_n}$ converges to $\mu_{1_\Gamma}$ in $\text{Prob}(\text{Sub}_\Gamma)$.
\end{theorem}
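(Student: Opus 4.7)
The plan is to lift the question from $\text{IRS}(\Gamma)$ to $\text{IRS}(G)$, where one can invoke the Benjamini--Schramm convergence arguments from \cite{Samurais}. By compactness of $\text{Prob}(\text{Sub}_\Gamma)$ in the weak-$*$ topology, it suffices to show that every subsequential limit of $(\mu_{\Gamma_n})$ equals $\mu_{1_\Gamma}$; so fix a subsequence with $\mu_{\Gamma_{n_k}}\to\mu$ and aim to prove $\mu=\mu_{1_\Gamma}$.

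The essential tool is the induction map $\text{IRS}(\Gamma)\to\text{IRS}(G)$, $\nu\mapsto\hat{\nu}$, defined by pushing forward the product of the normalized Haar measure on $G/\Gamma$ with $\nu$ under the well-defined $G$-equivariant map $(g\Gamma, H)\mapsto gHg^{-1}\in\text{Sub}(G)$. This map is weak-$*$ continuous, it sends $\mu_{\Gamma_n}$ to the $G$-invariant IRS corresponding to $\Gamma_n$ regarded as a lattice in $G$, it sends $\mu_{1_\Gamma}$ to $\delta_{\{1\}}$, and it is injective (a preimage can be recovered by a fundamental-domain argument, intersecting each sampled subgroup back with $\Gamma$). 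So by continuity $\hat{\mu}_{\Gamma_{n_k}}\to\hat{\mu}$, and it is enough to show $\hat{\mu}=\delta_{\{1\}}$ in $\text{IRS}(G)$.

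At this point we are in the setting of \cite{Samurais}. Since $\mathrm{vol}(G/\Gamma_n)=[\Gamma:\Gamma_n]\cdot\mathrm{vol}(G/\Gamma)\to\infty$, and since $G$ is simple of higher real rank, the main result proved there yields $\hat{\mu}_{\Gamma_n}\to\delta_{\{1\}}$ in $\text{IRS}(G)$; hence $\hat{\mu}=\delta_{\{1\}}$, and by injectivity of induction $\mu=\mu_{1_\Gamma}$ as desired.

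The main obstacle is precisely this input from \cite{Samurais}. Its proof combines the Stuck--Zimmer theorem --- which classifies ergodic IRS on a higher rank simple Lie group $G$ as either supported on $\{1\}\cup Z(G)$ or arising from a conjugacy class of lattices --- with the observation that a weak-$*$ limit point with positive mass on the lattice component would force bounded covolume along a subsequence, using that families of lattices in $G$ with bounded covolume form a uniformly discrete collection of points in the Chabauty space. This contradicts $\mathrm{vol}(G/\Gamma_n)\to\infty$ and thus pins all limiting mass on $\{1\}$ (modulo the finite center, which is separately handled). Once this rigidity input is in hand, the induction--restriction formalism assembles the pieces to yield convergence of $\mu_{\Gamma_n}$ in $\text{IRS}(\Gamma)$.
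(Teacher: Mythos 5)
Your proposal is correct and follows the same skeleton as the paper: induce the IRSs from $\Gamma$ up to $G$, quote \cite[Theorem 4.2]{Samurais} for the convergence $\mu_{\Gamma_n}^G\to\mu_{1_G}$, and transfer the conclusion back down to $\Gamma$. The difference lies entirely in how the transfer step is done, and your route is genuinely different from (and arguably cleaner than) the paper's. The paper isolates the needed implication as one direction of Lemma \ref{induced} and proves it by contradiction: if $\mu_{\Gamma_n}\not\to\mu_{1_\Gamma}$ then, using that every IRS with no atom at the trivial subgroup is supported on Zariski dense subgroups, some \emph{semisimple} $\gamma\neq 1$ has non-vanishing containment probabilities; closedness of the conjugacy class $\gamma^G$ makes $\Theta(\gamma)=\{H:\gamma^G\cap H\neq\emptyset\}$ compact in $\text{Sub}_G$, and a Urysohn function supported near $\Theta(\gamma)$ and vanishing at $\langle 1\rangle$ witnesses $\mu_{\Gamma_n}^G\not\to\mu_{1_G}$. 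You instead use compactness of $\text{Prob}(\text{Sub}_\Gamma)$, weak-$*$ continuity of induction (which follows by dominated convergence over the finite-volume fundamental domain --- essentially the computation the paper performs for the \emph{other}, unused direction of its Lemma \ref{induced}), and the observation that the only IRS of $\Gamma$ inducing $\delta_{\{1\}}$ is $\delta_{\{1\}}$. This buys you an argument that avoids the algebraic-group input (Zariski density of IRS, closedness of semisimple conjugacy classes) altogether. Two small imprecisions to fix: the map $(g\Gamma,H)\mapsto gHg^{-1}$ is not well defined on $G/\Gamma\times\text{Sub}_\Gamma$ since it depends on the coset representative; only the pushforward of $\mathrm{Haar}\times\nu$ is well defined, thanks to the $\Gamma$-conjugation invariance of $\nu$ (the paper makes exactly this remark). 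And you do not need full injectivity of induction --- whose justification ``by intersecting back with $\Gamma$'' is shaky, since $gHg^{-1}\cap\Gamma$ need not be $\Gamma$-conjugate to $H$ --- but only the trivial-fiber statement that $\hat\nu=\delta_{\{1\}}$ forces $\nu(\{\langle 1\rangle\})=\hat\nu(\{\langle 1\rangle\})=1$, which is immediate.
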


More precisely by \cite[Theorem 4.2]{Samurais} the IRS $\mu_{\Gamma_n}^G$ in $G$  corresponding to the lattices $\Gamma_n\le G$ converge to $\mu_{1_G}$ as measures on $\text{Sub}_G$. However $\mu_{\Gamma_n}^G$ are induced from $\mu_{\Gamma_n}$ (see \cite{Samurais} for details about inducing IRS from a lattice to the ambient group). Hence Theorem \ref{mu_n} is a consequence of:

\begin{lemma} \label{induced}
Let $G$ and $\Gamma\le G$ be as above. Let $\mu_n$ be IRSs on $\Gamma$ and denote by $\mu_n^G=\text{Ind}_\Gamma^G(\mu_n)$ the induced IRSs on $G$. Then $\mu_n^G$ converges to  $\mu_{1_G}$ if and only if $\mu_n$ converges to $\mu_{1_\Gamma}$.
\end{lemma}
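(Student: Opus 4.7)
The plan is to unpack the definition of the induced IRS via a Borel fundamental domain $F\subset G$ for $\Gamma$. Writing $c_g$ for conjugation by $g$ and regarding each $\mu_n$ as a measure on $\text{Sub}_G$ (via the inclusion $\text{Sub}_\Gamma\hookrightarrow \text{Sub}_G$), one has
\[
\mu_n^G \;=\; \frac{1}{m(F)}\int_F (c_g)_* \mu_n\, dm(g).
\]
Both directions of the equivalence will follow by translating weak$^*$ convergence into statements about explicit cylinder events and relating them through this integral.

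First I would note that $\mu_n\to \mu_{1_\Gamma}$ is equivalent to $\mu_n(\{L:\gamma\in L\})\to 0$ for every $\gamma\in\Gamma\setminus\{1\}$, since these sets are clopen in the Chabauty topology on $\text{Sub}_\Gamma$ (the argument of the previous lemma already exploits this). The analogue in $G$ is that $\mu_n^G(\{H:H\cap K\neq\emptyset\})\to 0$ for every compact $K\subset G\setminus\{1\}$, since such sets form a neighborhood basis of $\{1_G\}$ in $\text{Sub}_G$.

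For the forward implication, given compact $K\subset G\setminus\{1\}$, I would write
\[
\mu_n^G\bigl(\{H:H\cap K\neq\emptyset\}\bigr) \;=\; \frac{1}{m(F)}\int_F \mu_n\bigl(\{L:L\cap g^{-1}Kg\neq\emptyset\}\bigr)\, dm(g).
\]
For each $g\in F$, the set $g^{-1}Kg\cap\Gamma$ is finite (by discreteness) and avoids $1$, so the integrand tends to $0$ pointwise by hypothesis; dominated convergence together with $m(F)<\infty$ finishes this direction.

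For the reverse implication, fix $\gamma\in\Gamma\setminus\{1\}$. I would choose a compact neighborhood $K$ of $\gamma$ in $G\setminus\{1\}$ with $K\cap\Gamma=\{\gamma\}$, and then a neighborhood $W$ of $1_G$ in $G$ such that $\gamma\in g^{-1}Kg$ for every $g\in W$. This yields
\[
\mu_n^G\bigl(\{H:H\cap K\neq\emptyset\}\bigr) \;\ge\; \frac{m(W\cap F)}{m(F)}\,\mu_n\bigl(\{L:\gamma\in L\}\bigr),
\]
so $\mu_n(\{L:\gamma\in L\})\to 0$. The one point needing care is arranging $m(W\cap F)>0$, which is automatic once $F$ is chosen to contain a neighborhood of $1_G$; this can always be done. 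Beyond such bookkeeping, both directions reduce to dominated convergence once the right cylinder events are identified, so I do not foresee a substantive obstacle.
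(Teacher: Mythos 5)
Your argument is correct, and for the implication $\mu_n^G\to\mu_{1_G}\Rightarrow\mu_n\to\mu_{1_\Gamma}$ it takes a genuinely different route from the paper's. The paper first reduces to \emph{semisimple} elements $\gamma$, using the fact (quoted from \cite[Theorem 2.6]{Samurais}) that an IRS with no atom at the trivial subgroup is supported on Zariski dense subgroups; it then exploits that the conjugacy class $\gamma^G$ of a semisimple element is closed in order to manufacture a continuous test function $\psi$ equal to $1$ on $\Theta(\gamma)=\{H:\gamma^G\cap H\ne\emptyset\}$ and vanishing at $\langle 1\rangle$, which gives $\int\psi\,d\mu_n^G\ge m_n(\gamma)$. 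Your localization trick --- conjugating only by $g$ in a small identity neighbourhood $W$, so that the relevant conjugates of $\gamma$ all stay inside a single compact set $K$ --- replaces the global geometry of $\gamma^G$ by the closed cylinder event $\{H:H\cap K\ne\emptyset\}$, at the cost of the harmless factor $m(W\cap F)/m(F)$. This requires no semisimplicity and no input from algebraic group theory, and in fact works verbatim for an arbitrary lattice in a locally compact second countable group; it also sidesteps any discussion of whether $\Theta(\gamma)$ is closed in $\mathrm{Sub}_G$. Your other implication is essentially the paper's argument (the integral over a fundamental domain plus dominated convergence), only phrased with indicator events of compact sets rather than continuous functions; the discreteness of $\Gamma$, which makes $g^{-1}Kg\cap\Gamma$ finite so that the integrand is a finite sum of the quantities $m_n(\gamma)$, does the work that the pointwise convergence $\mu_n(f^u)\to 0$ does in the paper. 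The bookkeeping you flag --- measurability of $g\mapsto\mu_n(\{L:L\cap g^{-1}Kg\ne\emptyset\})$, choosing $F$ to contain an identity neighbourhood (legitimate since the induced IRS does not depend on the fundamental domain), and the equivalence of weak$^*$ convergence to $\delta_{\langle 1\rangle}$ with $\mu_n^G(\{H:H\cap K\ne\emptyset\})\to 0$ for all compact $K\subset G\setminus\{1\}$, the complements of such events being a neighbourhood basis of $\langle 1\rangle$ --- is indeed routine.
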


\begin{proof}
For an element $\gamma\in \Gamma$ denote by $m_n(\gamma)$ the $\mu_n$ probability that $\gamma$ belongs to a random subgroup, i.e.
$$
 m_n(\gamma)=\mu_n(\{\Delta\in\text{Sub}_\Gamma:\gamma\in\Delta\}).
$$
Since $\Gamma$ is countable we have that $\mu_n\to\mu_{1_\Gamma}$ iff $m_n(\gamma)\to 0$ for every $\gamma\in\Gamma\setminus\{1\}$. Moreover, since every IRS in $G$ (as well as in $\Gamma$) without an atom on the trivial subgroup is supported on Zariski dense subgroups (see \cite[Theorem 2.6]{Samurais}) we have that $\mu_n\to\mu_{1_\Gamma}$ iff $m_n(\gamma)\to 0$ for every semisimple element $\gamma\in\Gamma\setminus\{1\}$.

Suppose that this is not the case and let $\gamma\in\Gamma\setminus\{1\}$ be a semisimple element such that $m_n(\gamma)$ does not tend to $0$. Since $\gamma$ is semisimple, its conjugacy class $\gamma^G$ is closed in $G$ and hence the set 
$$
 \Theta(\gamma):=\{ H\in \text{Sub}_G: \gamma^G\cap H\ne\emptyset\}
$$
is compact in $\text{Sub}_G$. Let $\psi$ be a continuous non-negative function on $\text{Sub}_G$ equals to $1$ on $\Theta(\gamma)$ and vanishes on $1_G$. Then $\int\psi d{\mu_n^G}\ge m_n(\gamma)$ and hence does not tend to 
$$
 0=\psi(\langle 1_G\rangle)=\int\psi d{\mu_{1_G}},
$$ 
implying that $\mu_n^G$ does not converge to $\mu_{1_G}$.

Let us now prove the converse.  Let $\Omega$ be a fundamental domain for $\Gamma$ in $G$ and suppose that the Haar measure of $G$ is normalized so that $\Omega$ has measure $1$. The definition of induced measure can be expressed by an integral as follows:
$$
 \mu_n^G(f)=\int_{G/\Gamma}\mu_n(f^g|_{\text{Sub}_\Gamma})d(g\Gamma)=\int_{\Omega}\big(\int_{\text{Sub}_\Gamma} f^w(\Delta) d\mu_n(\Delta)\big)dw,
$$  
where $f$ is an arbitrary continuous function on $\text{Sub}_G$ and 
$$
 f^w(H):=f(w^{-1}Hw)~\text{for}~ w\in G,~H\in\text{Sub}_G.
$$ 
Note that although $f^g$ is not well defined for $g\in G/\Gamma$, the quantity $\mu_n(f^g|_{\text{Sub}_\Gamma})$ is well defined since $\mu_n$ is $\Gamma$-conjugacy invariant.

Suppose that $\mu_n^G$ does not converge to $\mu_{1_G}$. Then 
there is a continuous function $f:\text{Sub}_G\to\mathbb{R}^{\ge 0}$ such that $f(\langle 1\rangle)=0$ and $\mu_n^G(f)\nrightarrow 0$. By the dominated convergence theorem it follows that there is a set of positive measure $U\subset \Omega$ such that $\int_{\text{Sub}_\Gamma} f^u(\Delta)d\mu_n(\Delta)$ does not converge to $0$ for every $u\in U$. Picking $u_0\in U$, and regarding $f^{u_0}$ as a function on the subspace $\text{Sub}_\Gamma\subset\text{Sub}_G$ we have that 
$$
\mu_n(f^{u_0})=
 \int_{\text{Sub}_\Gamma} f^{u_0}(\Delta)d\mu_n(\Delta)\nrightarrow 0=f(\langle 1\rangle)=f^{u_0}(\langle 1\rangle)=\mu_{1_{\Gamma}}(f^{u_0}),
$$
implying that $\mu_n\nrightarrow\mu_{1_{\Gamma}}$.

\end{proof}


\section{Sofic approximations} \label{soficap}

A countable group $\Gamma$ is \emph{sofic}, if for every finite $B\subseteq
\Gamma$ and $\varepsilon>0$ there exists a finite set $V$ and a map
$\sigma:\Gamma\rightarrow\mathrm{Sym}(V)$ such that
\begin{equation}
\left\vert \left\{  v\in V\mid v\sigma(a)\neq v,v\sigma(a)\sigma
(b)=v\sigma(ab)\text{ (}a,b\in B,a\neq e\text{)}\right\}  \right\vert
>(1-\varepsilon)\left\vert V\right\vert \tag{Sof}%
\end{equation}
That is, if the pair $(V,\sigma)$ looks like a free action of $\Gamma$ from
most $v\in V$.

The notion was invented by Gromov \cite{gromov} and further clarified by Weiss \cite{weiss} as a common generalization of
amenable groups and residually finite groups. As of now, there is no countable group that is known to be non-sofic. 

A \emph{sofic approximation of }$\Gamma$ is a sequence of pairs $(V_{n}%
,\sigma_{n})$ such that for every finite subset $B$ of $\Gamma$ there exists
$\varepsilon_{n}\rightarrow0$ \ such that (Sof) holds for $V=V_{n}$,
$\sigma=\sigma_{n}$ and $\varepsilon=\varepsilon_{n}$ ($n\geq1$). 

When $\Gamma$ is generated by a finite set $S$, one can visualize a sofic
approximation by drawing the Schreier graphs $\mathrm{Sch}(V_{n},S,\sigma
_{n})$ as follows. The vertex set equals $V_{n}$ and for all $x\in V_{n}$ and
$s\in S$ there is an $s$-labeled edge going from $x$ to $x\sigma_{n}(s)$. Let
$\mathrm{Cay}(\Gamma,S)$ denote the Cayley graph of $\Gamma$ with respect to
$S$. In a graph $G$ let $B_{R}(G,v)$ denote the $R$-ball centered at the
vertex $v$. For two labelled graphs $G$ and $G'$ and vertices $v \in G, v' \in G'$ we write
$B_{R}(G,v)\cong B_{R}(G',v')$ when $B_R(G,v)$ and $B_R(G',v')$ are isomorphic as labelled graphs rooted at $v$ and $v'$ respectively. Applying (Sof) gives the following. 

\begin{lemma}
\label{graf}Let $\Gamma$ be generated by the finite symmetric set and let
$(V_{n},\sigma_{n})$ be a sofic approximation for $\Gamma$. Let $G_{n}%
=\mathrm{Sch}(V_{n},S,\sigma_{n})$ and let $G=\mathrm{Cay}(\Gamma,S)$. Then
for every $R>0$, we have
\[
\lim_{n\rightarrow\infty}\mathcal{P}(B_{R}(G_{n},v)\cong B_{R}(G,e))=1\text{.}
\]
where $v \in V(G_{n})$ is uniform random. 
\end{lemma}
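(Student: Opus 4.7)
The plan is to apply the defining condition (Sof) of the sofic approximation to a finite subset $B$ of $\Gamma$ large enough to witness the combinatorial structure of an $R$-ball. Concretely, I would take $B$ to be the ball of radius $2R$ around $e$ in $G = \mathrm{Cay}(\Gamma,S)$, which is finite since $S$ is finite. By hypothesis there exists $\varepsilon_n \to 0$ such that the set $V_n^{\mathrm{good}}$ of vertices $v \in V_n$ for which both $v\sigma_n(a) \neq v$ and $v\sigma_n(a)\sigma_n(b) = v\sigma_n(ab)$ hold for all $a,b \in B$ with $a \neq e$ has density at least $1 - \varepsilon_n$ in $V_n$.

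It then suffices to show that for every $v \in V_n^{\mathrm{good}}$ one has $B_R(G_n, v) \cong B_R(G, e)$ as rooted $S$-labelled graphs, since this already gives $\mathcal{P}(B_R(G_n,v) \cong B_R(G,e)) \geq 1 - \varepsilon_n$. To construct the isomorphism, I would define $\phi : B_R(G,e) \to V_n$ by $\phi(g) = v\sigma_n(g)$. Iterating the multiplicativity relation on prefixes of a reduced expression $g = s_1 s_2 \cdots s_k$ with $k \leq R$ (all of which lie in $B_R(\Gamma,e) \subseteq B$), a short induction yields
\[
v\sigma_n(s_1 s_2 \cdots s_k) \;=\; v\sigma_n(s_1)\sigma_n(s_2)\cdots\sigma_n(s_k).
\]
From this identity, surjectivity of $\phi$ onto $B_R(G_n, v)$ and preservation of $S$-labelled edges are immediate: a path of length $\leq R$ in $G_n$ starting at $v$ reads off exactly the sequence of images $\sigma_n(s_i)$.

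The only genuinely delicate point is injectivity, and this is where the non-fixing condition enters. If $\phi(g) = \phi(h)$ with $g \neq h$ both in $B_R(G,e)$, then $a := gh^{-1}$ is a non-identity element of $B_{2R}(\Gamma,e) = B$. Applying multiplicativity once more (legal because $g$ and $h^{-1}$ both lie in $B$) gives $v\sigma_n(gh^{-1}) = v$, contradicting the requirement $v\sigma_n(a) \neq v$ built into the definition of $V_n^{\mathrm{good}}$.

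I do not expect a serious obstacle: the entire argument is a careful unwinding of (Sof) with the correct choice $B = B_{2R}(\Gamma,e)$ (the factor of $2$ is what makes the injectivity step legal). The only minor technicality is that the definition does not explicitly force $\sigma_n(e)$ to act as the identity, but this can either be arranged without loss of generality or sidestepped by expressing everything in terms of non-identity generators from the symmetric set $S$.
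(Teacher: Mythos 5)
Your argument is correct and is exactly what the paper intends: the lemma is stated with no proof beyond the phrase ``Applying (Sof) gives the following,'' and your unwinding with $B=B_{2R}(\Gamma,e)$ is the standard way to do it. The one point you flag about $\sigma_n(e)$ can also be sidestepped without any normalization by noting that $v\sigma_n(gh^{-1})\sigma_n(h)=v\sigma_n(g)=v\sigma_n(h)=v\,\sigma_n(h)$ and that $\sigma_n(h)$ is a \emph{permutation}, so injectivity of $\sigma_n(h)$ forces $v\sigma_n(gh^{-1})=v$, contradicting the non-fixing clause of (Sof).
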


Clearly, every sequence of $S$-labeled graphs as above defines a sofic
approximation. In other words, a sequence of $S$-labeled graphs is realized by
a sofic approximation if and only if it Benjamini-Schramm converges to the
Cayley graph $\mathrm{Cay}(\Gamma,S)$. 

Recall that if  $\Gamma$ is a group and $H\leq\Gamma$ is such that the normalizer
$N_{\Gamma}(H)$ has finite index in $\Gamma$, then $\mu_{H}$ denotes the uniform
probability measure on the set of $\Gamma$-conjugates of $H$.

For a finite set of symbols $S$ let $F_{S}$ denote the free group with
alphabet $S$. In the following, we give an alternate definition of soficity
using invariant random subgroups.

\begin{lemma}
\label{ujirs}Let $\Gamma$ be a finitely generated group. Then $\Gamma$ is
sofic, if and only if there exists a finitely generated free group $F$, a
normal subgroup $N\vartriangleleft F$ with $\Gamma\simeq F/N$ and a
sequence of subgroups $H_{n}\leq F$ of finite index such that
\[
\lim_{n\rightarrow\infty}\mu_{H_{n}}=\mu_{N}\text{.}%
\]

\end{lemma}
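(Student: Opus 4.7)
The plan is to verify the two directions separately, using coset actions in one direction and stabilizer distributions in the other.

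For the \emph{if} direction, assume $F, N, H_n$ as in the statement with $\mu_{H_n}\to\mu_N$. I would take $V_n=F/H_n$ with its natural left $F$-action. Choose a lift $\tilde{a}\in F$ of each $a\in\Gamma$, with $\tilde{e}=e$, and define $\sigma_n:\Gamma\to\mathrm{Sym}(V_n)$ so that $\sigma_n(a)$ acts as $\tilde{a}$. The central observation is the identity
\[
\frac{|\{vH_n\in F/H_n : g\cdot vH_n = vH_n\}|}{[F:H_n]} = \mu_{H_n}(\{K\in\text{Sub}_F : g\in K\}),
\]
valid for every $g\in F$, since $g$ fixes $vH_n$ iff $g\in vH_nv^{-1}$. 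By hypothesis the right-hand side tends to $\mathbf{1}[g\in N]$. Applied with $g=\tilde{a}\notin N$ for $a\in B\setminus\{e\}$, this yields $v\sigma_n(a)\neq v$ for most $v$; applied with the relator words $g=\tilde{a}\tilde{b}\widetilde{ab}^{-1}\in N$ for $a,b\in B$, it yields the multiplicativity clause of (Sof). Since $B$ is finite, a union bound produces a uniform $\varepsilon_n\to 0$.

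For the \emph{only if} direction, start with a sofic approximation $(V_n,\sigma_n)$ of $\Gamma$, fix a finite symmetric generating set $S$ of $\Gamma$, set $F=F_S$, and let $N$ be the kernel of the canonical surjection $F\to\Gamma$. Extend $\sigma_n|_S$ to a homomorphism $\tilde\sigma_n:F\to\mathrm{Sym}(V_n)$, and let $\mu_n$ be the distribution on $\text{Sub}_F$ of the stabilizer $\mathrm{Stab}_F(v)$ for $v\in V_n$ uniform. This $\mu_n$ is $F$-conjugation invariant because each $\tilde\sigma_n(f)$ preserves the uniform measure on $V_n$. A straightforward induction on word length, using (Sof) for a sufficiently large finite $B\subseteq\Gamma$, shows that for every fixed $g\in F$ the proportion of $v$ with $\tilde\sigma_n(g)(v)=v$ converges to $\mathbf{1}[g\in N]$, and a union bound upgrades this to joint cylinder events. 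Hence $\mu_n\to\mu_N$ weakly in $\mathrm{Prob}(\text{Sub}_F)$.

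The only delicate step is extracting a single $H_n$ from the IRS $\mu_n$. Decomposing $V_n$ into $F$-orbits gives $\mu_n=\sum_O (|O|/|V_n|)\,\mu_{H_O}$, where $H_O$ is the stabilizer of a basepoint in the orbit $O$ and $\mu_{H_O}$ is already of the form called for by the lemma. Since $\mu_N=\delta_N$ is a Dirac, for every clopen neighborhood $U$ of $N$ and every $\varepsilon>0$ the total weight of orbits with $\mu_{H_O}(U^c)>\varepsilon$ is at most $\mu_n(U^c)/\varepsilon\to 0$. The Chabauty topology on $\text{Sub}_F$ admits a countable basis of clopen cylinder neighborhoods of $N$ (determined by finite subsets of $F$), so a diagonal argument selects orbits $O_n$ whose stabilizers $H_n:=H_{O_n}$ satisfy $\mu_{H_n}\to\mu_N$. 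The main obstacle I anticipate is precisely this passage from a convex combination to a single summand; what makes it work is that the target $\mu_N$ is an extreme point in a sufficiently quantitative sense, captured by the Chebyshev-style inequality above.
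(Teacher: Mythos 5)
Your proof is correct, and its overall architecture coincides with the paper's: for the ``if'' direction you use the coset action on $F/H_n$ and translate fixed-point proportions into the probability that a word lies in a uniform random conjugate of $H_n$ (exactly as in the paper), and for the ``only if'' direction you extend $\sigma_n$ on generators to a homomorphism of the free group and look at point stabilizers. The one place where you genuinely diverge is the extraction of a \emph{single} finite-index subgroup $H_n$ from the stabilizer data. The paper sidesteps the issue by first perturbing $\sigma_n$ on a vanishing proportion of $V_n$ so that the extended $F$-action becomes transitive; then the stabilizer of any basepoint works, since transitivity makes the uniform conjugate of that stabilizer equal in law to the stabilizer of a uniform point. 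You instead keep the action intact, decompose the stabilizer IRS as the convex combination $\sum_O (|O|/|V_n|)\,\mu_{H_O}$ over orbits, and use a Markov-inequality bound against clopen cylinder neighborhoods of $N$ together with a diagonal argument to select one orbit per $n$. Both routes are valid; yours is slightly longer at this step but avoids modifying the sofic approximation (and avoids having to justify that the number of orbits is $o(|V_n|)$, which the paper's transitivization implicitly requires), while the paper's is shorter once that perturbation is granted. Your orbit-selection sketch is terse but sound: the cylinder neighborhoods $U_m$ of $N$ are nested, so choosing $O_n$ good at all levels $m\le m(n)$ for a slowly growing $m(n)$ does give $\mu_{H_{O_n}}\to\mu_N$.
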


\begin{proof}Assume $\Gamma$ is sofic. Let $S$ be a finite
symmetric generating set of $\Gamma$. Let $\Phi:F_{S}\rightarrow\Gamma$ be the
canonical homomorphism with kernel $N=\ker\Phi$. Let $(V_{n},\sigma_{n})$ be a
sofic approximation of $\Gamma$. By changing the value of $\sigma_{n}$ on a
subset of $V_n$ of proportion which tends to zero with $n$, one can assume that $\sigma_{n}(s^{-1})=\sigma_{n}(s)^{-1}$,
($s\in S,$ $n\geq1$). Then the maps $\sigma_{n}:S\rightarrow\mathrm{Sym}%
(V_{n})$ extend to homomorphisms $\Psi_{n}:F_{S}\rightarrow\mathrm{Sym}%
(V_{n})$. Again, by changing $\sigma_{n}$ on a subset of $V_n$ of small proportion, one can assume
that these actions are transitive. Let $v_{n}\in V_{n}$ and let $H_{n}$ be the
stabilizer of $v_{n}$ with respect to the action defined by $\Psi_{n}$. Let
$K_{n}$ be a uniform random conjugate of $H_{n}$, that is, a random subgroup
of $F_{S}$ with distribution $\mu_{H_{n}}$. Then $K_{n}$ is the stabilizer of
a uniform random element of $V_{n}$. If $g\in N$, then $g\in\ker\Psi_{n}$, so
\[
\mathcal{P}(g\in K_{n})=1\text{ \ (}n\geq1\text{).}%
\]
If $g\notin N$, then $\Phi(g)\neq1$. Let $n_{0}>0$ such that $\Phi(g)\in
S^{n_{0}}$. Then by (Sof), we have
\[
\mathcal{P}(g\in K_{n})=\frac{\left\vert \left\{  x\in V_{n}\mid x\sigma
_{n}(\Phi(g))=x\right\}  \right\vert }{\left\vert V_{n}\right\vert }%
\leq\varepsilon_{n}\text{ \ (}n\geq n_{0}\text{).}%
\]
Since $\varepsilon_{n}\rightarrow0$, we get that $\mathcal{P}(g\in
K_{n})\rightarrow0$. This proves that $\mu_{H_{n}}$ weakly converges to
$\mu_{N}$.

Assume now that there exists a free group $F$ with finite free generating set
$S$, a normal subgroup $N\vartriangleleft F$ with $\Gamma\simeq F/N$ and a
sequence of subgroups $H_{n}\leq F$ of finite index such that $\lim
_{n\rightarrow\infty}\mu_{H_{n}}=\mu_{N}$. Let $K_{n}$ be a uniform random
conjugate of $H_{n}$. Let $\Phi:F\rightarrow\Gamma$ be the quotient map by
$N$. For $g\in\Gamma$ choose $\phi(g)\in\Phi^{-1}(g)$. 

Let $V_{n}=F/H_{n}$ ($n\geq1$). Then $F$ acts on $V_{n}$ by the right coset
action. We define $\sigma_{n}:\Gamma\rightarrow\mathrm{Sym}(V_{n})$ as
\[
x\sigma_{n}(g)=x\phi(g)\text{ (}x\in V_{n}\text{).}%
\]
We claim that $(V_{n},\sigma_{n})$ is a sofic approximation for $\Gamma$. Let
$B\subseteq\Gamma$ be a finite subset. For $e\neq a\in B$ we have
$\phi(a)\notin N$, so by weak convergence, we have
\[
\lim_{n\rightarrow\infty}\frac{\left\vert \left\{  x\in V_{n}\mid x\sigma
_{n}(a)=x\right\}  \right\vert }{\left\vert V_{n}\right\vert }=\lim
_{n\rightarrow\infty}\mathcal{P}(\phi(a)\in K_{n})=0\text{.}%
\]

For $a,b\in B$ we have $w=\phi(a)\phi(b)\phi(ab)^{-1}\in N$, so by weak
convergence, we have
\[
\lim_{n\rightarrow\infty}\frac{\left\vert \left\{  x\in V_{n}\mid x\phi
(a)\phi(b)=x\phi(ab)\right\}  \right\vert }{\left\vert V_{n}\right\vert }%
=\lim_{n\rightarrow\infty}\mathcal{P}(w\in K_{n})=1\text{.}%
\]
The claim holds. \end{proof} \bigskip

\section{Combinatorial cost and rank gradient}\label{Section:CombCost}
For a graph $G$ let $d_{G}$ denote the graph metric on $V(G)$. For $G=(V,E)$
and $G^{\prime}=(V,E^{\prime})$ we define the \emph{bi-Lipschitz distance of
}$G$\emph{ and }$G^{\prime}$ as
\[
d_{L}(G,G^{\prime})=\max\left\{  \max_{(x,y)\in E}d_{G^{\prime}}%
(x,y),\max_{(x,y)\in E^{\prime}}d_{G}(x,y)\right\}  \text{.}%
\]

Note that $d_{L}$ equals the bi-Lipschitz distortion of the identity map
$V\rightarrow V$ between the metric spaces $(V,d_{G})$ and $(V,d_{G^{\prime}%
})$.

Now we define the combinatorial cost of a graph sequence. Let $G_{n}%
=(V_{n},E_{n})$ be a sequence of finite graphs.

\begin{definition}
We say that the graph sequence $G_{n}^{\prime}=(V_{n},E_{n}^{\prime})$ is a
\emph{rewiring} of $G_{n}$ (denoted by $(G_{n})\sim(G_{n}^{\prime})$) if
$d_{L}(G_{n},G_{n}^{\prime})$ is bounded in $n$.
\end{definition}

Let the \emph{edge measure }of the graph sequence ($G_{n}$) be
\[
e(G_{n})=\lim\inf_{n\rightarrow\infty}\frac{\left\vert E_{n}\right\vert
}{\left\vert V_{n}\right\vert }\text{.}%
\]

\begin{definition}
Let the combinatorial cost of ($G_{n}$) be
\[
\mathrm{cc}(G_{n})=\inf_{(G_{n}^{\prime})\sim(G_{n})}e(G_{n}^{\prime})\text{.}%
\]
\end{definition}

\noindent
{\bf Remark:}
These notions were introduced by Elek in \cite{elek}. Note that for the purpose of the current paper it would be slightly more natural to define the edge measure as a $\lim\sup$. However, we decided not to change the original definition of Elek. As a result at some points we are led to make extra assumptions, for instance in Theorem \ref{rgcc} we have to assume that the rank gradient exists. See also Remark \ref{rem:limsup}.

\medskip

We now prove a vanishing theorem on the combinatorial cost of sofic
approximations of right angled groups. In the proof we adapt Gaboriau's method
\cite{gabor} who used it to prove that higher rank semisimple non-uniform
lattices have fixed price $1$.

\begin{theorem}
\label{rightangcc}Let $\Gamma$ be a right angled group and let $(V_{n}%
,\sigma_{n})$ be a sofic approximation of $\Gamma$. Then
\[
\mathrm{cc}(\mathrm{Sch}(V_{n},S,\sigma_{n}))=1
\]
for any finite symmetric generating set $S$ of $\Gamma$.
\end{theorem}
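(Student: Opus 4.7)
My plan is to adapt Gaboriau's cost-one argument for right-angled groups from~\cite{gabor} to the combinatorial/finite setting. The lower bound $\mathrm{cc}(G_n)\ge 1$ is essentially automatic: each connected component of $G_n=\mathrm{Sch}(V_n,S,\sigma_n)$ stays connected in any rewiring $G'_n$ (bounded $d_L$ preserves components), so $|E'_n|\ge|V_n|-c_n$ where $c_n$ is the number of components of $G_n$. Lemma~\ref{graf} forces $c_n/|V_n|\to 0$: any vertex with Cayley-like $R$-ball lies in a component of size $\ge |B_R|$, and both the bad-vertex fraction and $1/|B_R|$ can be made arbitrarily small. The substantive task is to prove $\mathrm{cc}(G_n)\le 1$, which I will do by exhibiting, for each $\varepsilon>0$, a rewiring with edge density at most $1+\varepsilon+o(1)$.

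Fix the right-angled generating list $\gamma_1,\ldots,\gamma_m$ with $[\gamma_i,\gamma_{i+1}]=1$. Choose integers $N_1,\ldots,N_{m-1}$ with $\sum 1/N_i<\varepsilon$ and then $R$ large enough that the $R$-ball around the identity in $\mathrm{Cay}(\Gamma,S)$ contains all words $\gamma_i^{\pm k}$ for $|k|\le N_i$ together with short $S$-decompositions of the $\gamma_i$. By Lemma~\ref{graf}, the set $V_n^{\mathrm{good}}$ of vertices whose $R$-ball in $G_n$ is Cayley-like satisfies $|V_n^{\mathrm{good}}|/|V_n|\to 1$. I build $E'_n$ in layers: first retain every original edge incident to a bad vertex (contributing $o(|V_n|)$ edges); then include every $\gamma_m$-edge at good vertices (density $\to 1$); finally, for $i=m-1,\ldots,1$, partition each cycle of the permutation $\sigma_n(\gamma_{i+1})$ lying in $V_n^{\mathrm{good}}$ into arcs of length $N_i$, designate one representative per arc, and add the single $\gamma_i$-edge at that representative—contributing at most $(1+o(1))|V_n|/N_i$ edges per level. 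Summing gives $e(G'_n)\le 1+\sum 1/N_i+o(1)\le 1+\varepsilon+o(1)$.

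The bi-Lipschitz estimate proceeds by downward induction on $i$: with $D_m:=1$ and $D_i:=2N_iD_{i+1}+1$, I claim $d_{G'_n}(x,x\sigma_n(\gamma_i))\le D_i$ for every good $x$. The inductive step is the crucial application of commutativity: letting $y=x\sigma_n(\gamma_{i+1})^k$ be the representative of $x$'s $\gamma_{i+1}$-arc, so $|k|\le N_i$, the relation $\gamma_i\gamma_{i+1}^k=\gamma_{i+1}^k\gamma_i$, valid in the $R$-good region, yields $y\sigma_n(\gamma_i)=(x\sigma_n(\gamma_i))\sigma_n(\gamma_{i+1})^k$. One reaches $x\sigma_n(\gamma_i)$ from $x$ by walking $|k|$ $\gamma_{i+1}$-steps to $y$ (cost $\le N_iD_{i+1}$ by induction), crossing the unique $\gamma_i$-edge at $y$, and walking $|k|$ $\gamma_{i+1}$-steps back. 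Since every $s\in S$ is a bounded word in the $\gamma_i^{\pm 1}$, this controls $\max_{E_n}d_{G'_n}$; the reverse bound $\max_{E'_n}d_{G_n}$ is immediate because each new edge corresponds to a single $\gamma_i$ of bounded $S$-length. Hence $(G'_n)\sim(G_n)$, and letting $\varepsilon\to 0$ gives $\mathrm{cc}(G_n)\le 1$.

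The main obstacle is ensuring that the commutation identities invoked in the induction genuinely apply along every auxiliary path, not merely at the starting vertex: the identity $\sigma_n(\gamma_i)\sigma_n(\gamma_{i+1})^k=\sigma_n(\gamma_{i+1})^k\sigma_n(\gamma_i)$ is only guaranteed at $R$-Cayley-like vertices, which forces $R$ to be chosen large relative to $\prod N_i$ and to the $S$-word lengths of the $\gamma_i$. A related nuisance is short $\sigma_n(\gamma_{i+1})$-cycles, which could otherwise spoil the $\gamma_i$-edge count; but a vertex on a cycle of length $<N_i$ is automatically $R$-bad once $R\ge N_i$ and is therefore absorbed into the bad set. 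A careful quantitative accounting of these boundary contributions is precisely what upgrades the argument to the polynomial-distortion form asserted in Theorem~\ref{ccsofapprox}.
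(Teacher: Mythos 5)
Your proposal is correct and is essentially the paper's own argument: Gaboriau's sparsification adapted to the finite setting, keeping one full permutation's worth of edges, replacing the $\gamma_i$-edges at all but a $1/N_i$-density set of representatives chosen along the cycles of the adjacent commuting generator, retaining all edges at vertices with non-Cayley-like balls, and recovering the deleted edges via the conjugation walk $\gamma_{i+1}^{k}\gamma_i\gamma_{i+1}^{-k}$ with the same recursive distortion bound (the paper's $(2R+1)^k$ versus your $D_i=2N_iD_{i+1}+1$). The only real divergence is that the paper first replaces $S$ by the right-angled generating list (itself a rewiring), which makes the exceptional-vertex patch $F_n$ directly contain the needed $\gamma_i$-edges and dispenses with the nested-radius bookkeeping you correctly flag as the main obstacle.
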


\begin{proof} Changing the generating set is a rewiring on the
Schreier graph level, so we can assume that $S$ is a right angled generating
list. That is,
\[
S=\left\{  s_{1},\ldots,s_{k}\right\}  \cup\left\{  s_{1}^{-1},\ldots
,s_{k}^{-1}\right\}
\]
where all $s_{i}$ have infinite order and $[s_{i},s_{i+1}]=1$ ($1\leq i<k$).
Let
\[
G_{n}=\mathrm{Sch}(V_{n},S,\sigma_{n})\text{ and }G=\mathrm{Cay}%
(\Gamma,S)\text{ \ (}n\geq1\text{).}%
\]
Note that $\left\{  s_{1},\ldots,s_{k}\right\}  $ may be a multiset. In that
case, we take the corresponding edges of $G_{n}$ and $G$ with multiplicity. We
will use the shortcut
\[
xg=x\sigma_{n}(g)\text{ \ (}g\in\Gamma,x\in V_{n},1\leq i<k,n\geq1\text{).}%
\]

Let $R>0$ be an even integer. We define a subgraph $H_{n}$ of $G_{n}$ as
follows. Let $C_{n,i}$ be the subgraph spanned by the $s_{i}$-edges in $G_n$ ($1\le  i\le k)$. The graph
$C_{n,i}$ is a disjoint union of cycles. Let $X_{n,1}=V(G_{n})$. For $\,1<i\leq
k$ let $X_{n,i}\subseteq V(G_{n})$ be a maximal $R$-separated subset with
respect to the graph metric $d_{C_{n,i-1}}$. Recall that a set is $R$-separated if
every two distinct points in it has distance at least $R$. By maximality, for
every $x\in V(G_{n})$ there exists $y\in X_{n,i}$ with $d_{C_{n,i-1}}(x,y)\leq
R$. The density of $X_{n,i}$ in a cycle of size at least $R$ is at most $1/R$.
Let  
\[
E_{n}=\left\{  v\in V(G_{n})\mid B_{R+1}(G_{n},v)\ncong B_{R+1}(G,e)\right\}
\text{.}%
\]
be the set of exceptional vertices, where the sofic approximation is bad. 

Let the edge sets 
\[
D_{n,i}=\left\{  (x,s_{i})\mid x\in X_{n,i}\right\}  \text{ (}1\leq i\leq
k\text{)}%
\]
and  
\[
F_{n}=\left\{  (v,s_{i})\mid v\in E_{n},1\leq i\leq k\right\}  \text{.}%
\]
Let the graph 
\[
H_{n}=F_{n}\cup\bigcup\limits_{i=1}^{k}D_{n,i}\text{.}%
\]
That is, we keep all the $s_{1}$-edges in $G_n$ and add the $s_{i}$-edges starting at
$X_{n,i}$ ($1<i\leq k$) and then add all edges starting at the exceptional
vertices. 

We claim that $(H_{n})$ is a rewiring of $(G_{n})$ with bi-Lipschitz distortion at most $(2R+1)^k$ . Since $H_{n}$ is a
subgraph of $G_{n}$, we only have to check one Lipschitz condition. Let $x\in
V(G_{n})$, let $1<m\leq k$ and let $y=xs_{m}$. We claim that $d_{H_{n}%
}(x,y)\leq(2R+1)^{m}$. We prove this by induction on $m$. For $m=1$ this is
trivial. Assume $m>1$. Then there exists $z\in X_{n,m}$ such that $d_{C_{n,m-1}%
}(x,z)\leq R$, that is, $z=xs_{m-1}^{l}$ with some $0\leq\left\vert
l\right\vert \leq R$. If $x\in E_{n}$ is exceptional then $(x,s_{m})\in F_{n}\subseteq H_{n}%
$. Otherwise, the $R+1$ ball around $x$ in $G_{m}$ is isomorphic to the $R+1$
ball in $G$. Since $s_{m-1}$ and $s_{m}$ commute in $\Gamma$, the word
$s_{m-1}^{l}s_{m}s_{m-1}^{-l}$ is a walk of length at most $2R+1$ going from $x$ to $y$ in
$C_{n,m-1}\cup D_{n,m}$. That is, $d_{C_{n,m-1}\cup D_{n,m}}(x,y)\leq2R+1$. By
induction, every $s_{m-1}$-edge in this walk can be substituted by a walk of
length at most $(2R+1)^{m-1}$ in $H_{n}$. This proves the claim and yields
\[
d_{L}(G_{n},H_{n})\leq(2R+1)^{k}%
\]
which implies that $(H_{n})$ is a rewiring of $(G_{n})$. 

Since $s_{i}$ has infinite order, for all $x\notin E_{n}$ the $C_{n,i}$-cycle of
$x$ has size at least $R$. On such cycles the density of $X_{n,i}$ is at most
$1/R$ ($1\leq i\leq k$). This implies
\[
E(H_{n})\leq k\left\vert E_{n}\right\vert +\left\vert V(G_{n})\right\vert
+(k-1)(\left\vert E_{n}\right\vert +\frac{\left\vert V(G_{n})\right\vert
-\left\vert E_{n}\right\vert }{R})
\]
which yields
\[
\frac{\left\vert E(H_{n})\right\vert }{\left\vert V(G_{n})\right\vert }%
\leq1+\frac{k-1}{R}+(2k-1-\frac{k-1}{R})\frac{\left\vert E_{n}\right\vert
}{\left\vert V(G_{n})\right\vert }\text{.}%
\]
Lemma \ref{graf} gives $\left\vert E_{n}\right\vert /\left\vert V(G_{n}%
)\right\vert \rightarrow0$, which gives
\[
e(H_{n})\leq1+\frac{k-1}{R}\text{.}%
\]
By chosing $R$ arbitrarily large, we get $\mathrm{cc}(G_{n})=1$. 
\end{proof} \bigskip

The following corollary of the above proof directly implies Theorem \ref{ccsofapprox}.
\begin{proposition} \label{R}
Let $\Gamma$ be a right angled group generated by a set of elements $S=\{x_1, \ldots, x_k\}$ of infinite order such that
$[x_i,x_{i+1}]=1$ for all $i=1, \ldots, k-1$. Let $(V_n, \sigma_n)$ be a sofic approximation of $\Gamma$ and let $G_{n}=\mathrm{Sch}(V_{n},S,\sigma_{n})$.

For every even integer $R \in \mathbb N$ there exist rewiring graphs $H_n$ of $G_n$ such that $d(G_n,H_n) \leq (2R+1)^k$ and
\[ \frac{\left\vert E(H_{n})\right\vert }{\left\vert V(G_{n})\right\vert }%
\leq1+\frac{k-1}{R}+2k\frac{\left\vert E_{n}\right\vert
}{\left\vert V(G_{n})\right\vert }
\]
where 
\[
E_{n}=\left\{  v\in V(G_{n})\mid B_{R+1}(G_{n},v)\ncong B_{R+1}(\mathrm{Cay}(\Gamma,S),e)\right\}
\text{.}%
\]
In particular
\[ \limsup_{n} \frac{|E(H_n)|}{|V(G_n)|} \leq 1+ \frac{k-1}{R} .\]

\end{proposition}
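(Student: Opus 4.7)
The plan is to recycle, essentially verbatim, the construction and estimates from the proof of Theorem \ref{rightangcc}, but to retain the explicit dependence on $R$ and on $|E_n|$ instead of immediately passing to the limit. Accordingly, I would take the very same rewiring $H_n = F_n \cup \bigcup_{i=1}^{k} D_{n,i}$, where $X_{n,1} = V(G_n)$ and $X_{n,i}$ (for $i > 1$) is a maximal $R$-separated subset with respect to the graph metric $d_{C_{n,i-1}}$ of the subgraph $C_{n,i-1}$ spanned by $x_{i-1}$-edges, $D_{n,i}$ consists of the $x_i$-edges emanating from $X_{n,i}$, and $F_n$ is the set of all $x_i$-edges out of exceptional vertices. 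The proof of Theorem \ref{rightangcc} already establishes $d_L(G_n,H_n) \leq (2R+1)^k$ by induction on $m$: at a non-exceptional vertex $v$ with $y = v x_m$ one uses $[x_{m-1},x_m] = 1$ to reroute along the walk $x_{m-1}^l x_m x_{m-1}^{-l}$ of length $\leq 2R+1$ inside $C_{n,m-1} \cup D_{n,m}$, and the inductive hypothesis replaces each $x_{m-1}$-step by a walk in $H_n$ of length $\leq (2R+1)^{m-1}$. This step transfers unchanged to our setting; the fact that $S$ is not symmetrized is irrelevant since the underlying Schreier graph is undirected.

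The remaining content is then bookkeeping. I would bound $|F_n| \leq k|E_n|$ and $|D_{n,1}| = |V(G_n)|$ directly, and for $1 < i \leq k$ use the fact that any $v \notin E_n$ lies on an $x_{i-1}$-cycle of length at least $R$, since $x_{i-1}$ has infinite order and the isomorphism $B_{R+1}(G_n,v) \cong B_{R+1}(\mathrm{Cay}(\Gamma,S),e)$ prevents the cycle from closing up inside the ball. Hence $R$-separatedness forces $X_{n,i}$ to occupy at most a $1/R$ fraction of non-exceptional vertices, yielding $|X_{n,i}| \leq |E_n| + (|V(G_n)|-|E_n|)/R$. Summing the three contributions and dividing by $|V(G_n)|$ gives
\[
\frac{|E(H_n)|}{|V(G_n)|} \leq 1 + \frac{k-1}{R} + \left(2k - 1 - \frac{k-1}{R}\right)\frac{|E_n|}{|V(G_n)|} \leq 1 + \frac{k-1}{R} + 2k\,\frac{|E_n|}{|V(G_n)|},
\]
where the second inequality uses $2k - 1 - (k-1)/R \leq 2k$. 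This is the required estimate.

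For the concluding ``in particular'' clause I would invoke Lemma \ref{graf}, which asserts that $|E_n|/|V(G_n)| \to 0$ for any sofic approximation, and take $\limsup$ in the displayed bound. There is no substantive obstacle: the proposition is effectively a reformulation of the quantitative bound that appears mid-proof in Theorem \ref{rightangcc}, stated so that both the rate in $R$ and the sofic defect term in $|E_n|$ are visible. The only point that requires any care is the trivial simplification from the genuine coefficient $2k - 1 - (k-1)/R$ to the cleaner $2k$.
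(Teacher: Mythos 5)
Your proposal is correct and is essentially identical to the paper's treatment: the paper states Proposition \ref{R} as a direct corollary of the proof of Theorem \ref{rightangcc}, extracting exactly the intermediate bound $|E(H_n)|/|V(G_n)| \leq 1 + \frac{k-1}{R} + \bigl(2k-1-\frac{k-1}{R}\bigr)\frac{|E_n|}{|V(G_n)|}$ and the distortion estimate $(2R+1)^k$ before the limit is taken. Your bookkeeping, the simplification of the coefficient to $2k$, and the appeal to Lemma \ref{graf} for the limsup all match the intended argument.
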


Theorem \ref{ccsofapprox} follows directly by setting $\epsilon_R= k/R$ and noting that
\[
d(G_n,H_n) \leq (2R+1)^k= O(\epsilon_R^{-k}).
\]

For a subgroup $H$ of $\Gamma$ let $\mathrm{Sch}(\Gamma,H,S)$ denote the
Schreier diagram of $\Gamma$ with respect to $H$ and $S$. Recall that the
vertex set of $\mathrm{Sch}(\Gamma,H,S)$ is the right coset space $\Gamma/H$
and for every $s\in S$ and $x\in\Gamma/H$ there is an $s$-labeled edge going
from $x$ to $xs$. The metric coming from a Schreier diagram is defined by
forgetting its labels and treating it as a simple graph.

We will use the language of measured groupoids to analyze the connection
between the rank gradient and the combinatorial cost. This goes back to
\cite{miknik} where the first and last authors expressed the rank gradient of
a chain from the cost of the corresponding profinite measured groupoid.

Recall that a measured groupoid $\mathcal{G}$ is a Borel probability space
$\mathcal{G}^{0}$ together with a\ Borel space of arrows $\mathcal{G}^{1}$,
maps $i,t:\mathcal{G}^{1}\rightarrow\mathcal{G}^{0}$ (the initial and terminal
map) and a composition operation such that $\mathcal{G}$ is a groupoid with
respect to these operations. We also assume that the underlying measure $\mu$
is invariant under the arrows.

Let the countable group $\Gamma$ act on the Borel probability space $(X,\mu)$
by measure preserving maps. Set $\mathcal{G}^{0}=(X,\mu)$ and $\mathcal{G}%
^{1}=\left\{  (x,g)\mid x\in X,g\in\Gamma\right\}  $. Let $i((x,g))=x$ and
$t((x,g))=xg$. Let the product $(x,g)(y,h)$ be defined if $y=xg$, in this case
it equals $(x,gh)$ and the inverse of $(x,g)$ is defined to be $(xg,g^{-1})$.
This defines the measured groupoid of the action. Using the initial map the
measure $\mu$ can be naturally lifted to a Borel measure $\widetilde{\mu}$ on
$\mathcal{G}^{1}$.

Let the \emph{cost} of a measured groupoid $\mathcal{G}$ be
\[
\mathrm{c}(\mathcal{G})=\inf_{\substack{Y\subseteq\mathcal{G}^{1}\text{
Borel}\\Y\text{ generates }\mathcal{G}}}\widetilde{\mu}(Y)
\]
\newline the infimum of measures of generating Borel subsets. Here generation is meant in the usual algebraic sense. 
The notion of cost for measurable equivalence relations has been introduced by Levitt
\cite{levitt} and analyzed in depth by Gaboriau \cite{gabor}.

A case of particular interest in this paper is when $\Gamma$ acts on the right
coset space $\Gamma/H$ for some finite index subgroup $H$ of $\Gamma$. Here
$\mu$ and $\widetilde{\mu}$ are just counting measures, normalized by the
index of $H$ in $\Gamma$. In \cite{miknik} the following is proved. Since we
use a somewhat different language in \cite{miknik}, we sketch a proof here.

\begin{lemma}
\label{measuredgr}Let $\Gamma$ be a countable group and let $H$ be a subgroup
of finite index in $\Gamma$. Let $\mathcal{G}$ be the measured groupoid of the
right coset action of $\Gamma$ on $\Gamma/H$. Then we have
\[
r(\Gamma,H)=\frac{d(H)-1}{|G:H|}=\mathrm{c}(\mathcal{G})-1\text{.}%
\]

\end{lemma}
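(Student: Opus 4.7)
\smallskip

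\textbf{Proof plan for Lemma \ref{measuredgr}.}

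The plan is to reduce the cost computation to an elementary combinatorial count on the Schreier graph. Let $n=|\Gamma:H|$. The base space $\mathcal{G}^0=\Gamma/H$ carries the normalized counting measure, so the lifted measure $\widetilde{\mu}$ assigns weight $1/n$ to every arrow $(x,g)\in\mathcal{G}^1$. I will think of each arrow as a $\Gamma$-labeled directed edge from $x$ to $xg$ on the vertex set $\Gamma/H$. Fix the basepoint $x_0=H$, whose stabilizer is $H$ itself. The first step is to make precise the following combinatorial reformulation of groupoid-generation: a Borel (necessarily countable, as a subset of a countable set) subset $Y\subseteq\mathcal{G}^1$ generates $\mathcal{G}$ if and only if (a) the underlying edge set of $Y$ makes $\Gamma/H$ connected, and (b) after choosing a spanning subtree $T\subseteq Y$ with tree-paths labeled $t_x$ from $x_0$ to $x$, the elements $t_x\, g\, t_{xg}^{-1}\in H$ obtained from the remaining arrows $(x,g)\in Y\setminus T$ generate $H$ as a group. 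Indeed, any arrow $(y,\gamma)\in\mathcal{G}^1$ can be produced from $Y$ precisely by traveling from $y$ to $x_0$ in $T$, expressing the resulting loop in $H$ using the chosen generators, and returning via $T$ to $y\gamma$.

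For the upper bound $\mathrm{c}(\mathcal{G})\leq 1+(d(H)-1)/n$, I would exhibit an explicit generating set: pick any spanning tree $T$ of the simple graph underlying $\mathrm{Sch}(\Gamma,H,S)$ (for some generating set $S$ of $\Gamma$), lift its $n-1$ edges to arrows in $\mathcal{G}^1$, and adjoin the $d(H)$ loops $(x_0,h_j)$ where $h_1,\ldots,h_{d(H)}$ generate $H$. Property (a) is immediate, and property (b) holds because the loop at $x_0$ labeled $h_j$ closes up (with trivial tree detour) to $h_j$ itself. The total measure is $(n-1+d(H))/n$.

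For the lower bound, given any generating $Y$, pick any subset $Y_T\subseteq Y$ whose underlying edges form a spanning tree of $\Gamma/H$. By (a) this requires $|Y_T|\geq n-1$. Each remaining arrow $(x,g)\in Y\setminus Y_T$ closes up to an element $t_x\, g\, t_{xg}^{-1}\in H$, and by (b) these elements must generate $H$, forcing $|Y\setminus Y_T|\geq d(H)$. Therefore $|Y|\geq n-1+d(H)$ and $\widetilde{\mu}(Y)\geq 1+(d(H)-1)/n$. Combined with the upper bound, this yields the claimed equality.

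The only step where real care is needed is the equivalence between groupoid-generation and the combinatorial conditions (a)+(b); once it is set up, both bounds are essentially forced. One mildly subtle point to check is that if $Y$ is allowed to contain multiple arrows $(x,g_1),(x,g_2)$ with $xg_1=xg_2$, the count by edges of $Y$ still dominates $n-1+d(H)$, because such arrows differ by a loop in $H$ at $xg_1$ and so contribute on the ``stabilizer'' side rather than the ``connectivity'' side of the count. This is the one place where I expect the argument to need a careful, rather than a cavalier, write-up.
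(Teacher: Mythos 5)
Your proposal is correct and follows essentially the same route as the paper: the upper bound via a spanning tree plus loops at the basepoint labeled by generators of $H$, and the lower bound via the characterization that $Y$ generates $\mathcal{G}$ iff its underlying graph is connected and the induced map on loops surjects onto $H$ (the paper phrases your non-tree-arrow count as $d(\pi_1)\geq d(H)$ with $\pi_1$ free of rank $|Y|-|\Gamma:H|+1$, which is the same bookkeeping). The multiplicity issue you flag is handled in the paper simply by allowing the graph to have multiple edges and loops, for which the rank formula for $\pi_1$ still holds.
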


\noindent\textbf{Proof. }If $R$ generates $H$ then it is easy to see that
\[
\left\{  (H,r)\mid r\in R\right\}
\]
together with any subset of $\mathcal{G}^{1}$ that connects $\mathcal{G}^{0}$
(e.g. a spanning tree) will generate $\mathcal{G}$. This gives
\[
\mathrm{c}(\mathcal{G})\leq\frac{1}{\left\vert \Gamma:H\right\vert }\left(
\left\vert R\right\vert +(\left\vert \Gamma:H\right\vert -1)\right)
\]
which gives
\[
r(\Gamma,H)\geq\mathrm{c}(\mathcal{G})-1\text{.}%
\]

Let $Y\subseteq\mathcal{G}^{1}$ be a subset. Look at the graph $G$ on
$\Gamma/H$ defined by $Y$. Note that $G$ may have multiple edges and loops.
The fundamental group $\pi_{1}$ of $G$ rooted at the vertex $H$ admits a
natural homomorphism $\Phi$ to $H$ by evaluating loops by taking the product
of edge labels coming from $\mathcal{G}^{1}$. It is easy to see that $Y$
generates $\mathcal{G}$ if and only if $G$ is connected and $\Phi$ is
surjective. So, if $Y$ generates $\mathcal{G}^{1}$, we have
\[
d(\pi_{1})\geq d(H)\text{.}%
\]

Let $Y$ be a generating subset of minimal measure $\mathrm{c}(\mathcal{G})$.
The fundamental group $\pi_{1}$ is a free group of rank $\left\vert
Y\right\vert -\left\vert \Gamma:H\right\vert +1$, which yields
\[
r(\Gamma,H)=\frac{d(H)-1}{\left\vert \Gamma:H\right\vert }\leq\frac{\left\vert
Y\right\vert }{\left\vert \Gamma:H\right\vert }-1=\mathrm{c}(\mathcal{G}%
)-1\text{.}%
\]

The Lemma holds. $\square$\bigskip

Let $\mathcal{G}$ be a measured groupoid of a p.m.p. action of $\Gamma$ on
$(X,\mu)$. Let the trivial subset
\[
N=\left\{  (x,1)\in\mathcal{G}^{1}\mid x\in X\right\}  \text{.}%
\]

\begin{definition}
\label{groupoiddist}For two Borel subsets $A,B\subseteq\mathcal{G}^{1}$ let
the distance between them be defined as
\[
d_{L}(A,B)=\inf\left\{  k\in\mathbb{N}\mid A\subseteq(B\cup B^{-1}\cup
N)^{k}\text{ and }B\subseteq(A\cup A^{-1}\cup N)^{k}\right\}  \text{.}%
\]

\end{definition}

Of course, $d_{L}$ may be infinite, even when both $A$ and $B$ generate
$\mathcal{G}$. \medskip

Let $H$ be a subgroup of $\Gamma$ of finite index. Let $\mathcal{G}$ denote
the measured groupoid defined by the right action of $\Gamma$ on $\Gamma/H$,
with underlying measure $\mu$. Let $G=(V,F)$ be an abstract directed graph defined on
the vertex set $V=\Gamma/H$. By a $\Gamma$-labeling of $G$ we mean a map
$\phi:F\rightarrow\Gamma$ such that for all $f=(x,y)\in F$, we have
$x\phi(f)=y$. In other terms, $\phi$ is a map from $F$ to $\mathcal{G}^{1}$
which is trivial on the $\mathcal{G}^{0}$ level. When $G$ is a Schreier graph,
the natural $\Gamma$-labeling means the labeling coming from the coset action.

The following lemma controls how a rewiring of a Schreier graph (viewed as an
abstract graph) can be labeled and extended to be a generator for the measured
groupoid. This lemma is essential to proving Theorem \ref{rgcc} and the
estimates on the torsion homology.

\begin{lemma}
\label{lenyeg}Let $\Gamma$ be a group generated by the finite symmetric set
$S$. Let $H$ be a subgroup of $\Gamma$ of finite index with measured groupoid
$\mathcal{G}$ and let $K$ be the uniform random conjugate of $H$ in $\Gamma$.
Let $G=(V,E)=\mathrm{Sch}(\Gamma,H,S)$ with the natural $\Gamma$-labeling
$\varphi$ and let $G^{\prime}=(V,F)$ be a graph with $D=d_{L}(G,G^{\prime})$.
Then there exists a $\Gamma$-labeling $\phi$ of $G^{\prime}$ and a subset
$I\subseteq\mathcal{G}^{1}$ of measure
\[
\tilde \mu(I)\leq\sum\limits_{g\in S^{D^{2}+1}\backslash\{1\}}\mathcal{P}(g\text{ is
in }K)
\]
such that
\[
d_{L}(\varphi(E),\phi(F)\cup I)\leq D^{2}+1\text{.}%
\]
In particular, we have
\[
r(\Gamma,H)\leq\frac{\left\vert F\right\vert }{\left\vert V\right\vert
}-1+\sum\limits_{g\in S^{D^{2}+1}\backslash\{1\}}\mathcal{P}(g\text{ is in
}K)\text{.}%
\]

\end{lemma}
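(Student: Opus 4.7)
The plan is to build the labeling $\phi$ by pulling back paths from the Schreier graph $G$, and to concentrate the unavoidable discrepancy into a set $I$ of loops at the base points.

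First, for each edge $f = (x,y) \in F$ I use $d_G(x,y) \le D$ to choose a path of length at most $D$ in $G$ from $x$ to $y$; reading off its $S$-labels gives $w_f \in \Gamma$ of $S$-length at most $D$ with $x w_f = y$, and I set $\phi(f) = w_f$, so that every arrow in $\phi(F)$ has $\Gamma$-component in $S^{\le D}$. Next, for each $x \in V$ and $s \in S$, I use $d_{G'}(x, xs) \le D$ to find a path of length at most $D$ in $G'$ from $x$ to $xs$ and translate each of its edges through $\phi^{\pm 1}$ to obtain $u_{x,s} \in \Gamma$ of $S$-length at most $D \cdot D = D^2$ with $x u_{x,s} = xs$. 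Then $h_{x,s} := s u_{x,s}^{-1}$ lies in $\mathrm{Stab}(x)$ and has $S$-length at most $D^2 + 1$, and I take
\[
I = \bigl\{(x, g) \in \mathcal{G}^1 : x \in V, \ g \in S^{D^2+1}\setminus\{1\}, \ g \in \mathrm{Stab}(x) \bigr\}.
\]

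For the distance bound, the factorization $(x, s) = (x, h_{x,s}) \cdot (x, u_{x,s})$ together with the decomposition of $(x, u_{x,s})$ along the chosen $G'$-path into at most $D$ arrows from $\phi(F)^{\pm 1}$ expresses every arrow of $\varphi(E)$ as a composition of at most $D + 1$ elements from $\phi(F)^{\pm 1} \cup I \cup N$ (the $N$-slot absorbing the case $h_{x,s} = 1$). Conversely, every arrow of $\phi(F) \cup I$ has $\Gamma$-component of $S$-length at most $D^2 + 1$, and so is a composition of at most $D^2 + 1$ arrows from $\varphi(E)$. These two inclusions yield $d_L(\varphi(E), \phi(F) \cup I) \le D^2 + 1$.

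For the measure bound I use that a uniform random coset $Hg \in V$ has stabilizer $g^{-1} H g$, which is distributed as the uniform random conjugate $K$ of $H$. Counting arrows in $I$ and switching the order of summation gives
\[
\tilde\mu(I) = \frac{1}{|V|} \sum_{g \in S^{D^2+1}\setminus\{1\}} \bigl|\{x \in V : g \in \mathrm{Stab}(x)\}\bigr| = \sum_{g \in S^{D^2+1}\setminus\{1\}} \mathcal{P}(g \in K),
\]
as required. The final estimate on $r(\Gamma, H)$ then follows by combining Lemma \ref{measuredgr} with the obvious bound $\tilde\mu(\phi(F)) \le |F|/|V|$. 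I expect the main technical care to lie in the length bookkeeping: checking that pulling a $G'$-path of length at most $D$ back through $\phi$ yields a word of $S$-length at most $D^2$, so that the correction arrows remain within $S^{D^2+1}$ and the exponent in the distance bound is $D^2 + 1$ rather than something larger.
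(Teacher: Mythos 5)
Your proposal is correct and follows essentially the same route as the paper's proof: label each $f\in F$ by reading a short $G$-path, record the discrepancy $s u_{x,s}^{-1}$ as a stabilizer element of $S$-length at most $D^2+1$, and bound the measure of these loops by $\sum_{g\in S^{D^2+1}\setminus\{1\}}\mathcal{P}(g\in K)$ via the identification of vertex stabilizers with conjugates of $H$. The only (harmless) difference is that you take $I$ to be \emph{all} short nontrivial stabilizer arrows rather than only the specific discrepancies, which turns the measure bound into an equality but still yields the stated inequality.
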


\begin{proof} By the definition of $D$, for each $f=(x,y)\in F$,
there exists $k\leq D$ and a walk $e_{1},\ldots,e_{k}\in E$ from $x$ to $y$.
Choose such a walk and let
\[
\phi(f)=\varphi(e_{1})\cdots\varphi(e_{k})\text{.}%
\]

Let $e=(x,y)\in E$. Then there exists $k\leq D$ and a walk $f_{1},\ldots
,f_{k}\in F$ from $x$ to $y$. Choose such a walk. Let
\[
g(e)=\varphi(e)\phi(f_{k})^{-1}\cdots\phi(f_{1})^{-1}\in\Gamma\text{.}%
\]
Let $I\subseteq\mathcal{G}$ defined by
\[
I=\left\{  (x,g(e))\mid e=(x,y)\in E,g(e)\neq1\right\}  \text{ }%
\]
Note that $g(e)$ stabilizes $x$. Let
\[
J=\phi(F)=\left\{  (x,\phi(f))\mid f=(x,y)\in F\right\}  \text{.}%
\]

We claim that $I\cup J\subseteq\varphi(E)^{D^{2}+1}$. For $f\in F$ we have
$\phi(f)=\varphi(e_{1})\cdots\varphi(e_{k})$ which implies $J\subseteq
\varphi(E)^{D}$. For $e=(x,y)\in E$ with $g(e)\neq1$ we have $g(e)=\varphi
(e)\phi(f_{k})^{-1}\cdots\phi(f_{1})^{-1}$. Since $\phi(f_{k})\in E^{D}$ we
get $(x,g(e))\in\varphi(E)^{D^{2}+1}$ and so $I\subseteq\varphi(E)^{D^{2}+1}$.
The claim holds.

We now claim that $\varphi(E)\subseteq(I\cup J)^{D+1}$. Let $e=(x,y)\in E$. If
$g(e)=1$ then $\varphi(e)=\phi(f_{1})\cdots\phi(f_{k})$ and thus
$\varphi(e)\in J^{D}$. If $g(e)\neq1$ then $\varphi(e)=g(e)\phi(f_{1}%
)\cdots\phi(f_{k})$ and thus $\varphi(e)\in J^{D+1}$. The claim holds.

The two claims together give
\[
d_{L}(\varphi(E),I\cup J)\leq D^{2}+1\text{.}%
\]
Since $\varphi(E)$ generates $\mathcal{G}$, this also implies that $I\cup J$
generates $\mathcal{G}$.

For every $e$, $g(e)$ stabilizes $x$ and can be written as a word of length at
most $D^{2}+1$ in $S$. Hence, the measure
\[
\widetilde{\mu}(I)\leq\sum\limits_{g\in S^{D^{2}+1}\backslash\{1\}}%
\mathcal{P}(g\text{ is in }K)
\]
and we have
\[
\widetilde{\mu}(J)=\left\vert F\right\vert /\left\vert V\right\vert \text{.}%
\]

By Lemma \ref{measuredgr}, $r(\Gamma,H)+1$ equals the minimal $\widetilde{\mu
}$-measure of a generating subset, thus we have
\[
r(\Gamma,H)+1\leq\widetilde{\mu}(I\cup J)\leq\frac{\left\vert F\right\vert
}{\left\vert V\right\vert }+\sum\limits_{g\in S^{d^{2}+1}\backslash
\{id\}}\mathcal{P}(g\text{ is in }K)
\]
The lemma holds. \end{proof} \bigskip

We are ready to prove Theorem \ref{rgcc}. \medskip  

\noindent\textbf{Proof of Theorem \ref{rgcc}.} Let $G_{n}=\mathrm{Sch}(\Gamma,\Gamma_{n},S)$ and let
$cc=\mathrm{cc}(G_{n})$. Let $K_{n}$ be the IRS with distribution $\mu
_{\Gamma_{n}}$.

Let $\varepsilon>0$. Then there exists a rewiring $(G_{n}^{\prime})$ of
$(G_{n})$ such that $e(G_{n}^{\prime})\leq cc+\varepsilon$. Let $d>0$ such
that $d_{L}(G_{n},G_{n}^{\prime})<d$ ($n\geq1$). Let $W=S^{d^{2}+1}%
\backslash\{id\}$. Lemma \ref{lenyeg} implies that for all $n\geq1$ we have%

\[
r(\Gamma,\Gamma_{n})\leq\frac{\left\vert E(G_{n}^{\prime})\right\vert
}{\left\vert G_{n}\right\vert }-1+\sum\limits_{g\in W}\mathcal{P}(g\text{ is
in }K_{n})\text{.}%
\]
Since $(\Gamma_{n})$ is Farber, for each $g\in W$, we have
\[
\lim_{n\rightarrow\infty}\mathcal{P}(g\text{ is in }K_{n})=0\text{.}%
\]
So, taking a liminf on both sides and using that $\mathrm{r}(\Gamma,(\Gamma_{n}))$ converges gives us
\[
\mathrm{RG}(\Gamma,(\Gamma_{n}))\leq\mathrm{cc}(\mathrm{Sch}(\Gamma,\Gamma
_{n},S))-1
\]
and the inequality is proved.

When $(\Gamma_{n})$ forms a chain, then the reverse inequality is proved in
\cite[Theorem 1.2]{elek}. We give a short proof here using the measured groupoid language.

Let $\mathcal{G}_{n}$ be the measured groupoid associated to the action of
$\Gamma$ on $\Gamma/\Gamma_{n}$. Assume that $r(\Gamma,\Gamma_{n_{0}})\leq R$
for some $n_{0}$. Let $F\subseteq$ $\mathcal{G}_{n_{0}}$ be a generating
subset of measure at most $R+1$ and let $E\subseteq$ $\mathcal{G}_{n_{0}}$ be
the standard generating set coming from $S$. Let $d>0$ such that
$F^{d}\supseteq E$ and $E^{d}\supseteq F$, where $Y^{d}$ denotes all words of
length at most $d$ in $\mathcal{G}_{n_{0}}$.

Let $\Phi_{n}:\mathcal{G}_{n}\rightarrow\mathcal{G}_{n_{0}}$ $(n\geq n_{0})$
denote the standard factor map from $\mathcal{G}_{n}$ to $\mathcal{G}_{n_{0}}%
$. This associates a $\Gamma_{n}$-coset to the unique $\Gamma_{n_{0}}$-coset
containing it and is a groupoid homomorphism. Let $F_{n}=\Phi_{n}^{-1}(F)$ and
$E_{n}=\Phi_{n}^{-1}(E)$. Then $F_{n}$ has the same measure as $F$ and we have
$F_{n}^{d}\supseteq E_{n}$ and $E_{n}^{d}\supseteq F_{n}$. Let $G_{n}^{\prime
}$ be the graph defined on $\Gamma/\Gamma_{n}$ by the arrows $F_{n}$. Then we
have
\[
d_{L}(G_{n}^{\prime},\mathrm{Sch}(\Gamma,\Gamma_{n},S))\leq d
\]
and so $G_{n}^{\prime}$ is a rewiring of $\mathrm{Sch}(\Gamma,\Gamma_{n},S)$.
This implies $\mathrm{cc}(\mathrm{Sch}(\Gamma,\Gamma_{n},S))\leq R$ and proves
the reverse inequality.

The Theorem holds. $\square$\bigskip

\noindent\textbf{Remark. }We do not have an example for a strict inequality in
Theorem \ref{rgcc}. Maybe equality holds in general, but it is not clear how
to get a cheap rewiring from a cheap generating set for the measured groupoid.
A priori, such cheap generating sets (or any rewirings of them) may lie quite deep
in the group structure, in which case generation may happen very slowly.

\begin{theorem} \label{ra}
Let $\Gamma$ be a right angled group and let $(\Gamma_{n})$ be a Farber
sequence of subgroups of finite index in $\Gamma$. Then
\[
\mathrm{RG}(\Gamma,(\Gamma_{n}))=0\text{.}%
\]

\end{theorem}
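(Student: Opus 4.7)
The plan is to read the conclusion straight off Theorem \ref{rightangcc} combined with Lemma \ref{lenyeg}, which was the engine behind Theorem \ref{rgcc}. Since $(\Gamma_n)$ is Farber, for any finite symmetric generating set $S$ of $\Gamma$ the Schreier graphs $G_n = \mathrm{Sch}(\Gamma,\Gamma_n,S)$ form a sofic approximation of $\Gamma$ (Section \ref{soficap}), and this forces $|\Gamma:\Gamma_n|\to\infty$ (otherwise the $\mu_{\Gamma_n}$ would have a common finite-index atom and could not converge to $\mu_1$). As $\Gamma$ is infinite each $\Gamma_n$ is infinite, so $d(\Gamma_n)\ge 1$ and $r(\Gamma,\Gamma_n)\ge 0$; it therefore suffices to show that $\limsup_n r(\Gamma,\Gamma_n)\le 0$.

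Fix $\varepsilon>0$. Choose a right angled generating list $s_1,\ldots,s_k$ for $\Gamma$ and take $S=\{s_i^{\pm 1}\}$. Pick an even integer $R>(k-1)/\varepsilon$ and apply Proposition \ref{R}: this produces rewirings $(H_n)$ of $(G_n)$ with bi-Lipschitz distortion at most $D:=(2R+1)^k$ and
\[
\limsup_n \frac{|E(H_n)|}{|V(G_n)|} \le 1+\frac{k-1}{R} < 1+\varepsilon.
\]
Applying Lemma \ref{lenyeg} to the rewiring $H_n$ of $G_n$ yields
\[
r(\Gamma,\Gamma_n) \le \frac{|E(H_n)|}{|V(G_n)|} - 1 + \sum_{g\in W}\mathcal{P}(g\in K_n),
\]
where $W=S^{D^2+1}\setminus\{1\}$ is a finite set depending only on $\varepsilon$ (via $R$ and $k$), and $K_n$ is a $\mu_{\Gamma_n}$-random conjugate of $\Gamma_n$.

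Since $(\Gamma_n)$ is Farber we have $\mu_{\Gamma_n}\to\mu_{1_\Gamma}$, hence $\mathcal{P}(g\in K_n)\to 0$ for every $g\in W$; as $W$ is finite, the whole error sum tends to $0$. Taking $\limsup_n$ on both sides therefore gives $\limsup_n r(\Gamma,\Gamma_n)\le\varepsilon$, and letting $\varepsilon\to 0$ completes the proof.

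The only real subtlety, which is why the statement is not an immediate corollary of Theorems \ref{rightangcc} and \ref{rgcc}, is that we do not assume the rank gradient exists in advance: $r(\Gamma,\Gamma_n)$ need not be monotone for a non-chain Farber sequence. This is exactly the reason we invoke the sharper $\limsup$-bound on edge density from Proposition \ref{R} rather than the $\liminf$-based combinatorial cost of Theorem \ref{rightangcc}; the former controls $\limsup r(\Gamma,\Gamma_n)$ directly, while the latter would have controlled only $\liminf$. No further obstacle arises, and the lower bound $r(\Gamma,\Gamma_n)\ge 0$ is trivial, so the limit exists and equals zero.
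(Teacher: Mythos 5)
Your proof is correct and follows essentially the same route as the paper: the Gaboriau-style rewiring of the Schreier graphs (Proposition \ref{R}) fed into the groupoid-generation bound of Lemma \ref{lenyeg}, with the Farber condition killing the error term. The only difference is organizational: the paper passes to a subsequence so that $r(\Gamma,\Gamma_n)$ converges and then quotes Theorems \ref{rightangcc} and \ref{rgcc}, whereas you inline the proof of Theorem \ref{rgcc} and use the $\limsup$ form of Proposition \ref{R} to bound $\limsup_n r(\Gamma,\Gamma_n)$ directly, which cleanly sidesteps the $\liminf$ in the definition of combinatorial cost.
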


\noindent\textbf{Proof. } By passing to a subsequence we can assume that $\mathrm{r}(\Gamma,(\Gamma_{n}))$ converges. 
Let $S$ be a finite generating set of $\Gamma$ and
let $\Phi:F_{S}\rightarrow\Gamma$ be the canonical homomorphism with kernel
$N=\ker\Phi$. Let $H_{n}=\Phi^{-1}(\Gamma_{n})$ $(n\geq1)$. Since $H_{n}$ is
Farber, by Lemma \ref{ujirs} the coset actions $\sigma_{n}$ of $\Gamma$ on
$\Gamma/\Gamma_{n}$ give a sofic approximation for $\Gamma$. By Theorem
\ref{rightangcc} we have $\mathrm{cc}(\mathrm{Sch}(\Gamma,\Gamma_{n},S))=1$.
Theorem \ref{rgcc} yields
\[
\mathrm{RG}(\Gamma,(\Gamma_{n}))\leq\mathrm{cc}(\mathrm{Sch}(\Gamma,\Gamma
_{n},S))-1=0\text{.}%
\]
The Theorem holds. $\square$ \medskip

\noindent\textbf{Proof of Theorem \ref{main1}}. 
By Theorem \ref{mu_n}, the sequence $(\Gamma_n)$ is Farber. Applying Theorem \ref{ra} gives $\mathrm{RG}(\Gamma,(\Gamma_{n}))=0$. $\square$ \medskip


\section{Growth of homology torsion of right angled groups} \label{torsion}

In this section we give upper estimates on the torsion homology and prove
Theorem \ref{subexptorsion}\textbf{. }

For a finite CW-complex $M$ let $\pi_{1}M$ denote its fundamental groupoid. This 
is a measured groupoid defined as follows. Let us put the uniform probability measure on 
the vertices of $M$. Let $P(M)$ denote the set of paths in $M$. This is a groupoid under 
concatenation. Then $\pi_1M$ is the quotient of $P(M)$ modulo homotopy.  
Since the set of directed edges $E(M)$ in $M$ generate $P(M)$, their homotopy classes 
generate $\pi_{1}M$.

We now describe how to rewire a complex to another generating set of its
fundamental groupoid. 

Let $M$ be a finite CW-complex, let $\mathcal{G}=\pi_{1}M$ and let $E=E(M)$.
Let $F\subseteq\mathcal{G}$ be another generating set. We define the
\emph{rewired complex }$N$ \emph{with respect to }$F$ as follows. Let
$N_{0}=M_{0}$ and $N_{1}=F$. For every $e\in E$ choose a path $\varphi
(e)=(f_{1},\ldots,f_{k})$ in $N$ such that $e=f_{1}\cdots f_{k}$. Also, for
every $f\in F$ choose a path $\phi(f)=(e_{1},\ldots,e_{s})$ in $M$ such that
$f=e_{1}\cdots e_{s}$ (where $k$ and $s$ may depend on $e$ and $f$ respectively). 
The maps $\varphi$ and $\phi$ extend to $P(M)$ and $P(N)$ by
concatenation. For every disc bounded by the path $p$ in $M$ add the
disc bounded by $\varphi(p)$ to $N$ (type I). For every $f\in F$ add the
disc bounded by $\varphi\phi(f)f^{-1}$ to $N$ (type II).

Note that the rewired complex $N$ does depend on the choices $\varphi$ and $\phi$.
\begin{proposition}
\label{haha}Using the above notations, we have $\pi_{1}M \cong \pi_{1}N$.
\end{proposition}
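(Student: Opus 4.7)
The plan is to use that both $\pi_1 M$ and $\pi_1 N$ admit groupoid presentations on the common vertex set $M_0 = N_0$ read off directly from the CW-structure: $\pi_1 M$ has generators $E$ modulo relations $p = 1$ for each boundary path $p$ of a $2$-cell of $M$, while $\pi_1 N$ has generators $F$ modulo the type I relations $\varphi(p) = 1$ together with the type II relations $\varphi \phi(f) \cdot f^{-1} = 1$. I will then construct two mutually inverse groupoid homomorphisms using the chosen rewiring data $\varphi$ and $\phi$.

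Define $\beta \colon \pi_1 M \to \pi_1 N$ on generators by $e \mapsto [\varphi(e)]$, and $\alpha \colon \pi_1 N \to \pi_1 M = \mathcal{G}$ on generators by $f \mapsto f$ (using the inclusion $F \subseteq \mathcal{G}$). Well-definedness of $\beta$ is immediate: a defining relator $p$ of $\pi_1 M$ is sent to $\varphi(p)$, which bounds a type I disc in $N$ by construction. For $\alpha$ one evaluates the type I and type II relators inside $\mathcal{G}$: since $\varphi(e) = e$ and $\phi(f) = f$ hold in $\mathcal{G}$ by the defining property of $\varphi$ and $\phi$, the word $\varphi(p)$ evaluates to $p = 1$, and $\varphi \phi(f) \cdot f^{-1}$ evaluates to $f \cdot f^{-1} = 1$.

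To check that $\alpha$ and $\beta$ are mutually inverse, I compute on generators. Immediately $\alpha \beta(e) = \alpha([\varphi(e)]) = \varphi(e) = e$ in $\mathcal{G}$. For the opposite composition, $\beta \alpha(f) = \beta(f)$ requires rewriting $f \in \mathcal{G}$ as a word in $E$, and the canonical choice is $\phi(f) = e_1 \cdots e_s$; then $\beta(f) = \varphi(e_1) \cdots \varphi(e_s) = [\varphi \phi(f)]$ in $\pi_1 N$, which equals $[f]$ thanks to the type II disc. This step exhibits the precise role of the two families of $2$-cells, and it is essentially the only conceptual point: the type I discs are exactly what is needed to make $\beta$ well-defined, and the type II discs are exactly what is needed to identify $\varphi \phi(f)$ with $f$ in $\pi_1 N$. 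The only background tool is the standard presentation of a fundamental groupoid read off from a CW-structure (groupoid van Kampen applied to the skeletal filtration), which applies uniformly to both $M$ and $N$ since they share their $0$-skeleton.
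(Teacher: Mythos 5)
Your proof is correct and follows essentially the same route as the paper: the type I discs make the map $e\mapsto\varphi(e)$ descend to $\pi_1M\to\pi_1N$, while the inverse is obtained by evaluating paths of $N$ in $\mathcal{G}=\pi_1M$ (the paper packages this evaluation as the map $\Delta$ and routes it through the chosen paths $\phi(f)$, but the resulting homomorphism is the same as your $\alpha$), and the type II discs give $\varphi\phi(f)\simeq f$ in $N$. The only cosmetic difference is that you phrase the argument via groupoid presentations rather than via the path groupoids $P(M)$, $P(N)$; the substance is identical.
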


\begin{proof} The maps $\varphi:P(M)\rightarrow P(N)$ and
$\phi:P(N)\rightarrow P(M)$ are well defined groupoid homomorphisms that preserve the
starting and endpoints of paths.

For a path $p\in P(N)$ or $p\in P(M)$ let $\Delta(p)$ be the product of its
edges (as elements of $\mathcal{G}^{1}$). Then $\Delta$ is a groupoid
homomorphism from each of $P(N)$ and $P(M)$ to $\mathcal{G}^1$, and by the definition of $\varphi$ and $\phi$, we have
\[
\Delta(\varphi(p))=\Delta(p)\text{ (}p\in P(M)\text{) and }\Delta
(\phi(p))=\Delta(p)\text{ (}p\in P(N)\text{).}%
\]
Since $\mathcal{G}=\pi_{1}M$, two paths $p_{1},p_{2}\in P(M)$ are homotopic in
$M$ if and only if $\Delta(p_{1})=\Delta(p_{2})$.

We claim that $\varphi$ and $\phi$ respect the homotopy defined by the 2-cells of $M$ and $N$ and so
induce well-defined homomorphisms $\varphi:\pi_{1}N\rightarrow\pi_{1}M$ and $\phi:\pi
_{1}M\rightarrow\pi_{1}N$. Indeed, if $p\in P(M)$
bounds a disc, then $\varphi(p)$ is a type I disc in $N$. So $\varphi$
respects homotopy. If the loop $p\in P(N)$ starting at $x$ bounds a type I
disc in $N$, then $p=\varphi(p^{\prime})$ for some disc $p^{\prime}\in
M$, which implies
\[
\Delta(\phi(p))=\Delta(p)=\Delta(\varphi(p^{\prime}))=\Delta(p^{\prime})=1_{x}%
\]
that is, $\phi(p)$ is nullhomotopic. If the loop $p\in P(N)$ starting at $x$
bounds a type II disc in $N$, then $p=\varphi\phi(f)f^{-1}$ for some edge
$f$ in $M$. By the above, we have
\[
\Delta(\phi(p))=\Delta(\varphi\phi(f)f^{-1})=\Delta(\varphi\phi(f))\Delta
(f^{-1})=1_{x}%
\]
that is, $\phi(p)$ is nullhomotopic. So $\phi$ respects homotopy. The claim
holds and implies that $\varphi:\pi_{1}N\rightarrow\pi_{1}M$ and $\phi:\pi
_{1}M\rightarrow\pi_{1}N$ are well defined homomorphisms.

Let $p\in P(N)$. Then we have $\Delta(\phi\varphi(p))=\Delta(p)$, thus
$\phi\varphi(p)$ and $p$ are homotopic. Let $p\in P(M)$. Then by the type II
relations, $\varphi\phi(p)$ is homotopic to $p$. We get that $\phi\varphi$ and
$\varphi\phi$ are the identity maps on $\pi_{1}M$ and $\pi_{1}N$ and so
$\varphi:\pi_{1}N\rightarrow\pi_{1}M$ is an isomorphism. \end{proof} \medskip

In a more algebraic way, we assert above that the homotopy classes and the
$\Delta$-classes on $P(N)$ are equal, which establishes the isomorphism
between $\pi_{1}N$ and $\pi_{1}M$.

The following is a stardard lemma in linear algebra.

\begin{lemma}
\label{torlemma}\label{tor}Let $m,d,l \in\mathbb{N}$ and let $v_{1}, \ldots,
v_{l} \in\mathbb{Z}^{m}$. Suppose that each of the vectors $v_{i}$ is such
that the sum of the absolute values of its coordinates is at most $b$. Let
$A=\mathbb{Z}^{m}/ \sum_{i=1}^{l} \mathbb{Z }v_{i}$. Let $\mathrm{{trs}(A)}$
denote the size of the torsion subgroup of $A$. Then $\mathrm{trs}(A) \leq b^{m}$.
\end{lemma}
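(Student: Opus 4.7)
The plan is to reduce to the Smith normal form of the relation matrix and then control a single determinant by Hadamard's inequality. Let $M$ be the $l\times m$ integer matrix whose $i$-th row is $v_i$, so that $A\cong \mathbb{Z}^m/\mathrm{row\text{-}span}(M)$. Let $r=\mathrm{rank}(M)\leq m$. By the structure theorem for finitely generated abelian groups, $M$ has a well-defined sequence of nonzero elementary divisors $d_1\mid d_2\mid\cdots\mid d_r$, and
\[
A\;\cong\;\mathbb{Z}/d_1\oplus\cdots\oplus\mathbb{Z}/d_r\oplus\mathbb{Z}^{m-r},
\]
so $\mathrm{trs}(A)=d_1d_2\cdots d_r$.

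Next I would invoke the classical identity relating elementary divisors to determinantal divisors: for each $k\leq r$, the product $d_1\cdots d_k$ equals $D_k(M)$, the gcd of all $k\times k$ minors of $M$. In particular $\mathrm{trs}(A)=D_r(M)$, so it divides (and is therefore bounded above in absolute value by) any single $r\times r$ minor of $M$.

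Finally I would bound one such minor by Hadamard's inequality. Fix any $r\times r$ submatrix $M'$ of $M$. Each row of $M'$ is obtained by restricting some $v_i$ to $r$ of its coordinates, so its $\ell_1$-norm, and hence its $\ell_2$-norm, is at most $b$. Hadamard's inequality then gives
\[
|\det M'|\;\leq\;\prod_{i=1}^{r}\|\mathrm{row}_i(M')\|_2\;\leq\;b^r\;\leq\;b^m.
\]
Combining with the previous paragraph, $\mathrm{trs}(A)\leq b^r\leq b^m$, which is what we wanted. No step here is really the ``hard part''; the only thing worth pausing over is the rank-deficient case $r<m$, where $A$ has a free summand, but the torsion is still read off by the nonzero elementary divisors, so the argument is uniform in $r$.
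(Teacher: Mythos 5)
Your proof is correct and follows essentially the same route as the paper's: both identify $\mathrm{trs}(A)$ with the gcd of the maximal nonzero minors of the relation matrix (you via Smith normal form, the paper by citing Jacobson) and then bound a single such minor by Hadamard's inequality, using $\|\cdot\|_2\le\|\cdot\|_1\le b$ on the rows. Your treatment of the rank-deficient case is in fact slightly more careful than the paper's, which simply reduces to a nonsingular square matrix.
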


\noindent\textbf{Proof.} Let $X$ be the matrix whose rows are the vectors
$v_{1},\ldots,v_{l}$. By \cite{Jacob} Theorem 3.9, $\mathrm{{trs}(A)}$ is the
greatest common divisor of all non-zero minors of $A$. So to prove the lemma
we need to bind any such minor. Therefore we may assume that $X$ is
non-singular and $m=l$. From geometric consideration in the Euclidean space
$\mathbb{R}^{m}$ we know that $|\det X|$ is at most the product of the lengths
of the row vectors of $X$. So $|\det X|\leq b^{m}$ and the lemma follows.
$\square$ \medskip

In the following Lemma, we show that for a finitely presented group, the
torsion homology growth stays bounded over finite index subgroups. Note that
by \cite{kkn} this is false already in the realm of finitely generated solvable groups.

\begin{lemma}
\label{simplebound} Let $\Gamma= \langle X; R \rangle$ be a finitely presented
group. Let $|X|=d$ and let $b$ be the maximal length of the relations in $R$.
Let $H$ be a subgroup of index $n$ in $\Gamma$. Then
\[
\frac{\ln\mathrm{trs} H}{n} \leq d \ln b .
\]

\end{lemma}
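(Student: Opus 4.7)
The plan is to lift the presentation of $\Gamma$ to an index-$n$ cover and compute $H_1(H)$ as the cokernel of an integer matrix whose rows have small $\ell^1$-norm, then apply Lemma \ref{torlemma} directly.

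First I would build the presentation $2$-complex $Y$ for $\Gamma$: one vertex, $d$ oriented edges (one per element of $X$), and $|R|$ two-cells whose attaching maps are edge-loops of length at most $b$. Then $\pi_1(Y) = \Gamma$, so the subgroup $H$ of index $n$ corresponds to a connected $n$-sheeted covering $\widetilde{Y} \to Y$ with $\pi_1(\widetilde{Y}) = H$. Counting cells in the cover, $\widetilde{Y}$ has $n$ vertices, $nd$ edges, and $n|R|$ two-cells, and because edges lift to edges each attaching loop in $\widetilde{Y}$ still has combinatorial length at most $b$.

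Next I would compute $H_1(H) = H_1(\widetilde{Y})$ from the cellular chain complex. Pick a spanning tree $T$ in the $1$-skeleton; collapsing $T$ identifies $H_1$ of the $1$-skeleton with $\mathbb{Z}^m$ where
\[
m = nd - (n-1) = n(d-1) + 1 \leq nd.
\]
The boundary of each of the $n|R|$ two-cells, read off as a word in the edges and then abelianized, yields a vector $v_i \in \mathbb{Z}^m$ whose coordinate-absolute-value sum is bounded by the length of the attaching loop, hence by $b$. Therefore
\[
H_1(H) \cong \mathbb{Z}^m \Big/ \sum_{i=1}^{n|R|} \mathbb{Z}\, v_i.
\]

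Finally, Lemma \ref{torlemma} gives $\mathrm{trs}(H_1(H)) \leq b^m \leq b^{nd}$, so taking logarithms and dividing by $n$ yields $\ln \mathrm{trs}(H)/n \leq d \ln b$, as required. There is no real obstacle in this argument; the only thing to be careful about is the bookkeeping that the $n$-fold cover of a one-vertex $2$-complex genuinely has $nd$ edges and $n|R|$ two-cells with attaching-loop lengths preserved, which is immediate from the local homeomorphism property of covering maps.
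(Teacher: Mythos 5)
Your argument is correct and is essentially the paper's proof: the covering-space/spanning-tree computation you carry out is exactly the topological form of the Schreier presentation $\langle X';R'\rangle$ of $H$ that the paper uses, with the same counts $|X'|=n(d-1)+1\le nd$ and relation length $\le b$, followed by the same application of Lemma \ref{torlemma}.
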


\noindent\textbf{Proof.} Consider the Schreier presentation $\langle
X^{\prime};R^{\prime}\rangle$ for $H$ obtained from the presentation of
$\Gamma$. We have $|X^{\prime}|=n(d-1)+1<nd$ and in addition all relations in
$R^{\prime}$ have length at most $b$ considered as words in $X^{\prime}$. By
considering the Abelianization of the presentation $\langle X^{\prime
};R^{\prime}\rangle$ and using Lemma \ref{torlemma}, we get
\[
\mathrm{trs}(H)\leq b^{|X^{\prime}|}<b^{dn}%
\]
and the result follows. $\square$ \medskip

We are ready to prove the main theorem of this section. We will make use of the following definition:

\begin{definition}
 A graph sequence $(G_n)$ has {\it subexponential distortion} if there are $d_{r}$ and
$\varepsilon_{r}$ such that $\epsilon_r\to 0$ and
\[
\lim_{r\rightarrow\infty}\varepsilon_{r}\ln d_{r}=0
\]
and for every $r$ there is a rewiring $G_{n}^{(r)}$ of $G_{n}$ such that
$d_{L}(G_{n},G_{n}^{(r)})\leq d_{r}$ and
\[
\lim\sup_{n\rightarrow\infty}\frac{|E(G_{n}^{(r)})|}{|V(G_n^r)|}<1+\varepsilon_{r}\text{.}%
\]

\begin{remark}\label{rem:limsup}
Note that here we used $\lim\sup$ instead of $\lim\inf$ as in the original definition of combinatorial cost for obvious purposes.
\end{remark}

\end{definition}

\medskip

\noindent\textbf{Proof of Theorem \ref{subexptorsion}.} Let $(S,R)$ be a
finite presentation of $\Gamma$. Let $M$ be the corresponding presentation
$2$-complex, with one vertex, $\left\vert S\right\vert $ edges labeled by $S$
and the relations glued on as disks. Let $b$ be the maximal size of a disc
in $M$. Let
\[
M_{n}=\widetilde{M}/\Gamma_{n}%
\]
be the lift of $M$ with respect to $\Gamma_{n}$. The $1$-skeleton of $M_{n}$
equals the Schreier graph $G_{n}=\mathrm{Sch}(\Gamma,\Gamma_{n},S)$. Since
$\Gamma_{n}$ is a Farber sequence, $G_{n}$ is a sofic approximation for
$\Gamma$. Let $\mathcal{G}_{n}$ be the measured groupoid of the coset action
of $\Gamma$ on $\Gamma/\Gamma_{n}$ with underlying measure $\mu_{n}$ and let
$E_{n}$ be the standard generating set of $\mathcal{G}_{n}$.

By the subexponential distortion assumption, there are sequences $d_{r}$ and
$\varepsilon_{r}$ such that
$\lim_{r\rightarrow\infty}\varepsilon_{r}\ln d_{r}=0$
and for every $r$ there is a rewiring $G_{n}^{(r)}$ of $G_{n}$ such that
$d_{L}(G_{n},G_{n}^{(r)})\leq d_{r}$ and
$$
\frac{|E(G_{n}^{(r)})|}{|V(G_n^r)|}<1+\varepsilon_{r}+\delta_n^{(r)},
$$
for some $\delta_n^{(r)}>0$ which tends to $0$ with $n$ for any given $r$.

Let $K_{n}$ be
the uniform random conjugate of $\Gamma_{n}$ in $\Gamma$ and let
\[
\gamma_{n}^{(r)}=\sum\limits_{g\in S^{d_{r}^{2}+1}\backslash\{1\}}\mathcal{P}%
(g\text{ is in }K_{n})\text{.}%
\]
For a fixed $r$ let $H_{n}=G_{n}^{(r)}$. By Lemma \ref{lenyeg} there exist
generating sets $F_{n}$ of $\mathcal{G}_{n}$ of measure
\[
\widetilde{\mu}_{n}(F_{n})\leq1+\varepsilon_{r}+\delta_{n}^{(r)}+\gamma_{n}^{(r)}%
\]
such that %

\[
d_{L}(E_{n},H_{n})\leq d_{r}^{2}+1\text{.}%
\]
Let $N_{n}$ be the rewiring of the complex $M_{n}$ with respect to $F_{n}$,
such that the rewiring functions $\varphi$ and $\phi$ always 
take the edges of the respective complex to paths of length at most $d_{r}^{2}+1$. Taking into account both type I and II $2$-discs
in $N_{n}$, the maximal size of a $2$-disc in $N_{n}$ is at most
\[
\max\left\{  b(d_{r}^{2}+1),(d_{r}^{2}+1)^{2}+1\right\}  \leq4bd_{r}%
^{4}\text{.}%
\]

By Proposition \ref{haha} the measured groupoid $\pi_{1}N_{n}\cong
\pi_{1}M_{n}=\mathcal{G}_{n}$. This means that for any vertex $x$ of $N_{n}$,
the fundamental group $\pi_{1}(N_{n},x)\cong \Gamma_{n}$. That is, $N_{n}$
is a presentation complex for $\Gamma_{n}$. Choosing a spanning tree in the
$1$-skeleton of $N_{n}$ and contracting it to a point gives that $\Gamma_{n}$ admits a
presentation with number of generators at most
\[
\left(  \widetilde{\mu}_{n}(F_{n})-1\right)  \left\vert \Gamma:\Gamma
_{n}\right\vert \leq\left(  \varepsilon_{r}+\delta_{n}^{(r)}+\gamma_{n}^{(r)}\right)
\left\vert \Gamma:\Gamma_{n}\right\vert
\]
where all the relations have size at most $4bd_{r}^{4}$. This yields
\[
\frac{\log\mathrm{trs}\Gamma_{n}}{[\Gamma:\Gamma_{n}]}\leq\left(
\varepsilon_{r}+\delta_{n}^{(r)}+\gamma_{n}^{(r)}\right)  \log(4bd_{r}^{4})\text{.}%
\]
Since $(\Gamma_{n})$ is a Farber sequence, we have $\lim_{n\rightarrow\infty
}\gamma_{n}^{(r)}=0$ and we get
\[
\limsup_{n\rightarrow\infty}\frac{\log\mathrm{trs}\Gamma_{n}}{[\Gamma:\Gamma
_{n}]}\leq\varepsilon_{r}\log(4bd_{r}^{4})\text{.}%
\]
Letting $r$ tend to infinity yields
\[
\lim_{n\rightarrow\infty}\frac{\log\mathrm{trs}\Gamma_{n}}{[\Gamma:\Gamma
_{n}]}=0\text{.}%
\]
The Theorem follows. $\square$ \medskip

\subsection{Explicit estimates with and without the CSP} \label{estimates}

In this section we will focus on the case when $\Gamma$ is an arithmetic group. We show that when $\Gamma$ has the generalized congruence subgroup property (CSP) there is a polynomial bound on the size of the torsion in homology.

\begin{proposition}
Let $G$ be a semi-simple algebraic group defined over a number field $K$ and let $\Gamma$ be an arithmetic subgroup of $G$. Suppose that $\Gamma$ has CSP. Then there is a number $c>0$ (depending on $\Gamma$) such that
if $H \leq \Gamma$ has index $n$ then $|H^{ab}| < n^c$.
\end{proposition}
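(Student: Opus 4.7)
The plan is to use CSP to reduce the bound on $|H^{ab}|$ to a bound on the abelianization of a principal congruence subgroup, and then invoke the polynomially bounded generation of congruence subgroups due to Sharma--Venkataramana, combined with Lemma \ref{torlemma}, to finish.

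By CSP every finite-index subgroup of $\Gamma$ contains a principal congruence subgroup $\Gamma(I)$ for some nonzero ideal $I\trianglelefteq\mathcal{O}_K$. Given $H\le\Gamma$ of index $n$, choose $I$ such that $\Gamma(I)\le H$ and $[\Gamma:\Gamma(I)]$ is as small as possible. The congruence completion of $\Gamma$, which under CSP agrees with the profinite completion up to a finite congruence kernel, embeds as an open subgroup of a product $\prod_v G(\mathcal{O}_v)$ of compact $p$-adic analytic groups, each of dimension bounded by $\dim G$. Such products have polynomial subgroup growth of bounded degree (Lubotzky--Mann), so every open subgroup of index $n$ contains an open normal subgroup of index at most $n^{c_1}$ for a constant $c_1$ depending only on $\Gamma$. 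Translated back this gives $[\Gamma:\Gamma(I)]\le n^{c_1}$.

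Since $\Gamma(I)\trianglelefteq\Gamma$ and hence $\trianglelefteq H$, abelianizing the short exact sequence $1\to\Gamma(I)\to H\to H/\Gamma(I)\to 1$ gives the standard estimate
\[
|H^{ab}|\le |\Gamma(I)^{ab}|\cdot |H/\Gamma(I)|\le |\Gamma(I)^{ab}|\cdot n^{c_1},
\]
so it suffices to bound $|\Gamma(I)^{ab}|$ by a polynomial in $[\Gamma:\Gamma(I)]$. For this I would invoke the theorem of Sharma--Venkataramana \cite{venky}, which for higher-rank arithmetic $\Gamma$ with CSP provides a presentation of $\Gamma(I)$ in which both the number of elementary generators and the word-lengths of the defining relations are bounded polynomially in $[\Gamma:\Gamma(I)]$. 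Feeding such a presentation into Lemma \ref{torlemma} bounds $|\Gamma(I)^{ab}|$ by $[\Gamma:\Gamma(I)]^{c_2}$ for some constant $c_2$, and combining with the previous step yields $|H^{ab}|\le n^{c_1(1+c_2)}$, which is the desired polynomial bound.

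The main obstacle is the polynomially bounded presentation of $\Gamma(I)$ invoked in the last step: one needs uniform control over all levels $I$ of both the number of Steinberg-type generators and the length of the defining relations. This is exactly what \cite{venky}, together with some finite group theory as noted in the introduction, supplies in the CSP setting. The CSP hypothesis cannot be dropped, since in rank one $\Gamma$ may admit free quotients and then $|H^{ab}|$ can grow exponentially in $n$.
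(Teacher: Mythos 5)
Your overall strategy --- reduce to a principal congruence subgroup of polynomially bounded index and then bound its abelianization via an explicit presentation and Lemma \ref{torlemma} --- has a genuine gap at its final and crucial step, and it is not the route the paper takes. First, the arithmetic does not work: Lemma \ref{torlemma} bounds the torsion by $b^{m}$, where $m$ is the number of generators and $b$ the length of the relations. If, as you assert, both $m$ and $b$ are merely \emph{polynomial} in $N=[\Gamma:\Gamma(I)]$, then $b^{m}$ is of order $N^{cN^{c}}$, which is superexponential in $N$; to extract a bound $N^{c_2}$ from that lemma you would need the number of generators to be bounded by a \emph{constant} (with relation lengths polynomial). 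Second, the input you attribute to Sharma--Venkataramana is not available there: \cite{venky} is a generation result for non-uniform higher-rank arithmetic groups; it does not produce presentations of the congruence subgroups $\Gamma(I)$ with controlled relation lengths, and bounding $|\Gamma(I)^{ab}|$ genuinely requires control of relations, not just of generators (a cyclic group of order $N$ is $1$-generated). Third, that reference concerns non-uniform (isotropic) arithmetic groups, whereas the proposition is stated for arbitrary arithmetic $\Gamma$ with CSP, including cocompact ones.

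The paper's proof is entirely different and much softer: by Lubotzky--Martin \cite{LM}, CSP is equivalent to polynomial representation growth, so $\Gamma$ has at most $n^{c_0}$ irreducible characters of degree at most $n$. If $|H^{ab}|>n^{c_0+1}$, then $H$ has more than $n^{c_0+1}$ linear characters; inducing them to $\Gamma$ yields characters of degree $n$, and Frobenius reciprocity shows that each induced character can share an irreducible constituent with at most $n-1$ of the others, producing more than $n^{c_0}$ distinct irreducibles of $\Gamma$ of degree at most $n$ --- a contradiction. This is essentially Lemma 2.2 of \cite{LM}, and it avoids any presentation-theoretic input.
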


\begin{proof} By \cite{LM} the arithmetic group $\Gamma$ has CSP if and only if $\Gamma$ has polynomial representation growth, that is there exists a number $c_0>0$ such that $\Gamma$ has at most $n^{c_0}$ irreducible representations of degree at most $n$. Suppose now that $H$ is a subgroup of index $n$. We claim that $|H^{ab}| \leq n^{c_0+1}$. This is provided by Lemma 2.2 of \cite{LM} but for completeness we give a proof. Suppose that $|H^{ab}|>n^{c_0+1}$, this means that $H$ has more that $n^{c_0+1}$ one dimensional characters $\rho_i$. Let $\phi_i$ be the induced character $Ind_H^G(\rho_i)$, this is a character of $G$ of degree $n$. We claim that each $\phi_i$ can share an
irreducible constituent with at most $n-1$ other characters $\phi_j$. Indeed if $\phi_i$ and $\phi_j$ share an irreducible character of $G$ then by Frobenius reciprocity $\rho_j$ is a summand of the restricted character $(\phi_i)|_N$. However the degree of $(\phi_i)|_N$ is $n$ and so it cannot have more that $n$ irreducible constitutents. The claim follows. This means that the total number of different irreducible characters of $G$ appearing as constituents of all $\phi_i$ is more than $n^{c_0+1}/n=n^{c_0}$, contradiction.
\end{proof}

When CSP fails or is not known to hold the proof of Theorem \ref{tortheorem} gives a much weaker, but still explicit bound.

\begin{theorem} Let $\mathcal G$ be a simple algebraic group defined over a number field $K$ and let $\Gamma$ be a cocompact right angled arithmetic subgroup of $\mathcal G$. There are constants $\alpha>0$ and $A>0$ such that for any congruence subgroup $H$ of index $n$ in $\Gamma$ 
\[ \log \mathrm{trs} H  \leq \frac{A\cdot n}{(\log n)^{\alpha}}. \]

\end{theorem}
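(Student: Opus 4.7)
My plan is to revisit the proof of Theorem \ref{subexptorsion} and make it fully quantitative, replacing the abstract subexponential distortion assumption by the explicit polynomial estimate of Proposition \ref{R} and then exploiting the congruence structure to quantify the vanishing of $\gamma_n^{(R)}$.

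Recall from the proof of Theorem \ref{subexptorsion}, combined with Proposition \ref{R} applied to the right angled generating list of $\Gamma$ (of length $k$), the following: for every even integer $R$ and every congruence subgroup $H\le\Gamma$ of index $n$,
\[
\frac{\log \mathrm{trs}(H)}{n}\;\leq\;\bigl(\varepsilon_R+\delta_n^{(R)}+\gamma_n^{(R)}\bigr)\log(4b\,d_R^{4}),
\]
where $\varepsilon_R=O(1/R)$, $d_R=(2R+1)^k$, $b$ is the maximal relator length in a fixed presentation of $\Gamma$, the term $\delta_n^{(R)}$ comes from the exceptional set in Proposition \ref{R} and tends to $0$ with $n$ for fixed $R$, and
\[
\gamma_n^{(R)}=\sum_{g\in S^{d_R^2+1}\setminus\{1\}}\mathcal{P}(g\in K_n),
\]
with $K_n$ the uniform random conjugate of $H$.

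The core of the argument is to bound from below the word length of a nontrivial element that can lie in some conjugate of $H$. Let $I\lhd\mathcal{O}_K$ be the level of $H$, i.e., the largest ideal with $\Gamma(I)\subseteq H$. Since $\Gamma(I)$ is normal in $\Gamma$, the inclusion $g\in H^a$ forces $g\in\Gamma(I)$. Two standard effective estimates now enter: (i) \emph{polynomial congruence subgroup growth}, giving $c_1=c_1(\Gamma)>0$ with $[\Gamma:\Gamma(I)]\leq N(I)^{c_1}$, hence $N(I)\geq n^{1/c_1}$; and (ii) an \emph{injectivity-radius bound}, obtained by embedding $\Gamma\hookrightarrow GL_N(\mathcal{O}_K)$ and observing that a word of length $L$ in $S$ produces a matrix whose entries have all archimedean absolute values at most $C^L$, while any nontrivial element of $\Gamma(I)$ must have an entry of $g-I$ of norm at least $N(I)$. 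Combining the two yields $c_2=c_2(\Gamma,S)>0$ such that every $g\in\Gamma(I)\setminus\{1\}$ has word length $\ge c_2\log N(I)\ge (c_2/c_1)\log n$.

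Consequently $\gamma_n^{(R)}=0$ as soon as $d_R^2+1=(2R+1)^{2k}+1<(c_2/c_1)\log n$. I therefore choose $R=R(n):=c_3(\log n)^{1/(2k)}$ with $c_3$ small; the term $\delta_n^{(R)}$ is controlled uniformly by the same injectivity-radius estimate applied to the exceptional set in the Schreier graph, so it is majorized by $O(1/R(n))$. Substituting back,
\[
\frac{\log \mathrm{trs}(H)}{n}\;=\;O\!\left(\frac{\log R(n)}{R(n)}\right)\;=\;O\!\left(\frac{\log\log n}{(\log n)^{1/(2k)}}\right),
\]
and any choice of $\alpha<1/(2k)$ absorbs the $\log\log n$, delivering the required $\log\mathrm{trs}(H)\leq A n/(\log n)^\alpha$ (enlarging $A$ to handle the finitely many small indices).

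I expect the \emph{main obstacle} to be making the injectivity-radius estimate (ii) fully uniform and explicit for an arbitrary arithmetic lattice in a simple $K$-group: one must choose the embedding $\Gamma\hookrightarrow GL_N(\mathcal{O}_K)$, treat all archimedean places, and relate the norm $N(I)$ of the ideal to the sizes of entries of $g-I$ (this goes through standard but slightly technical number-theoretic bookkeeping). A secondary routine point is checking that every sequence of congruence subgroups with $n\to\infty$ is Farber, so that the framework of Theorem \ref{subexptorsion} applies; this is immediate from (i) and (ii), since any fixed nontrivial $g\in\Gamma$ eventually lies outside $\Gamma(I)$, hence outside every conjugate of $H$.
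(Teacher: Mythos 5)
Your overall skeleton matches the paper's proof: quantify the argument of Theorem \ref{subexptorsion} using Proposition \ref{R}, choose $R\asymp(\log n)^{1/2k}$, and conclude with Lemma \ref{torlemma} that $\log\mathrm{trs}(H)=O\bigl(n\log\log n/(\log n)^{1/2k}\bigr)$, so any $\alpha<1/2k$ works. However, the step where you dispose of $\gamma_n^{(R)}$ and $\delta_n^{(R)}$ contains a genuine error. You write that since $\Gamma(I)$ is normal and $\Gamma(I)\subseteq H$, the inclusion $g\in H^a$ forces $g\in\Gamma(I)$. This implication goes the wrong way: normality gives $\Gamma(I)=\Gamma(I)^a\subseteq H^a$, i.e.\ every conjugate of $H$ \emph{contains} $\Gamma(I)$, but a non-normal congruence subgroup $H$ (and its conjugates) will in general contain many elements of small word length that are not in $\Gamma(I)$. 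Hence $\mathcal{P}(g\in K_n)$ need not vanish for short $g$, and your conclusion $\gamma_n^{(R)}=0$ is unjustified; the same issue invalidates your treatment of $\delta_n^{(R)}$ (the exceptional set $E_n$ consists of fixed points of short elements on $\Gamma/H$, which your level argument does not control) and your claimed ``immediate'' proof that congruence sequences are Farber (which in the paper is the deep rigidity input, Theorem \ref{mu_n}).

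What the paper does instead at this point is invoke a quantitative injectivity-radius estimate, the discrete version of Theorem 5.2 of \cite{samurai}: for congruence subgroups of an arithmetic lattice, the number of vertices $v\in\Gamma/H$ whose $r$-ball in the Schreier graph differs from the $r$-ball in the Cayley graph is at most $e^{Cr}n^{1-\mu}$ for constants $C,\mu>0$. This bounds both $\delta_n^{(R)}$ and $\gamma_n^{(R)}$ by $O(n^{-\mu/2})$ once $R$ is chosen as above, and is a genuinely nontrivial arithmetic/spectral input that cannot be replaced by the word-length-versus-level computation alone. Your argument (ii) --- that a nontrivial element of $\Gamma(I)$ has word length at least $c\log N(I)$ --- is correct and is essentially what the paper uses in its concluding remark, but only in the simplified setting where one first passes to the \emph{principal} (hence normal) congruence subgroup $N=\Gamma(I)\le H$; that reduction in turn requires controlling $[\Gamma:N]$ in terms of $n$ and relating $\mathrm{trs}(H)$ to $\mathrm{trs}(N)$, which the paper only does under the additional hypothesis of property FAb. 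To repair your proof you would either need to supply the counting estimate for fixed points of short elements on $\Gamma/H$ for non-normal $H$, or restrict to principal congruence subgroups and add the FAb reduction.
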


\begin{proof}

Fix a right angled generating set $S$ of $\Gamma$ of size $k$. For a graph $L$, $r>0$  and vertex $v$ of $L$, by $B_L(R,v)$ we denote the ball of radius $r$ centered at $v$ in the edge metric of $L$.

We need to estimate the contribution to the edge cost from the bad vertices in the sofic approximation in the proof of Theorem \ref{tortheorem}.

By the discrete version of Theorem 5.2 of \cite{samurai}  there are constants $c$ and $\mu>0$ (depending on $\Gamma$) such that if $r>0$ and $H$ is a congruence subgroup of $\Gamma$ with $n=[\Gamma:H]$ then 

\begin{equation} \label{est}  |\{  v \in \Gamma/H \ | \ B_{\rm{Sch}(\Gamma,H,S)}(r,v)  \not \simeq B_{Cay(\Gamma,S)}(r,v) \}| \leq e^{Cr} n^{1-\mu} .\end{equation}

Recall that $k$ is the size of the right angled generating set $S$ of $\Gamma$. Let $R$ be the even integer closest to $\beta (\log n)^{1/2k}$ for some small constant $\beta>0$ such that $\max\{(3\beta)^{2k}, \beta\}< \mu/2C$. Proposition \ref{R} provides a rewiring of $\rm{Sch}(\Gamma,H,S)$ of distance at most $d=(2R+1)^k$ and edge cost at most $1+(k-1)/R + \delta_n$ where $\delta_n \leq 2k |E_n|/n$ and $E_n$ is the set of vertices of $\rm{Sch}(\Gamma,H,S)$ where the injectivity radius is less than $R+1$. From (\ref{est}) we obtain $|E_n| \leq e^{C(R+1)} n^{1-\mu}=O(n^{1-\mu/2})$ since $R \leq \beta \log n$ and $e^{CR} \leq n^{C\beta} \leq n^{\mu/2}$. Similarly (\ref{est}) shows that the number of  bad vertices of $\rm{Sch}(\Gamma,H,S)$ with injectivity radius less than $d^2+1<(3R)^{2k}$ is at most $O(n^{1-\mu/2})$. So the contribution  from the bad vertices
\[ \gamma_n=\sum\limits_{g\in S^{d^{2}+1}\backslash\{1\}}\mathcal{P}(g\text{ is in
}K) \] in the proof of Theorem \ref{subexptorsion} is at most $O(n^{-\mu/2})$.

The proof of Theorem \ref{tortheorem} now gives that $H$ can be presented by $x$ elements, where
\[ x=n((k-1)/R + \delta_n + \gamma_n)=O(n/(\log n)^{1/2k})\] and relations of length at most $(3R)^{4k}= O((\log n)^{2})$. Therefore by Lemma \ref{torlemma} we obtain
\[ \log |\mathrm{trs}(H^{ab})| \leq x \log ((3R)^{4k}))= O\left(\frac{n}{(\log n)^{1/2k}}\right)O(\log \log n)=O\left(\frac{n \log \log n}{(\log n)^{1/2k}}\right). \] The Theorem follows with any $\alpha<1/2k$. \end{proof}

We note that in case $\Gamma$ has property FAb, (e.g. when $\Gamma$ has property T) we can simplify the above argument because $H$ contains a principal congruence subgroup $N$ of index at most $n^b$ for some constant $b$. Then it is sufficient to bind $|N^{ab}|$ and the above argument is simpler: For normal subgroups the avoidance of bad vertices in the sofic approximation is equivalent to the requirement that $N$ intersects the ball of radius $(2R+1)^{2k}$ in $\mathrm{Cay}(\Gamma,S)$ trivially. This holds when $R = O((\log n)^{1/2k})$ and we reach essentially the same bound as above.


\section{Right angled lattices in higher rank Lie groups} \label{sec:coco}

In this section we construct the lattices $\Gamma$ from Theorems  \ref{coco} and \ref{SO(7,2)}. The main idea is to make sure that $\Gamma$ has a semisimple element $g \not =1$ which is not regular (i.e. has repeated eigenvalues). The (arithmetic) Borel density theorem then can be applied to ensure that $C_\Gamma(g)$ contains elements of infinite order and in this way obtain a right angled generating set for some finite index subgroup of $\Gamma$.

\subsection{Co-compact Right angled lattices in orthogonal groups}
Let $d\in\mathbb{N}$ be a square-free natural number, set $K=\mathbb{Q}(\sqrt{d})$ and denote its ring of integers
$\mathcal{O}_{K}$. Let $n \geq 7$ and define
\[
f(x_{1},\ldots,x_{n+2})=x_{1}^{2}+\ldots+x_{n}^{2}-\sqrt{d}(x_{n+1}^{2}+x_{n+2}^{2})%
\]
be a quadratic form on $V=K^{n+2}$. Let $\mathbb{G}=SO(f)$ and $\Gamma
=\mathbb{G}(\mathcal{O}_{K})$. We will construct a finite set $\{b_1, \ldots, b_m\}$ such that $\langle b_i,b_{i+1}\rangle \simeq \mathbb Z^2$ for each $i=1, \ldots, m-1$, and such that $\langle b_1,\ldots,b_m\rangle$ is of finite index in $\Gamma$.

 \bigskip

Denote by $V$ the quadratic module $(K^{n+2},f)$. As $K$ is a subfield of $\mathbb{R}$ the sign $\pm$ is defined for elements in $K$.
For subspaces $A,B$ of $V$ we will write $A\perp B$ to indicate that they are orthogonal with respect to $f$ and denote by $A \oplus B$ their orthogonal direct sum. 
Let $\mathcal L$ be the set of 3-dimensional subspaces $W$ of $V$ of signature (2,1),
i.e. $W=W_{+}\oplus W_{-}$ with $\dim W_{-}=1$ and $\dim W_{+}=2$, and $f$ is positive (resp. negative) definite on $W_+$ (resp. on $W_-$). Define an
equivalence relation $\sim$ on $\mathcal L$ by taking the reflexive and transitive closure of the relation $W_{1}\perp W_{2}$. 

\begin{proposition}\label{sim}
The relation $\sim$ has a single equivalence class on $\mathcal L$.
\end{proposition}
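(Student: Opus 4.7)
The plan is to fix a reference subspace $W_0 \in \mathcal L$ and show that every $W \in \mathcal L$ lies in $[W_0]_\sim$ through a short chain of orthogonalities. The key observation is that since $V$ has real signature $(n,2)$ with $n \geq 7$, for any $W \in \mathcal L$ the orthogonal complement $W^\perp$ has signature $(n-2,1)$ and dimension $n-1 \geq 6$, so it contains plenty of elements of $\mathcal L$.

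First I would establish the auxiliary fact that all elements of $\mathcal L$ sharing the same negative line $L$ are $\sim$-equivalent. Given $W, W' \in \mathcal L$ with $W_- = W'_- = L$, the sum $W + W'$ has dimension at most $5$ and only one negative direction (namely $L$), so $(W+W')^\perp$ has dimension at least $n-3$ and retains a negative direction, hence contains some $W^* \in \mathcal L$. This yields $W \perp W^* \perp W'$.

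Next I would reduce to the problem of connecting pairs of $1$-dimensional negative $K$-subspaces $L_1, L_2$ of $V$ via a short chain of $W$'s. I distinguish cases by the signature of $L_1 + L_2$: (i) if $L_1 = L_2$, done by the auxiliary step; (ii) if $L_1 \perp L_2$, then $(L_1+L_2)^\perp$ is positive definite of dimension $n \geq 7$, so picking orthogonal $2$-dimensional positive subspaces $U_1, U_2$ inside yields $W_i := L_i \oplus U_i$ with $W_1 \perp W_2$ directly; (iii) if $L_1 + L_2$ has signature $(1,1)$, then $W_1 + W_2$ has only one negative direction, so $(W_1 + W_2)^\perp$ contains a common orthogonal $W_3 \in \mathcal L$. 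The only difficult case is (iv), in which $L_1 + L_2$ has signature $(0,2)$ yet $L_1 \not\perp L_2$; here I would insert an intermediate line $L_3 \subset L_1^\perp$ such that $L_3 + L_2$ has signature $(1,1)$, so that cases (ii) and (iii) connect $L_1$ to $L_3$ and $L_3$ to $L_2$ respectively, glued at $L_3$ by the auxiliary step.

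The main obstacle is producing such an $L_3$ in case (iv). Here $L_1^\perp$ is, over $\mathbb{R}$, a Minkowski space of signature $(n-1,1)$; the reverse Cauchy--Schwarz inequality for timelike vectors shows that any two distinct $1$-dimensional negative subspaces of $L_1^\perp$ span a $(1,1)$ plane within $L_1^\perp$, with the ratio $f(v_3, w_2)^2 / (f(v_3) f(w_2))$ increasing without bound as the hyperbolic distance between the two lines grows. Choosing $w_2 := \pi_{L_1^\perp}(v_2)$ (which is timelike precisely because the hypothesis that $L_1 + L_2$ is negative definite forces $|f(w_2)| < |f(v_2)|$) and $v_3 \in L_1^\perp$ at sufficiently large hyperbolic distance from $K w_2$, one obtains the stronger inequality $f(v_3, v_2)^2 > f(v_3) f(v_2)$, using $f(v_3, v_2) = f(v_3, w_2)$ for $v_3 \in L_1^\perp$. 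Finally, density of $K$-rational points in the real negative cone of $L_1^\perp$ yields a $K$-rational $L_3 = K v_3$ with all the required properties, completing the chain.
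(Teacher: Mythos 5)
Your overall architecture (reduce to connecting negative lines, with an auxiliary ``gluing'' step for elements of $\mathcal L$ sharing a negative line) parallels the paper's, and cases (ii) and (iv) are workable. But the auxiliary step contains a genuine gap, and since every other step leans on it, the argument as written does not go through. The claim that for $W,W'\in\mathcal L$ with $W_-=W'_-=L$ the sum $W+W'$ ``has only one negative direction'' is false: writing $W=W_+\oplus L$ and $W'=W'_+\oplus L$, the planes $W_+$ and $W'_+$ both sit in the Lorentzian space $L^\perp$ of signature $(n,1)$, and a sum of two positive definite planes in a Lorentzian space can perfectly well contain a negative vector. Concretely, with $f=x_1^2+\cdots+x_n^2-\sqrt d\,(x_{n+1}^2+x_{n+2}^2)$, $L=Ke_{n+1}$, $W_+=\langle e_1,e_2\rangle$ and $W'_+=\langle 2e_1+e_{n+2},\,e_3\rangle$ (note $f(2e_1+e_{n+2})=4-\sqrt d>0$), one gets $W_++W'_+=\langle e_1,e_2,e_3,e_{n+2}\rangle$ of signature $(3,1)$, so $W+W'$ has signature $(3,2)$ and $(W+W')^\perp=\langle e_4,\dots,e_n\rangle$ is positive definite: it contains no member of $\mathcal L$ at all, and no common orthogonal $W^*$ exists. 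The same unjustified inference (``$L_1+L_2$ has one negative direction, hence so does $W_1+W_2$'') reappears in your case (iii), where it is repairable only if you explicitly choose the positive parts of $W_1,W_2$ inside $(L_1+L_2)^\perp$ and orthogonal to each other.

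The repair for the auxiliary step is exactly the paper's Case 2: $(W+W')^\perp$, having dimension at least $n-3$ and negative index at most $1$, does contain a positive definite plane $U$; set $W_4=L\oplus U\in\mathcal L$. Then $W+W_4=W\oplus U$ and $W'+W_4=W'\oplus U$ each have negative index exactly $1$, so each of the pairs $(W,W_4)$ and $(W',W_4)$ admits a common orthogonal member of $\mathcal L$ inside the now genuinely Lorentzian complement, giving $W\sim W_4\sim W'$ in four steps rather than two. With this patch inserted wherever you glue at a common negative line (and with the positive parts chosen orthogonally in case (iii)), your proof closes; your case (iv) construction of the intermediate line $L_3$ via the reverse Cauchy--Schwarz inequality is a legitimate, if heavier, substitute for the paper's Lemma \ref{neg}.
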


Denote $Q(v)=f(v,v)$ and $\langle v,u\rangle=\text{span}_K\{v,u\}$ for $v,u\in V$.
We say that a vector $v$ or a subspace $W$ is positive (resp. negative) if $Q$ is positive (resp. negative) definite on $\langle v \rangle$ or $W$.
\begin{lemma} \label{neg}
Let $V'\le V$ be a $5$ dimensional subspace and $v_1,v_2$ two negative vectors in $V'$. There is a negative vector $x \in V'$ such that $\langle x,v_1 \rangle$ and $\langle x,v_2 \rangle$ are both non-degenerate and isotropic.
\end{lemma}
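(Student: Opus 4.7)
The condition that $\langle x,v_i\rangle$ is non-degenerate and isotropic amounts to this $2$-dimensional plane having signature $(1,1)$, i.e.\ being a hyperbolic plane over $\mathbb{R}$, which is equivalent to
\[
\phi_i(x)\;:=\;f(x,v_i)^2-Q(x)Q(v_i)\;>\;0.
\]
Writing $x=tv_i+y$ with $y\in v_i^\perp$, a direct computation gives the clean identity $\phi_i(x)=-Q(v_i)\cdot Q(y)$, so since $Q(v_i)<0$, the two inequalities $\phi_1(x),\phi_2(x)>0$ translate into the single geometric requirement that the orthogonal projections of $x$ onto $v_1^\perp$ and onto $v_2^\perp$ are both positive vectors.

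The set of $x\in V'_{\mathbb R}$ satisfying $Q(x)<0,\ \phi_1(x)>0,\ \phi_2(x)>0$ is open in $V'_{\mathbb R}$, and $V'(K)$ is dense in $V'_{\mathbb R}$ under the identity embedding $K\hookrightarrow\mathbb{R}$; hence it suffices to exhibit a single real vector satisfying the three inequalities, since any sufficiently close $K$-rational vector will satisfy them as well.

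The signature of $V'_{\mathbb R}$ is $(a,b)$ with $a+b=5$, and since $V_{\mathbb R}$ has signature $(n,2)$ and $V'_{\mathbb R}$ contains the negative vector $v_1$ we have $b\in\{1,2\}$. If $b=1$, then each $v_i^\perp$ is positive definite, so the reformulated conditions $\phi_i(x)>0$ hold automatically whenever $x$ is not parallel to $v_i$; one simply takes any negative $x\in V'_{\mathbb R}$ outside $\mathbb{R}v_1\cup\mathbb{R}v_2$. If $b=2$, I set $W=\langle v_1,v_2\rangle$ and try $x=v_1+w$ with $w\in W^\perp$, noting that $\dim W^\perp=3$. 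The projection of $x$ onto $v_1^\perp$ is exactly $w$, so $\phi_1(x)>0$ is equivalent to $Q(w)>0$; the projection of $x$ onto $v_2^\perp$ equals $v_1-\alpha v_2+w$ with $\alpha=f(v_1,v_2)/Q(v_2)$, and a Gram matrix computation shows $\phi_2(x)>0$ becomes $Q(w)$ exceeding an explicit threshold in terms of $Q(v_i)$ and $f(v_1,v_2)$. Combined with $Q(x)=Q(v_1)+Q(w)<0$, i.e.\ $Q(w)<|Q(v_1)|$, the task reduces to checking that the resulting interval of admissible $Q(w)$ is non-empty; this I verify case by case according to the signature of $W$, which is $(0,2)$, $(1,1)$, or degenerate. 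In each sub-case the $3$-dimensionality of $W^\perp$ and its signature produce a $w$ of any prescribed positive $Q$-value in the needed range.

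The main obstacle I expect is the sub-case $b=2$ with $W$ of signature $(0,2)$, because then $W$ itself is negative definite and the conditions $\phi_2(x)>0$ and $Q(x)<0$ pull $Q(w)$ in opposite directions. The reconciliation uses the strict inequality $f(v_1,v_2)^2<Q(v_1)Q(v_2)$ forced by $W$ being negative definite, which is exactly what makes the lower and upper bounds on $Q(w)$ compatible. The degenerate case $f(v_1,v_2)^2=Q(v_1)Q(v_2)$ sits on the boundary between the two non-degenerate sub-cases and is handled by the same computation in the limit, combined with a small perturbation if needed.
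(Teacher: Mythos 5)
Your reduction is correct as far as it goes: $\langle x,v_i\rangle$ is non-degenerate of real signature $(1,1)$ precisely when $\phi_i(x)=f(x,v_i)^2-Q(x)Q(v_i)>0$, the identity $\phi_i(x)=-Q(v_i)Q(y_i)$ for the projection $y_i$ of $x$ onto $v_i^\perp$ is right, and passing to a real solution by density of $K$ in $\mathbb{R}$ is legitimate (the paper's proof uses the same density). The case $b=1$ works. The genuine gap is the sub-case $b=2$ with $f(v_1,v_2)=0$, and your proposed reconciliation of it rests on a miscalculation. With $x=v_1+w$, $w\in W^\perp$, the condition $\phi_2(x)>0$ reads $Q(w)>T$ where $T=\frac{f(v_1,v_2)^2}{Q(v_2)}-Q(v_1)$, while $Q(x)<0$ reads $Q(w)<-Q(v_1)$; these are compatible iff $\frac{f(v_1,v_2)^2}{Q(v_2)}<0$, i.e.\ iff $f(v_1,v_2)\neq 0$ --- not iff $f(v_1,v_2)^2<Q(v_1)Q(v_2)$ as you assert. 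The failure at $f(v_1,v_2)=0$ is not an artifact of the bookkeeping: every $x=v_1+w$ with $w\in W^\perp$ is then orthogonal to $v_2$, so $\langle x,v_2\rangle=\langle x\rangle\oplus\langle v_2\rangle$ is an orthogonal sum and has signature $(1,1)$ only if $Q(x)>0$, which contradicts $x$ being negative. So your ansatz provably cannot produce the required $x$ in exactly the sub-case you flagged as the main obstacle.

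The repair is to symmetrize the ansatz, which is what the paper does: after replacing $v_2$ by $-v_2$ if necessary so that $Q(v_1+v_2)<\min\{Q(v_1),Q(v_2)\}$, choose $s\in\langle v_1,v_2\rangle^\perp\cap V'$ with $\max\{-Q(v_1),-Q(v_2)\}<Q(s)<-Q(v_1+v_2)$ (possible since that intersection is at least $3$-dimensional, and a negative semidefinite subspace of a space of real signature $(n,2)$ has dimension at most $2$, so $Q$ attains arbitrary positive values there after scaling by elements of $K$), and set $x=v_1+v_2+s$. Then $Q(x)<0$ while $x-v_1=v_2+s$ and $x-v_2=v_1+s$ are positive, so each plane $\langle x,v_i\rangle$ contains vectors of both signs and is hyperbolic. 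Two smaller points: the statement does not grant that $V'$ is non-degenerate, so ``$a+b=5$'' is an unwarranted assumption --- $V'$ may have a one-dimensional radical, in which case $v_i^\perp\cap V'$ in your case $b=1$ is only positive \emph{semi}definite and $x$ must avoid a slightly larger proper subset than $\mathbb{R}v_1\cup\mathbb{R}v_2$; and $W^\perp\cap V'$ has dimension $4$, not $3$, when $W$ is degenerate.
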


\begin{proof}
By multiplying $v_2$ by $-1$ if necessary, we may suppose that 
$$
 Q(v_{1}+v_{2})<\min\left\{  Q(v_{1}),Q(v_{2})\right\}<0.
$$
Since $K=\mathbb{Q}[\sqrt{d}]$ is dense in $\mathbb{R}$ and $\langle v_1,v_2\rangle^\perp \cap V'$ is at least $3$-dimensional and so cannot be negative, we may
find a vector $s\in V'$ orthogonal to $\langle v_1,v_2\rangle$ such that 
$$
 -Q(v_{1}+v_{2})>Q(s)>\max\left\{  -Q(v_{1}%
 ),-Q(v_{2})\right\} >0. 
$$ 
Set $x=v_{1}+v_{2}+s$.
It is easy to see that $x$ is negative and that
$s+v_{2},s+v_{1}$ are positive and lie in $\langle x,v_1 \rangle$ and $\langle x,v_2 \rangle$ respectively. 
\end{proof}

\medskip

\begin{proof}[Proof of Proposition \ref{sim}]
Let $W_1,W_2\in\mathcal L$. Note that $(W_1+W_2)^\perp$, having dimension at least $3$ and rank at most $1$, contains a two dimensional positive subspace $U$.
\medskip

\textbf{Case 1:} Suppose first that $W_{1}+W_{2}$ has rank 1. Then $(W_1+W_2)^\perp$ also has rank one and since its dimension is
at least $3$ it contains a subspace $W_3 \in \mathcal L$. Therefore $W_1 \sim W_3 \sim W_2$.
\bigskip
\medskip

\textbf{Case 2:} Suppose that $W_1$ and $W_2$ share a common negative vector $v$. Then taking $W_4= \langle v \rangle \oplus U \in \mathcal L$ we have that $W_1+W_4$ and $W_2+W_4$ have rank 1 and therefore
$W_1 \sim W_4 \sim W_2$ by Case 1.
\medskip

\textbf{Case 3:} Now suppose that $W_1$ and $W_2$ share a common 2-dimensional positive subspace $Y$. We can write $W_1=Y \oplus \langle v_1 \rangle$, $W_2=Y \oplus \langle v_2 \rangle$ where the negative vectors $v_1,v_2$ are in $Y^\perp$. By Lemma \ref{neg} (applied to a subspace of the space $Y^\perp$) we can find a negative vector $x \in Y^\perp$ such that both $\langle v_1,x \rangle $ and $\langle v_2,x \rangle $ are of signature $(1,1)$.
Put $W_5= Y \oplus \langle x \rangle$. We have that $W_1+W_5= Y \oplus \langle v_1,x \rangle$ and $W_2+W_5= Y \oplus \langle v_2,x \rangle$ are both of rank 1 
and therefore $W_1 \sim W_5 \sim W_2$ by Case 1. \medskip

\textbf{The general case:} We may always choose a positive two dimensional subspace $Y \leq W_1$ and a negative 1-dimensional subspace $Z \leq W_2$. We have $Y+Z \in \mathcal L$ and
$W_1 \sim Y+Z \sim W_2$ by Cases 3 and 2. This completes the proof of Proposition \ref{sim}. 
\end{proof}

\bigskip

For an integral lattice $M$ in the 
$K$-vector space $V$ (i.e. a free $\mathcal O_K$-submodule of $V$ of maximal rank) let us write $GL(M)$ for the arithmetic subgroup of $GL(V)$ which
stabilizes $M$. The following is elementary:

\begin{proposition} \label{com} Let $M_{1},M_{2}$ be two integral lattices in $V$. The groups
$GL(M_{1})$ and $GL(M_{2})$ are commensurable.
\end{proposition}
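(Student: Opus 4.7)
The plan is to show that $GL(M_1)\cap GL(M_2)$ has finite index in each of $GL(M_1)$ and $GL(M_2)$, by exhibiting both as finite-index stabilizers of a point under an action on a finite set of intermediate lattices. The standard trick is to ``sandwich'' $M_2$ between two dilates of $M_1$ (and vice versa) using a common denominator in $\mathcal{O}_K$.

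First, since $M_1$ and $M_2$ are both free $\mathcal{O}_K$-modules of maximal rank in $V$, each generates $V$ over $K$, so there exists $c\in\mathcal{O}_K\setminus\{0\}$ with $cM_2\subseteq M_1$ and $cM_1\subseteq M_2$; equivalently $cM_1\subseteq M_2\subseteq c^{-1}M_1$. Next I would observe that the quotient $c^{-1}M_1/cM_1$ is a finite $\mathcal{O}_K$-module (annihilated by $c^2$, over which $\mathcal{O}_K$ is a finite ring), so the set $\mathcal{F}$ of $\mathcal{O}_K$-submodules of $V$ lying between $cM_1$ and $c^{-1}M_1$ is finite. In particular $M_2\in\mathcal{F}$.

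Now $GL(M_1)$ acts on $\mathcal{F}$: for $g\in GL(M_1)$ and $N\in\mathcal{F}$ we have $cM_1=g(cM_1)\subseteq gN\subseteq g(c^{-1}M_1)=c^{-1}M_1$, so $gN\in\mathcal{F}$. The stabilizer of the point $M_2\in\mathcal{F}$ under this action is exactly $GL(M_1)\cap GL(M_2)$, and since $\mathcal{F}$ is finite it has finite index in $GL(M_1)$. Interchanging the roles of $M_1$ and $M_2$ in the same argument shows $[GL(M_2):GL(M_1)\cap GL(M_2)]<\infty$ as well, whence $GL(M_1)$ and $GL(M_2)$ are commensurable.

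No step looks like a real obstacle; the only thing that needs a moment of thought is the finiteness of $\mathcal{F}$, which reduces to the fact that $\mathcal{O}_K/c^2\mathcal{O}_K$ is finite (as $\mathcal{O}_K$ is the ring of integers of a number field) and hence the finitely generated torsion $\mathcal{O}_K$-module $c^{-1}M_1/cM_1$ is finite, so it has only finitely many submodules.
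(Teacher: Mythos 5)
Your proof is correct, and the paper itself offers no argument here: it simply declares the proposition elementary and points to Platonov--Rapinchuk for the stronger fact that arithmeticity is preserved under rational isomorphisms. The sandwich-and-orbit argument you give (find $c\in\mathcal{O}_K\setminus\{0\}$ with $cM_1\subseteq M_2\subseteq c^{-1}M_1$, note that $c^{-1}M_1/cM_1$ is finite so there are finitely many intermediate $\mathcal{O}_K$-submodules, and identify $GL(M_1)\cap GL(M_2)$ as the stabilizer of $M_2$ under the induced finite action) is exactly the standard proof of this standard fact, and every step checks out.
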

In fact more is true: The class of arithmetic subgroups is preserved under any rational isomorphism of their ambient algebraic groups, (see \cite{PR}, Propostion 4.1). The next result we need is the following (well known) "arithmetic" variant of the Borel density theorem:

\begin{theorem}\label{borel} Let $k$ be a number field and let $G$ be a $k$-defined simple algebraic group of non-compact type, i.e. such that $G(k_v)$ is noncompact for at least one archimedian valuation $v$ of $k$. Then any arithmetic subgroup $\Gamma$ of $G$ is Zariski-dense in $G$. 
\end{theorem}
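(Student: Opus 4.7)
My plan is to deduce this ``arithmetic'' Borel density from its classical (real Lie group) counterpart via Borel--Harish-Chandra, and then to transfer the conclusion back to $G$ viewed as a $\bar{k}$-algebraic group.

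I first observe that Zariski density is a commensurability invariant: if $\Gamma_{1}\leq\Gamma_{2}$ has finite index, then $\Gamma_{2}$ is a finite union of cosets of $\Gamma_{1}$, so $\overline{\Gamma_{2}}^{Zar}$ is a finite union of translates of $\overline{\Gamma_{1}}^{Zar}$; the irreducibility of the connected group $G$ then forces $\overline{\Gamma_{1}}^{Zar}=\overline{\Gamma_{2}}^{Zar}$. I may therefore assume that $\Gamma=G(\mathcal{O}_{k})$ for some fixed integral structure on $G$.

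By the Borel--Harish-Chandra theorem, $\Gamma$ embeds diagonally as a lattice in the semisimple real Lie group $G_{\infty}:=\prod_{v\in S_{\infty}}G(k_{v})$, where $S_{\infty}$ is the set of archimedean places of $k$. Write $G_{\infty}=G_{\infty}^{nc}\times K$, where $K$ collects those simple factors $G(k_{v})$ that are compact and $G_{\infty}^{nc}$ collects the non-compact ones; by hypothesis $G_{\infty}^{nc}$ is non-trivial. Since $K$ is a compact normal subgroup, the projection $\pi(\Gamma)\subset G_{\infty}^{nc}$ is again a lattice, and $G_{\infty}^{nc}$ is semisimple with finite centre and no compact factors. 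The classical Borel density theorem then yields that $\pi(\Gamma)$ is Zariski dense in $G_{\infty}^{nc}$ as a real algebraic group; composing with a further coordinate projection, the image of $\Gamma$ in any chosen non-compact factor $G(k_{v_{0}})$ is Zariski dense there.

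To conclude, let $H:=\overline{\Gamma}^{Zar}\subset G$ be the Zariski closure of $\Gamma$ over $\bar{k}$. Since $\Gamma\subset G(k)$ is $\mathrm{Gal}(\bar{k}/k)$-stable, so is $H$, and hence $H$ is defined over $k$. Extending scalars to $k_{v_{0}}$, the $k_{v_{0}}$-subgroup $H_{k_{v_{0}}}\subset G_{k_{v_{0}}}$ contains $\Gamma$ and hence contains the Zariski closure of $\Gamma$ in $G(k_{v_{0}})$, which by the previous step is all of $G(k_{v_{0}})$. Using the standard fact that for a connected reductive algebraic group the real points are Zariski dense in the ambient complex algebraic group, I deduce $H_{k_{v_{0}}}=G_{k_{v_{0}}}$, and comparing dimensions forces $H=G$. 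The main obstacle is simply the bookkeeping across the various base fields ($k$, $k_{v_{0}}$, $\bar{k}$) and the handling of compact archimedean factors of $G_{\infty}$; the substantive content is imported from the classical Borel density theorem, which I treat as a black box.
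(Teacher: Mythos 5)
Your proof is correct, but it takes a genuinely different route from the paper's. You deduce the statement from the classical (dynamical) Borel density theorem: Borel--Harish-Chandra realizes $G(\mathcal{O}_k)$ as a lattice in $G_\infty=\prod_{v\mid\infty}G(k_v)$, you project away the compact part, invoke Borel density for lattices in semisimple Lie groups without compact factors, and then transfer Zariski density back to the $k$-group by Galois descent and density of real points. The paper instead gives a purely algebraic argument that never uses the lattice property: by Proposition \ref{com} the commensurator of $\Gamma$ contains $G(k)$, which is dense in $G(k_v)$ for the noncompact place; since commensurable subgroups have Zariski closures with the same identity component, that identity component is normalized by a dense (hence Zariski-dense) set, so it is normal in the simple group $G$ and therefore finite or all of $G$, and infinitude of $\Gamma$ finishes the proof. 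Your approach imports a heavier black box (measure-theoretic Borel density plus BHC) but is conceptually familiar; the paper's is lighter and only needs that $\Gamma$ is infinite and commensurated by a dense subgroup. Two small points to tidy in your write-up: (i) if $G$ is $k$-simple but not absolutely simple, a single factor $G(k_v)$ can contain both compact and noncompact simple factors, so the splitting $G_\infty=G_\infty^{nc}\times K$ should be taken over the simple factors of the Lie group $G_\infty$ rather than over the places $v$; (ii) the projection of the diagonally embedded lattice to a single noncompact factor lives in the restriction of scalars $R_{k_{v_0}/\mathbb{R}}G$, so the final density transfer needs the (routine) identification of Zariski closures under restriction of scalars. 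Neither affects the validity of the argument.
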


To prove Theorem \ref{borel} one can argue as follow. It can be easily deduced from Proposition \ref{com} that the commensurator of $\Gamma$ in $G_{k_v}$ is dense with respect to the local topology. In addition, commensurable algebraic groups must share a common identity connected component.
Therefore the Zariski connected component of the Zariski closure of $\Gamma$ is normal in the simple algebraic group $G$, and hence is either finite or equals $G$. However, by \cite[Corollary 2, page 210]{PR} $\Gamma$ is infinite, and hence it is Zariski dense. An immediate consequence is that $\Gamma$ admits elements of infinite order.

\medskip

For any subspace $W \in \mathcal L$ choose a basis $B_W$ containing a basis for $W$ and a basis for $W^\perp$ and let $M_W$ be the integral lattice it generates. Let $G_W$ be the subgroup of $G$ which stabilizes $W$ and fixes $W^\perp$ pointwise. We have that $G_W$ is a $K$-form of $SO(2,1)$ of non-compact type. The group $\Delta=G_W \cap GL(M_W)$ is an arithmetic subgroup of $G_W$ hence admits a non-torsion element, say $\gamma$. By Proposition \ref{com}, for some $n\in\mathbb{N}$, the nontrivial element $h_W=\gamma^n$ lies in $\Gamma \cap G_W$.  If $W_1,W_2 \in \mathcal L$ and $W_1 \perp W_2$ then by definition the element $h_{W_1}$ commutes with any element of $G_{W_2}$ in particular with $h_{W_2}$. 

We also invoke the celebrated:

\begin{theorem}[Margulis normal subgroup theorem \cite{Mar1}, Ch. VIII]\label{MNS}
A lattice in a higher rank simple Lie group has the normal subgroup property, i.e. all infinite normal subgroups are of finite index.
\end{theorem}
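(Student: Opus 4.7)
The plan is to follow Margulis's classical dichotomy argument. Given an infinite normal subgroup $N$ of $\Gamma$, I aim to show that $[\Gamma:N] < \infty$. Since $G$ is simple with finite center, $N$ being infinite forces it to be non-central in $\Gamma$. Setting $\bar\Gamma := \Gamma/N$, the strategy is to establish simultaneously that $\bar\Gamma$ has Kazhdan's property (T) and that $\bar\Gamma$ is amenable. Since a discrete group that is both amenable and Kazhdan is necessarily finite, this will complete the proof.

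Property (T) for $\bar\Gamma$ is the easy input: a higher rank simple Lie group satisfies (T) by Kazhdan's theorem, property (T) descends to lattices, and it passes to arbitrary quotients, so $\bar\Gamma$ inherits it directly from $G$.

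Amenability of $\bar\Gamma$ is the deep content, attacked via boundary theory. Let $P \leq G$ be a minimal parabolic. The $\Gamma$-action on the flag variety $G/P$ with its quasi-invariant measure class is amenable in Zimmer's sense because $P$ is amenable. Consider the measurable quotient $X := (G/P)/N$, viewed as a $\bar\Gamma$-space on which the induced action remains amenable. Margulis's factor theorem---the central ingredient---classifies the measurable $\Gamma$-equivariant factors of $G/P$: each such factor has the form $G/Q$ for a parabolic $P \leq Q \leq G$. Applied to $X$, this gives $X \cong G/Q$ for some such $Q$. Borel density applied to $N$ (a non-trivial normal subgroup of a higher rank lattice is Zariski dense) rules out $Q = P$, while the case $Q = G$ is handled by a separate argument combining ergodicity of $N$ on $G/P$ with property (T). In the remaining case of a proper parabolic $Q$, amenability of $Q$ combined with the amenable $\bar\Gamma$-action on $G/Q$ transfers to amenability of $\bar\Gamma$ itself.

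The central obstacle is Margulis's factor theorem, whose proof relies on the full ergodic-theoretic machinery for semisimple groups: contraction dynamics on $G/P$ via the Iwasawa and Bruhat decompositions, measurable cocycle rigidity, and genuinely the higher rank hypothesis---the analogue is false in rank one, where many rank one lattices surject onto non-abelian free groups. The careful case analysis of the parabolic $Q$ produced by the factor theorem, and the final transfer of amenability from the action to the group, are the remaining technical steps.
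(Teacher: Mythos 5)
The paper does not prove this statement: it is quoted from Margulis (Ch.~VIII) and used as a black box, so there is no in-paper argument to compare against. Your outline is the standard Margulis dichotomy --- show $\bar\Gamma=\Gamma/N$ is simultaneously Kazhdan and amenable, hence finite --- and the property (T) half and the overall architecture are correct. As written, though, it is a sketch rather than a proof: all of the content is delegated to the factor (projection) theorem, which you name but do not establish, and to an unspecified ``separate argument'' in one of the cases.

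More seriously, the concluding case analysis is wrong. The correct derivation of amenability runs: for a compact convex $\bar\Gamma$-space $A$, amenability of the $\Gamma$-action on $G/P$ yields a $\Gamma$-equivariant measurable map $\phi\colon G/P\to A$; since $N$ acts trivially on $A$, $\phi$ is $N$-invariant and factors through $X=(G/P)/N$; the factor theorem identifies $X$ with some $G/Q$, and since $N$ acts trivially on $X$ while a non-central normal subgroup of $\Gamma$ is Zariski dense (Borel density), $N$ cannot act trivially on any proper $G/Q$, so $Q=G$ and $X$ is a point. Hence $\phi$ is essentially constant and its value is a $\bar\Gamma$-fixed point in $A$ --- this is exactly how amenability is obtained, and the case $Q=G$ is the good case, not an exceptional one requiring property (T). You instead rule out only $Q=P$, set aside $Q=G$, and claim that for a proper parabolic $Q$ ``amenability of $Q$ combined with the amenable $\bar\Gamma$-action on $G/Q$ transfers to amenability of $\bar\Gamma$.'' That implication is false in general: an amenable action on a space carrying only a quasi-invariant measure does not make the acting group amenable (free groups act amenably on their boundaries); one needs an invariant probability measure, which $G/Q$ does not admit for proper $Q$. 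So the step meant to deliver amenability in your scheme fails, and the case that actually delivers it is the one you deferred.
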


We are now in a position to complete the: 
\bigskip 

\begin{proof}[Proof of Theorem \ref{O}]
Choose any subspace $W \in \mathcal L$ and let $a=h_W \in \Gamma$ be the element defined above. Let $\Gamma_{0}$ be the normal closure of $a$ in $\Gamma.$ By Theorem \ref{MNS}, $\Gamma_0$ is of finite index in $\Gamma$. In particular it is finitely generated, and hence we can find $n$ distinct conjugates $a_{i}=a^{\gamma_i}$ of $a$ ($i=1,\ldots, n$ with $\gamma_i \in \Gamma$) which together generate $\Gamma_0$. 
By Proposition \ref{sim} the sequence $\gamma_1W,\ldots,\gamma_nW$ can be refined to a sequence $W_1,\ldots,W_m\in\mathcal L,~m\ge n$, so that $W_j \perp W_{j+1}$ for $j=1,2, \ldots, m-1$. 
If $W_j=\gamma_iW$ for some $i=1,\ldots,n$, set $b_j=a^{\gamma_i}$, and otherwise set $b_j=h_{W_j}$.
Then the elements $b_i,~i=1,\ldots m$ form an ordered set of generators to $\Gamma_0$ with the desired property.
\end{proof}


\subsection{Cocompact right angled lattices in $SL(n,\mathbb R)$}\label{section:SL(n)}

Let $K$ be a totally real number field of degree $s>1$ over $\mathbb Q$ and let $L$ be a quadratic extension of $K$. Let $\sigma$ be the involution automorphism of Gal$(L/K)$. Suppose that $L$ has $2$ real embeddings $i_1,i_2=i_1 \circ \sigma$ and $2s-2$ complex embeddings $j_1,\bar j_1=j_1 \circ \sigma, \cdots j_{s-1}, \bar j_{s-1}=j_{s-1} \circ \sigma$ into $\mathbb C$. Note that the restrictions of $i_1,j_1, \ldots, j_{s-1}$ to $K$ are all the embeddings of $K$ into $\mathbb R$.
Let $\mathcal O$ be the ring of integers of $K$ and $\mathcal{V}$ the ring of integers of $L$.

For example we can take $K=\mathbb Q(\sqrt{2})$, $\mathcal O= \mathbb Z[ \sqrt{2}]$ and $L$ to be the extension $K(\alpha)$ where $\alpha^2= \sqrt{2}$.

 For $n \geq 3$ let $V=L^n$ and let $f: V \rightarrow K$ be the unitary form defined by
\[ f(x_1, \ldots, x_n)= \sum_{i=1}^n x_i x_i^\sigma\] for any $(x_1, \ldots, x_n) \in V=L^n$.

For a subring $R$ of $L$ such that $R ^\sigma= R$ let us define 
\[ H(R)= \{ x \in SL_n(R) \ | \ x (x^\sigma)^t=\mathrm{Id}\} \]

We are interested in the group $H(L)= SU(f,L)$ and its integral points $H(\mathcal V)$. Note however that the equation $x(x^\sigma)^t= Id$ does not define an algebraic group over $L$ because $\sigma$ is not a linear map over $L$. Following \cite[Section 2.3.3]{PR} we realise $H(L)$ as the $K$-rational points of a related $K$-defined algebraic group $\mathcal G$ using the restriction of scalars of $L$.

Choose a basis $u_1,u_2$ for $L$ over $K$ such that $\mathcal Ou_1 + \mathcal O u_2 \subset \mathcal V$. The left regular representation of $\rho: L \rightarrow End_K(L)$ embeds it as a $K$-subalgebra $\rho(L)$ of $M_2(K)$. This is just the restriction of scalars $R_{L/K}(L)$. Moreover $\rho(L) \cap M_2(\mathcal O)$ is isomorphic to $\mathcal Ou_1 + \mathcal O u_2$.
  
Consider the $K$-algebra $M_n(\rho(L))$ as the subalgebra of $M_{2n}(K)$ consisting of the $n \times n$ matrices with entries in $\rho(L) \subset M_2(K)$. The involution $\sigma$ acts as a $K$-linear automorhism of $M_n(\rho(L))$ by acting on the matrix entries from $\rho(L)$.

Let $\mathbb{G}$ be the $K$-defined algebraic group 
$$
 SU(n,f)= \{ X \in M_n(\rho(L)) \subset M_{2n}(K) | \  X (X^\sigma)^t= Id , \det X=1 \},
$$ 
  
Observe that $\mathbb G (K)= H(L)$. Let $\Gamma= \mathbb{G}(\mathcal O)$ be the corresponding arithmetic subgroup of integral points, and note that $\mathbb G (\mathcal O)$ embeds as a subgroup of finite index in $H(\mathcal V)$.

For each $c=1,\ldots, s-1$ the embedding $j_c : L \rightarrow \mathbb C$ extends to an injection $J_c: \mathbb{G}(K) \rightarrow  SL(n, \mathbb C)$ under which $f$ becomes a positive definite unitary form and therefore $J_c$ induces a $K$-defined homomorphism $J_c: \mathbb{G}(K) \rightarrow SU(n)$.
On the other hand the map $I: \mathbb{G}(K) \rightarrow SL(n,\mathbb R) \times SL(n,\mathbb R)$ given by $ g \mapsto (i_1(g),i_2(g))$ has image $\{(X,(X^{t})^{-1} ) \ | \ X \in SL(n, K) \}$ which is isomorphic to $SL(n, K)$.

By the Borel--Harish-Chandra theorem, the image of the embedding of $\Gamma$ into $D:=SU(n)^{s-1} \times SL(n,\mathbb R)$ given by $g \mapsto (J_1(g), \ldots, J_{s-1}(g), I(g))$ is a lattice in $D$ and since the first $s-1$ factors are compact, it projects to a lattice in $SL(n,\mathbb R)$.
Moreover $\Gamma$ is cocompact because the form $f$ is anisotropic, i.e. $f(v)=0$ for $v \in V$ implies $v=0$. This can be seen from the fact that under the embedding $J_1: \mathbb{G}(K) \rightarrow SU(n)$, $f$ becomes a positive definite form.

\begin{lemma} 
There exists an element $u$ which is a unit of infinite order in $\mathcal V^*$ and such that $uu^\sigma=1$.
\end{lemma}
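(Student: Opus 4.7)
The plan is to deduce the lemma directly from Dirichlet's unit theorem by comparing the $\mathbb Z$-ranks of $\mathcal V^{*}$ and $\mathcal O^{*}$. The condition $uu^{\sigma}=1$ is exactly the statement that $u$ lies in the kernel of the relative norm
\[
N_{L/K}\colon \mathcal V^{*}\longrightarrow \mathcal O^{*},\qquad u\mapsto uu^{\sigma}.
\]
This map is a well-defined group homomorphism: $uu^{\sigma}$ is fixed by $\sigma$, so it lies in $K\cap\mathcal V=\mathcal O$, and it is a unit because $(uu^{\sigma})^{-1}=u^{-1}(u^{-1})^{\sigma}\in\mathcal V$. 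Thus the task reduces to showing that $\ker(N_{L/K})$ has positive rank.

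First I would compute the ranks. By hypothesis $L$ has exactly $r_{L}=2$ real places and $c_{L}=s-1$ complex places, so Dirichlet gives
\[
\mathrm{rk}_{\mathbb Z}(\mathcal V^{*})=r_{L}+c_{L}-1=2+(s-1)-1=s.
\]
Since $K$ is totally real of degree $s$, Dirichlet likewise gives $\mathrm{rk}_{\mathbb Z}(\mathcal O^{*})=s-1$. The image of $N_{L/K}$ sits inside $\mathcal O^{*}$, hence has rank at most $s-1$. By rank--nullity for finitely generated abelian groups, $\ker(N_{L/K})$ has rank at least $s-(s-1)=1$, so it contains an element of infinite order. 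Any such element is a unit $u\in\mathcal V^{*}$ of infinite order with $uu^{\sigma}=1$, as required.

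I do not expect any real obstacle here. The content of the argument is the observation that the extension $L/K$ is \emph{not} CM: exactly one real place of $K$ splits in $L$ rather than becoming complex. This is precisely what creates one unit of ``extra'' rank in $\mathcal V^{*}$ beyond $\mathcal O^{*}$, forcing the norm-one subgroup to be infinite. (As a sanity check, in the running example $K=\mathbb Q(\sqrt 2)$, $L=K(\alpha)$ with $\alpha^{2}=\sqrt 2$, one has $s=2$, so $\mathrm{rk}(\mathcal V^{*})=2$ and $\mathrm{rk}(\mathcal O^{*})=1$, and the kernel of $N_{L/K}$ indeed has rank~$1$.)
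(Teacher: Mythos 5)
Your argument is correct and is essentially identical to the paper's proof: both use the relative norm homomorphism $u\mapsto uu^{\sigma}$ from $\mathcal V^{*}$ to $\mathcal O^{*}$ and compare the Dirichlet ranks ($s$ versus $s-1$) to conclude that the kernel contains an element of infinite order. Your write-up merely spells out the place-counting that the paper leaves implicit.
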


\proof The map $ x \mapsto xx^\sigma$ is a homomorphism $\pi$ from $\mathcal V^*$ to $\mathcal O^*$. By the Dirichlet unit theorem $\mathcal O^*$ has torsion free rank $s-1$ while $\mathcal V^*$ has rank $s$. Therefore $\ker \pi$ contains some element of infinite order in $\mathcal V^*$.
$\square$

\begin{theorem} \label{unit} The group $\Gamma$ is virtually right-angled.
\end{theorem}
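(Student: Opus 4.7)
The plan is to mimic the strategy used in the orthogonal case (Theorem~\ref{O}). Starting from the unit $u\in\mathcal V^{*}$ of infinite order with $uu^{\sigma}=1$ produced by the previous Lemma, set
\[
a=\mathrm{diag}(u,u^{\sigma},1,\ldots,1).
\]
The identity $uu^{\sigma}=1$ forces $\det a=1$ and $a(a^{\sigma})^{t}=I$, so $a\in H(\mathcal V)$, and $a$ has infinite order. After replacing $a$ by a suitable power we may assume $a\in\Gamma=\mathbb G(\mathcal O)$, using the commensurability of the two integral lattices (Proposition~\ref{com}). The essential property of $a$ is that it is non-regular semisimple: it acts by the two distinct eigenvalues $u,u^{\sigma}$ on $U:=\mathrm{span}_{L}(e_{1},e_{2})$ and fixes $W:=\mathrm{span}_{L}(e_{3},\ldots,e_{n})$ pointwise.

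The next step is to analyse $\mathbb M:=C_{\mathbb G}(a)$. Since the eigenvalues $u,u^{\sigma},1$ of $a$ are distinct, $\mathbb M$ preserves the decomposition $V=Le_{1}\oplus Le_{2}\oplus W$, so it is a $K$-defined reductive group containing a $K$-torus acting on $Le_{1}\oplus Le_{2}$ together with a copy of $\mathrm{SU}(n-2,f|_{W})$ acting on $W$. This centralizer is non-compact at the real place $i_{1}$: the norm-one torus $L^{1}$ contributes an $\mathbb R^{*}$-factor there, because $L\otimes_{K,i_{1}}\mathbb R\cong\mathbb R\oplus\mathbb R$, and for $n\ge 4$ one further picks up an $\mathrm{SL}(n-2,\mathbb R)$-factor from $\mathrm{SU}(n-2,f|_{W})$. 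Hence, by Theorem~\ref{borel}, the arithmetic subgroup $C_{\Gamma}(a)$ is Zariski dense in $\mathbb M$ and contains elements of infinite order.

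Now apply Margulis' normal subgroup theorem (Theorem~\ref{MNS}) to the higher-rank lattice $\Gamma\le\mathrm{SL}(n,\mathbb R)$: the normal closure $\Gamma_{0}$ of $a$ in $\Gamma$ has finite index, hence is finitely generated by finitely many conjugates of $a$, say $\Gamma_{0}=\langle a^{\beta_{1}},\ldots,a^{\beta_{r}}\rangle$ with $\beta_{i}\in\Gamma$. What remains is to interleave these generators with bridging elements to produce an ordered right-angled list. For this one needs the unitary analogue of Proposition~\ref{sim}: any two $\Gamma$-conjugates of $a$ can be joined by a finite sequence of infinite-order elements of $\Gamma$ in which consecutive elements commute. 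A conjugate $a^{\gamma}$ is encoded by the non-degenerate orthogonal decomposition $V=\gamma U\oplus\gamma W$, and a bridge between $a^{\gamma}$ and $a^{\gamma'}$ can be drawn from the arithmetic centralizer of a non-degenerate subspace common to $\gamma W$ and $\gamma' W$ of sufficiently large dimension, whose existence is again guaranteed by Borel density applied to the appropriate $K$-subgroup of $\mathbb G$. Inserting these bridges between consecutive $a^{\beta_{i}}$'s then yields an ordered list $b_{1},\ldots,b_{m}$ of infinite-order elements generating a finite-index subgroup of $\Gamma$ with $[b_{i},b_{i+1}]=1$.

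The main obstacle is precisely the connectivity step, i.e., the unitary analogue of Proposition~\ref{sim}. The anisotropy of $f$ over $K$ helps because every $K$-subspace of $V$ is then automatically non-degenerate, but one still has to carry out a careful case analysis: every intermediate subspace used as a bridge must yield a centralizer with a non-compact real factor (so that Borel density delivers an infinite-order integral element), and one must check that a sufficiently rich common non-degenerate subspace can always be found between two prescribed pairs $(\gamma U,\gamma W)$ and $(\gamma' U,\gamma' W)$.
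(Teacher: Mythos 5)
Your overall skeleton (a non-regular semisimple element, Borel density for infinite-order elements in centralizers, Margulis' normal subgroup theorem to get finitely many conjugates generating a finite-index subgroup, then a connectivity argument to interleave commuting bridges) is the right one, but the proposal has a genuine gap exactly where you flag it: the ``unitary analogue of Proposition~\ref{sim}'' is never proved, and your sketch of it is not obviously correct. Commuting with $a^{\gamma}=\mathrm{diag}(u,u^{\sigma},1,\dots,1)^{\gamma}$ forces a bridge element to preserve each of the three eigenspaces $\gamma(Le_{1})$, $\gamma(Le_{2})$, $\gamma W$, so a bridge between $a^{\gamma}$ and $a^{\gamma'}$ must preserve all six subspaces simultaneously; it is not clear that the ``arithmetic centralizer of a large common non-degenerate subspace of $\gamma W$ and $\gamma' W$'' contains an infinite-order element doing this, nor that a chain of such bridges always exists. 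Since this connectivity statement is the whole content of the theorem beyond the formal skeleton, the proof is incomplete as written.

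The paper avoids this difficulty by a different bookkeeping device. Instead of tracking conjugates of a single element and their centralizers, it attaches to each $f$-orthogonal basis $B=\{e_{1},\dots,e_{n}\}$ of $V$ the explicit abelian unit group $A_{B}=\{\mathrm{diag}(u^{d_{1}},\dots,u^{d_{n}}):\sum d_{i}=0\}\cong\mathbb{Z}^{n-1}$ (using the same unit $u$ with $uu^{\sigma}=1$, so no Borel density is needed here), and sets $T_{B}=A_{B}\cap\Gamma$, of finite index in $A_{B}$ by Proposition~\ref{com}. Two bases are declared equivalent when they \emph{share a vector}, in which case $A_{B_{1}}\cap A_{B_{2}}$ visibly contains the infinite cyclic group $\{\mathrm{diag}(u^{-(n-1)d},u^{d},\dots,u^{d})\}$. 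The connectivity lemma (Lemma~\ref{close}) then becomes almost trivial: since $f$ is anisotropic every subspace is non-degenerate, so given $e_{1}\in B_{1}$ and $e_{1}'\in B_{2}$ one passes through the bases $\{h,e_{1}\}\cup C$ and $\{d,e_{1}'\}\cup C$ where $C$ is an orthogonal basis of $(Le_{1}+Le_{1}')^{\perp}$. The right-angled list is then read off by concatenating generators of consecutive $T_{C_{i}}$'s, separated by infinite-order elements of $T_{C_{i}}\cap T_{C_{i+1}}$, all consecutive pairs lying in a common abelian group. If you want to salvage your version, the cleanest fix is to replace your subspace-based bridges by exactly this basis-based construction.
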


Let $B= \{e_1, \ldots, e_n\}$ be any $f$-orthogonal basis of $V$ over $L$, i.e. $f(e_i,e_j)=0$ for all $i \not = j$. Define the following diagonal subgroup of $H(L)$ with respect to the basis $B$: 

\[
 A_B= \{ diag(u^{d_1},u^{d_2}, \ldots ,u^{d_n}) \ | \ d_i \in \mathbb Z, \ d_1 + \cdots + d_n=0 \}.
\]

Note that, under the identification of $H(L)$ with $\mathbb{G}(K)$ we have that $A_B < \mathbb{G}(K)$ since $u u^\sigma=1$. The group $A_B$ preserves the $\mathcal V$-integral lattice of $V$ with respect to the basis $B$. In view of Proposition \ref{com}, $|A_B: A_B \cap H(\mathcal V)|$ is finite and therefore the group $T_B:=A_B\cap \Gamma$ is of finite index in $A_B$.

Define a relation on the set $X$ of orthogonal bases of $V$ over $L$ by declaring $B_1 \sim B_2$ if $B_1$ and $B_2$ share a vector.

Suppose that this is the case, specifically $B_1=\{ e_1,e_2, \cdots e_n\}$, $B_2= \{e_1,e_2', \cdots , e_n'\}$. Then $Le_2+ \cdots +Le_n= (Le_1)^{\perp}= Le'_2+ \cdots +Le'_n$ and so $A_{B_1} \cap A_{B_2}$ contains the infinite cyclic subgroup

\[ \{ diag(u^{-nd+d},u^{d}, \ldots, u^{d}) \ | \ d \in \mathbb Z \}\]
 (the definition gives the same group with respect to either basis $B_1$ or basis $B_2$)
 We can deduce that $A_{B_1} \cap A_{B_2}$ is infinite and hence $T_{B_1} \cap T_{B_2}$ is infinite.

Finally we need to check

\begin{lemma} \label{close} The transitive closure of $\sim$ is all of $X$, all orthonormal bases.
\end{lemma}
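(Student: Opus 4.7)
The plan is to show that any two orthogonal bases $B_1, B_2 \in X$ are connected by a chain of length at most three under $\sim$, by exhibiting two explicit intermediate bases. The strategy is to bridge $B_1$ and $B_2$ through a single auxiliary anisotropic vector $v$ that is orthogonal to a chosen vector from each of them.

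First I would fix any $e \in B_1$ and $f \in B_2$ and produce an anisotropic $v \in V$ with $v \perp e$ and $v \perp f$. The subspace $W = \langle e \rangle^\perp \cap \langle f \rangle^\perp$ has $L$-dimension at least $n-2 \geq 1$, since each of $\langle e\rangle^\perp$ and $\langle f\rangle^\perp$ has codimension at most one. Crucially, the Hermitian form $f$ was shown in the proof of Theorem \ref{unit} to be anisotropic over $V$ (it becomes positive definite under the embedding $J_1$), so every nonzero vector in $V$ is anisotropic; in particular any nonzero $v \in W$ works.

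Next I would build the two intermediate bases. The orthogonal complement $\langle e, v\rangle^\perp$ is non-degenerate of dimension $n-2$, so Gram--Schmidt (which is unobstructed here because every nonzero vector is anisotropic) produces an orthogonal basis $\{c_3, \ldots, c_n\}$ of it, giving an orthogonal basis $C_1 = \{e, v, c_3, \ldots, c_n\}$ of $V$. Symmetrically, applying Gram--Schmidt to $\langle f, v\rangle^\perp$ yields $C_2 = \{f, v, d_3, \ldots, d_n\} \in X$. Then $B_1 \sim C_1$ via the shared vector $e$, $C_1 \sim C_2$ via $v$, and $C_2 \sim B_2$ via $f$, establishing $B_1$ and $B_2$ lie in the same class of the transitive closure.

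The only delicate point would be guaranteeing the existence of the anisotropic $v \in W$; but this is immediate from the fact that $f$ has no isotropic vectors on $V$, combined with the hypothesis $n \geq 3$ that forces $\dim W \geq 1$. In particular, no explicit computation or rigidity input beyond what is already present in the construction of $\Gamma$ is needed, and the bound of three $\sim$-steps is uniform in $B_1, B_2$.
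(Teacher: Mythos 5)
Your proof is correct and follows essentially the same route as the paper: both arguments link $B_1$ to $B_2$ by a three-step chain through two intermediate orthogonal bases built around a common auxiliary vector lying in $\langle e\rangle^\perp\cap\langle f\rangle^\perp$, using anisotropy of the form to guarantee non-degeneracy at every stage. Your variant even absorbs the paper's separately treated case where $e$ and $f$ are scalar multiples, since the construction of $v$, $C_1$, $C_2$ does not require $e$ and $f$ to be independent.
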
 

\begin{proof} 
Consider two arbitrary elements $B_1,B_2\in X$.
Let $e_1 \in B_1$ and $e_1' \in B_2$.

If $e_1$ and $e_1'$ are scalar multiples of each other we just rescale one to the other without changing the rest. Otherwise the subspace $W=Le_1+Le_2$ is two dimensional, in which case take an orthogonal basis  $C$ for $W^\perp$. Choose a vector $h \in W$ orthogonal to $e_1$ and a vector $d \in W$ orthogonal to $e_1'$. Now 
\[ B_1 \sim \{h,e_1\} \cup C \sim \{d,e'_1\} \cup C \sim B_2. \] 
\end{proof}

We can now complete the proof of Theorem \ref{unit}.
Let $B=B_1$ be any orthonormal basis for $V$ and choose $g_1 \in T_{B_1} \backslash \{1\}$. The normal closure of $g_1$ in $\Gamma$ has finite index in $\Gamma$ by Theorem \ref{MNS}. Therefore there exist congugates $g_2, \ldots,g_k \in g^\Gamma$ such that $\langle g_1 \ldots, g_k \rangle$ has finite index in $\Gamma$. Now each $g_i$ lies in some $T_{B_i}$ for an appropriate basis $B_i \in X$. By Lemma \ref{close} we can find some sequence of bases $C_1, \ldots, C_m$ in $X$ which contains all $B_j$ and such that $C_i \sim C_{i+1}$ for each $i=1, \ldots, m-1$. This means that $T_{C_1}, \ldots, T_{C_n}$ is a sequence of abelian groups with infinite consecutive intersections whose union contains all of $g_j$ and therefore generates a subgroup of finite index in $\Gamma$. 
\qed


\section{Non-archimedean local fields} \label{sec:local}

Recently many of the results of \cite{Samurais} have been extended by A. Levit and the second named author to IRS in simple Lie groups over non-archimedean local fields \cite{GL} (this work is under preparation and is expected to be on the Arxiv soon). Below we shall use them in combination with Theorem \ref{ra} to prove Theorem \ref{p}.

We shall make use of the following definition from \cite{GL}:

\begin{definition}
Let $G$ be a locally compact group. A family of IRS, $\mathcal{F}\subset\text{IRS}(G)$ is {\it weakly uniformly discrete} if for every $\epsilon>0$ there is an identity neighbourhood $U\subset G$ such that for all $\mu\in\mathcal{F}$ we have
$$
 \mu \{\Gamma:\Gamma\cap U\ne\text{id}_G)\}<\epsilon.
$$
\end{definition}

One advantage of this notion of uniform discreteness is that it also applies to non-uniform lattices. In particular, the following proposition is straightforward:

\begin{proposition}\label{prop:WUD}
Let $\Gamma\le G$ be a lattice and let $\mathcal{F}$ be the family consisting of all $\mu_\Delta$ where $\Delta$ runs over the finite index subgroups of $\Gamma$. Then $\mathcal{F}$ is weakly uniformly discrete.\qed
\end{proposition}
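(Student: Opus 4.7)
The plan is to reduce the weak uniform discreteness of the whole family $\mathcal{F}$ to the same property for the single IRS $\mu_\Gamma$ of the ambient lattice itself. The key geometric observation is that since $\Delta\le\Gamma$, any nontrivial element of a $G$-conjugate $g\Delta g^{-1}$ is also a nontrivial element of the corresponding conjugate $g\Gamma g^{-1}$, so the ``bad'' set of conjugates for $\Delta$ is controlled by the bad set for $\Gamma$ through the finite covering $G/\Delta\to G/\Gamma$.

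First, I would describe $\mu_\Delta$ concretely. Since $\Delta$ is a lattice in $G$ (having finite index in the lattice $\Gamma$), the normalizer $N_G(\Delta)$ has finite covolume, and $\mu_\Delta$ is the pushforward of normalized Haar measure on $G/N_G(\Delta)$ under $g\mapsto g\Delta g^{-1}$. Equivalently, it is the distribution of $g\Delta g^{-1}$ when $g\Delta$ is uniform in $G/\Delta$ (the fibers of $N_G(\Delta)/\Delta$ are absorbed). For an identity neighborhood $U\subset G$ set
$$
  B_U^\Delta=\{g\Delta\in G/\Delta:\exists\,\delta\in\Delta\setminus\{e\},\ g\delta g^{-1}\in U\},
$$
so that $\mu_\Delta\{H:H\cap U\ne\{e\}\}=\mathrm{Vol}(B_U^\Delta)/\mathrm{Vol}(G/\Delta)$, with $B_U^\Gamma$ defined analogously.

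Next, consider the natural covering map $\pi:G/\Delta\to G/\Gamma$, which is $[\Gamma:\Delta]$-to-one and locally measure preserving for the Haar measures induced from $G$. Because $\Delta\subseteq\Gamma$, any $g\Delta\in B_U^\Delta$ satisfies $g\delta g^{-1}\in U$ for some $\delta\in\Gamma\setminus\{e\}$, so $\pi(g\Delta)=g\Gamma\in B_U^\Gamma$. Hence $B_U^\Delta\subseteq\pi^{-1}(B_U^\Gamma)$, which gives $\mathrm{Vol}(B_U^\Delta)\le[\Gamma:\Delta]\cdot\mathrm{Vol}(B_U^\Gamma)$. Dividing by $\mathrm{Vol}(G/\Delta)=[\Gamma:\Delta]\cdot\mathrm{Vol}(G/\Gamma)$ yields the uniform estimate
$$
  \mu_\Delta\{H:H\cap U\ne\{e\}\}\le\mu_\Gamma\{H:H\cap U\ne\{e\}\}
$$
for every finite-index $\Delta\le\Gamma$. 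So the whole family $\mathcal{F}$ is weakly uniformly discrete as soon as the single measure $\mu_\Gamma$ is.

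Finally, the weak uniform discreteness of $\mu_\Gamma$ follows from the plain discreteness of $\Gamma$. The sets $A_U=\{H\in\mathrm{Sub}_G:H\cap U\ne\{e\}\}$ decrease as $U$ shrinks along a countable neighborhood basis of the identity (which exists in the locally compact second countable groups under consideration). A subgroup $H$ lies in every $A_U$ iff the identity is an accumulation point of $H\setminus\{e\}$; but $\mu_\Gamma$-a.e.\ $H$ is a $G$-conjugate of the discrete group $\Gamma$, hence discrete, so $e$ is isolated in $H$. Thus $\bigcap_U A_U$ is $\mu_\Gamma$-null, and continuity of measure gives $\mu_\Gamma(A_U)\to0$ as $U\to\{e\}$. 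The argument is essentially routine once set up this way; the one conceptual point is the covering-map comparison in the middle paragraph, which is what makes the estimate independent of the index $[\Gamma:\Delta]$.
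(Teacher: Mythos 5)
Your proof is correct. The paper gives no argument for this proposition (it is declared straightforward and the proof is omitted), and your write-up — the pointwise inclusion $B_U^\Delta\subseteq\pi^{-1}(B_U^\Gamma)$ for the covering $G/\Delta\to G/\Gamma$, which reduces everything to the single IRS $\mu_\Gamma$, followed by discreteness of $\Gamma$ and continuity of measure along a shrinking countable neighborhood basis — is precisely the routine verification the authors intended.
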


The following theorem is proved in \cite{GL}:

\begin{theorem}\label{thm:LF}
Let $k$ be a non-Archimedean local field (of positive characteristic) and let $G=\mathbb{G}(k)$ be the group of $k$ rational points of a simple algebraic group $\mathbb{G}$ of $k$-rank at least $2$. Let $\mu_n$ be a weakly uniformly discrete sequence of distinct non-atomic IRS in $G$. Then $\mu_n\to \mu_{\text{id}_G}$. 
\end{theorem}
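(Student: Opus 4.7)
The plan is to follow the strategy of \cite[Theorem 4.2]{Samurais} and adapt it to the non-archimedean positive characteristic setting. By weak-$*$ compactness of $\text{Prob}(\text{Sub}_G)$, it suffices to show that every weak-$*$ accumulation point $\mu$ of $(\mu_n)$ equals $\mu_{\text{id}_G}$.

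First I would use the weakly uniformly discrete hypothesis to control the support of $\mu$. Given $\epsilon>0$, pick an identity neighborhood $U$ with $\mu_n\{\Gamma:\Gamma\cap U\neq\{e\}\}<\epsilon$ for all $n$. After shrinking to a compact subneighborhood $\bar V\subset U$ so that $\{\Gamma:\Gamma\cap\bar V=\{e\}\}$ is Chabauty-open, the Portmanteau theorem yields $\mu\{\Gamma:\Gamma\cap\bar V=\{e\}\}\ge 1-\epsilon$. Running $\epsilon\to 0$ along a countable neighborhood basis at $e$ shows that $\mu$ is supported on discrete subgroups of $G$.

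Next, I would decompose $\mu$ ergodically under $G$-conjugation, writing $\mu=\int\mu_\omega\, d\omega$ with almost every $\mu_\omega$ an ergodic IRS supported on discrete subgroups, and invoke A.~Levit's non-archimedean analogue \cite{Arie} of the Stuck--Zimmer theorem. This classifies ergodic IRS of a simple $k$-algebraic group of $k$-rank $\ge 2$ supported on discrete subgroups: any such measure is either $\mu_{\text{id}_G}$, concentrated on a (finite) central subgroup, or of the form $\mu_\Lambda$ for some lattice $\Lambda\le G$. The only obstruction to $\mu=\mu_{\text{id}_G}$ is therefore a possible lattice component in the ergodic decomposition of $\mu$.

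The main obstacle is ruling out such lattice components. The strategy combines a Kazhdan--Margulis type lower bound on lattice covolumes in $G$, available in the higher-rank non-archimedean setting via arithmeticity, with a Wang-type finiteness of conjugacy classes of lattices of bounded covolume. If a positive-weight component $\mu_\Lambda$ appeared, then for a small Chabauty-open neighborhood $V$ of $\Lambda^G$ one would have $\liminf \mu_n(V)>0$; applying the weakly uniformly discrete control at a scale comparable to the Kazhdan--Margulis constant, together with finiteness of nearby lattice orbits, would force a cofinite subsequence of $(\mu_n)$ to coincide with $\mu_\Lambda$, contradicting distinctness. Carrying out Kazhdan--Margulis and Wang-type rigidity in positive characteristic, without the Lie-algebraic tools available in characteristic zero, is the delicate part, and is where I expect the real technical work of \cite{GL} to sit.
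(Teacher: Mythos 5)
First, a point of comparison: the paper does not prove Theorem \ref{thm:LF} at all. It is imported verbatim from \cite{GL} (listed as ``in preparation''), and the only indication of method given is that the proof ``makes use of ideas originated in \cite{Samurais} and relies on a version of the Nevo--Stuck--Zimmer theorem'' due to Levit \cite{Arie}. Your outline is consistent with that indicated strategy, and its first stages are sound: weak-$*$ compactness reduces to identifying accumulation points, the Portmanteau argument correctly upgrades weak uniform discreteness to the statement that every accumulation point (and, by the same shrinking-neighbourhood argument, every $\mu_n$ itself) is supported on discrete subgroups, and the ergodic decomposition plus Levit's classification is the right next move.

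The genuine gap is in your final step, and it is not just deferred technicality. From $\liminf_n\mu_n(V)>0$ for a Chabauty neighbourhood $V$ of the conjugacy class of a lattice $\Lambda$ you cannot conclude that $\mu_n$ ``coincides with $\mu_\Lambda$'' for cofinitely many $n$: an IRS can charge $V$ heavily while being a nontrivial mixture, so no contradiction with distinctness arises this way. The correct mechanism is to apply the Stuck--Zimmer classification to each $\mu_n$ itself, decompose it over lattices, and drive the covolumes to infinity; but note that distinctness alone does not do this for non-ergodic measures. Indeed, for two non-conjugate lattices $\Lambda_1,\Lambda_2$ the measures $(1-\tfrac1n)\mu_{\Lambda_1}+\tfrac1n\mu_{\Lambda_2}$ are pairwise distinct, non-atomic and weakly uniformly discrete, yet converge to $\mu_{\Lambda_1}$. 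So either an ergodicity hypothesis is implicitly intended (it holds in the only application the paper makes, where $\mu_n=\mu_{\Gamma_n}$ is the uniform measure on a single conjugacy class), or lattice components must be excluded by a different argument. In the ergodic case your plan does close up: each $\mu_n$ equals $\mu_{\Lambda_n}$ for a lattice $\Lambda_n$, distinctness together with the finiteness of conjugacy classes of lattices of bounded covolume (Borel--Prasad in positive characteristic, via arithmeticity) forces $\mathrm{Vol}(G/\Lambda_n)\to\infty$, and a quantitative Kazhdan--Margulis argument then yields convergence to $\mu_{\mathrm{id}_G}$. You should make that ergodic reduction explicit; as written, the contradiction you aim for is not reached.
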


Note that Theorem \ref{thm:LF} is an analogue of Theorem \ref{mu_n} for higher rank simple groups over general local fields. The $0$ characteristic (non-Archimedean) case is more general and easier (see \cite{GL}). The proof of Theorem \ref{thm:LF} makes use of ideas originated in \cite{Samurais} and 
relies on a version of the Nevo--Stuck--Zimmer theorem which was recently obtained by Arie Levit (see \cite{Arie}). 

The following is an immidiate consequence of Lemma \ref{induced}, Theorem \ref{thm:LF} and Proposition \ref{prop:WUD}:

\begin{corollary}
Let $G$ and $\Gamma$ be as in Theorem \ref{thm:LF}, and let $\Gamma_n\le \Gamma$ be a sequence of subgroups of finite index, such that $|\Gamma:\Gamma_n|\to\infty$. Then $(\Gamma_n)$ is Farber.
\end{corollary}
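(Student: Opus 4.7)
The plan is to verify the hypotheses of Theorem \ref{thm:LF} for the sequence of induced IRSs $\mu_{\Gamma_n}^G$ on $G$, deduce that $\mu_{\Gamma_n}^G\to\mu_{\text{id}_G}$, and then pull the conclusion back to $\Gamma$ by invoking Lemma \ref{induced}, which gives exactly the Farber condition $\mu_{\Gamma_n}\to\mu_{1_\Gamma}$.

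First I would set up the verification. Since each $\Gamma_n$ is of finite index in the lattice $\Gamma$, it is itself a lattice in $G$, and so $\mu_{\Gamma_n}^G$ is a well-defined IRS on $G$ supported on the $G$-conjugacy class of $\Gamma_n$. The weak uniform discreteness of the family $\{\mu_{\Gamma_n}^G\}$ in $G$ is precisely the content of Proposition \ref{prop:WUD} applied to the lattice $\Gamma$ and its finite-index subgroups. For non-atomicity, I would argue that because $G$ is simple and $\Gamma_n$ is a proper (discrete, non-trivial) subgroup, $\Gamma_n$ cannot be normal in $G$; combined with the fact that the normalizer $N_G(\Gamma_n)$ is commensurable with a lattice and therefore not open in $G$, this forces single conjugates to have measure zero, so $\mu_{\Gamma_n}^G$ is non-atomic. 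Finally, since $|\Gamma:\Gamma_n|\to\infty$ and $G$-conjugate subgroups of $\Gamma$ share the same covolume (hence the same index in $\Gamma$), after passing to a subsequence we may assume that the IRSs $\mu_{\Gamma_n}^G$ are pairwise distinct.

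With these three conditions in place, Theorem \ref{thm:LF} yields $\mu_{\Gamma_n}^G\to\mu_{\text{id}_G}$ along the chosen subsequence, and Lemma \ref{induced} translates this into $\mu_{\Gamma_n}\to\mu_{1_\Gamma}$ on $\text{Sub}_\Gamma$. The same argument applies verbatim to any subsequence of the original sequence $(\Gamma_n)$, because every such subsequence still satisfies $|\Gamma:\Gamma_n|\to\infty$; since every subsequence has a further subsequence converging to the same limit $\mu_{1_\Gamma}$, the full sequence $\mu_{\Gamma_n}$ converges to $\mu_{1_\Gamma}$, which is the definition of $(\Gamma_n)$ being Farber.

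The step I expect to require the most care is the non-atomicity of $\mu_{\Gamma_n}^G$ in the totally disconnected positive-characteristic setting: one must check that $N_G(\Gamma_n)$ is not open in $G$, as otherwise the pushforward of Haar measure on $G/N_G(\Gamma_n)$ could carry atoms. This is where simplicity of $\mathbb{G}$ and the lattice property of $\Gamma_n$ are essential, and where some care is needed to cite (or re-derive) the commensurability-type statement that the normalizer of a lattice in a simple $k$-algebraic group of higher $k$-rank is itself a lattice, hence of positive codimension. The other two conditions, weak uniform discreteness and distinctness, are respectively a direct citation and a cheap subsequence argument.
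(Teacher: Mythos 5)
Your proposal is correct and follows exactly the route the paper takes: the paper states the corollary as an immediate consequence of Proposition \ref{prop:WUD} (weak uniform discreteness), Theorem \ref{thm:LF} (convergence of the induced IRSs to $\mu_{\mathrm{id}_G}$), and Lemma \ref{induced} (pulling the convergence back to $\Gamma$). Your additional verifications of non-atomicity via the normalizer being a (non-open) lattice, and of distinctness via covolume and a subsequence argument, are exactly the details the paper leaves implicit.
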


Theorem \ref{p} now follows from Theorem \ref{thm:LF} and Theorem \ref{ra} by observing that  $\Gamma$ is generated by its elementary subgroups $E_{i,j}$ for $1 \leq i \not = j \leq n$ which can be enumerated as $\{A_{n}\}_{n=1}^{n^2-n}$ such that $A_k$ commutes with $A_{k+1}$ for every $1 \leq k \leq n^2-2$.

\begin{remark}
It is shown in \cite{WUD} that if $G$ is a semisimple Lie group over a local field of characteristic $0$, then the set of all IRS in $G$ which have no atom at $G$ is weakly uniformly discrete.
The analog statement for local field of positive characteristic is still unknown.
\end{remark}

\section{Open problems} \label{sec:open}

In this section we present some open problems and suggest further directions
for research. \medskip

\subsection{The growth of rank for lattices in a fixed Lie group.} We
already pointed out in the introduction that Conjecture \ref{kerdes} does not hold for
rank $1$ simple Lie groups in general. In particular, lattices surjecting to a
non-Abelian free group provide easy counterexamples. However, all these give
sequences of lattices that do not approximate the ambient Lie group in the
following sense.

\begin{definition}
Let $G$ be a Lie group. A sequence of lattices $\Gamma_{n}$ is Farber, if the
invariant random subgroups $\mu_{\Gamma_{n}}\rightarrow\mu_{1}$.
\end{definition}

We suggest that the asymptotic growth of the rank of a lattice only depends on its
covolume and ambient Lie group.

\begin{problem}
\label{bigproblem}Let $G$ be a connected Lie group and let $\Gamma_{n}$ be a
Farber sequence of lattices in $G$. Then
\[
\frac{d(\Gamma_{n})-1}{\mathrm{Vol}(\Gamma_{n})}%
\]
converges.
\end{problem}

We further suspect that the above limit (which only depends on $G$) should be
$0$ except when $G$ satisfies the following:

\begin{itemize}
\item The amenable radical $A\lhd G$ is compact;

\item $G/A\cong\mathrm{PSL}(2,\mathbb{R})$.
\end{itemize}

Note that by \cite{Ge} there is a constant $C=C(G)$ such that
\[
d(\Gamma)\leq C\cdot\mathrm{Vol}(\Gamma)
\]
for every lattice $\Gamma\leq G$. In particular, this implies that the limsup
of the sequence in Problem \ref{bigproblem} exists and is at most $C$.

The corresponding question about discrete groups is as follows.

\begin{problem}
\label{discrank}Let $\Gamma$ be a finitely generated group and let $\Gamma
_{n}$ be a Farber sequence of subgroups of finite index in $\Gamma$. Then
\[
\frac{d(\Gamma_{n})-1}{\left\vert \Gamma:\Gamma_{n}\right\vert }%
\]
converges.
\end{problem}

In \cite{miknik} the weaker question is investigated: is the rank gradient
of a Farber chain  independent of the chain? That question is related to the
fixed price problem of Gaboriau, that is, whether every free action of a
countable group must have the same cost. In particular, when the ambient group
has fixed price, the rank gradient is independent of the chain (assuming it is
Farber). To the best of our knowledge a cost correspondence of this nature
has not been worked out in the realm of Lie groups. \medskip

\subsection{The distortion function of a p.m.p. action.} Let $\Gamma$ be
generated by the finite symmetric set $S$ and let $\Phi$ be a p.m.p. action of
$\Gamma$ on the Borel space $(X,\mu)$. One can define the bi-Lipshitz
distortion function of $\Phi$ as follows. Let $\mathcal{G}$ be the measured
groupoid associated to the action. Let the standard generating set
\[
R_{S}=\left\{  (x,s)\in\mathcal{G}^{1}\mid x\in X,s\in S\right\}
\]
and let
\[
N=\left\{  (x,e)\in\mathcal{G}^{1}\mid x\in X\right\}  \text{.}%
\]

Recall that the cost of $\mathcal{G}$ is defined as
\[
\mathrm{c}(\mathcal{G})=\inf_{\substack{T\subseteq\mathcal{G}^{1}\text{
Borel}\\T\text{ generates }\mathcal{G}}}\widetilde{\mu}(T)
\]
Recall that in Definition \ref{groupoiddist} we defined the distance of two generating sets $A,B\subseteq\mathcal{G}^{1}$ as
\[
d_{L}(A,B)=\inf\left\{  k\in\mathbb{N}\mid A\subseteq(B\cup B^{-1}\cup
N)^{k}\text{ and }B\subseteq(A\cup A^{-1}\cup N)^{k}\right\}  \text{.}%
\]

A priori, the distance of two generating sets can be infinite. However, using
a compactness argument, it is proved in \cite{miknik} that for any
$\varepsilon>0$ there exists a generating set $T\subseteq\mathcal{G}^{1}$ with
$\widetilde{\mu}(T)\leq\mathrm{c}(\mathcal{G})+\varepsilon$ such that
$d_{L}(T,R_{S})$ is finite.

\begin{definition}
Let the distortion function $\delta:\mathbb{R}^{+}\rightarrow\mathbb{R}$ of
$\Phi$ be defined as
\[
\delta(x)=\inf_{\substack{T\text{ generates }\mathcal{G}\\\widetilde{\mu
}(T)\leq\mathrm{c}(\mathcal{G})+1/x}}d_{L}(T,R_{S})\text{.}%
\]

\end{definition}

The distortion function $\delta$ does depend on $S$, however, it is easy to
see that it is well-defined up to a scale factor over all finite generating
sets of $\Gamma$.

One can show the following.

\begin{theorem}
\label{radistortion}Let $\Gamma$ be a right angled group and let $\Phi$ be a
free p.m.p. action of $\Gamma$. Then $\Phi$ has polynomial distortion.
\end{theorem}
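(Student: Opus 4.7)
The plan is to adapt the combinatorial argument behind Theorem \ref{rightangcc} (and the more quantitative Proposition \ref{R}) from the setting of a sofic approximation to the setting of a free p.m.p. action. Since the free action has no ``exceptional'' points --- every orbit of each cyclic subgroup $\langle s_i\rangle$ is a bi-infinite line --- the error term $F_n$ coming from vertices with small injectivity radius disappears entirely, which makes the measurable version cleaner than the finite one.

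First I would pass to a right-angled generating list $s_1,\ldots,s_k$ for $\Gamma$ with $[s_i,s_{i+1}]=1$ and every $s_i$ of infinite order, taking $S=\{s_1^{\pm1},\ldots,s_k^{\pm1}\}$. Changing the finite generating set only rescales $\delta$ by a bounded factor, so this reduction is harmless. Freeness of $\Phi$ then implies that, for each $i$, the measured subgroupoid generated by the $s_i$-edges decomposes into $\mathbb Z$-orbits. For an even integer $R$ to be chosen later, and for $1<i\le k$, I would use a standard Borel maximality argument to pick a Borel set $X_i\subseteq X$ that is maximal $R$-separated with respect to the graph metric of the $\langle s_{i-1}\rangle$-orbit equivalence relation; since each orbit is a copy of $\mathbb Z$, any such $X_i$ meets every orbit in a subset of density at most $1/R$, and hence $\mu(X_i)\le 1/R$. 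Define
\[
T \;=\; \{(x,s_1):x\in X\}\;\cup\;\bigcup_{i=2}^{k}\{(x,s_i):x\in X_i\}
\]
together with the inverses.

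Next I would show that $T$ generates $\mathcal G$ and bound $d_L(T,R_S)$. For this I copy the inductive argument from the proof of Theorem \ref{rightangcc}: if $x\in X$ and $1<m\le k$, then maximality gives $z\in X_m$ with $z=xs_{m-1}^{\ell}$ for some $|\ell|\le R$; the relation $[s_{m-1},s_m]=1$ in $\Gamma$ turns into the groupoid identity $(x,s_m)=(x,s_{m-1})^\ell\,(z,s_m)\,(z s_m,s_{m-1})^{-\ell}$, giving a representation of $(x,s_m)$ as a word of length $\le 2R+1$ in $s_{m-1}$-arrows together with one arrow from $T$. Iterating down to $m=1$ bounds the length in terms of $T$ alone by $(2R+1)^{k-1}$, which, combined with the trivial inclusion $T\subseteq R_S^{\,1}$, yields
\[
d_L(T,R_S)\;\le\;(2R+1)^{k-1}.
\]
Simultaneously, the measure of $T$ satisfies
\[
\widetilde\mu(T)\;\le\;1+\sum_{i=2}^{k}\mu(X_i)\;\le\;1+\frac{k-1}{R}.
\]
Since $\mathrm c(\mathcal G)=1$ by Gaboriau's theorem (or, alternatively, this is re-proved by the very construction), choosing $R=\lceil (k-1)x\rceil$ gives a generating set of measure at most $\mathrm c(\mathcal G)+1/x$ at distance at most $(2(k-1)x+1)^{k-1}$ from $R_S$, hence $\delta(x)=O(x^{k-1})$, which is polynomial.

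The only step requiring real care is the Borel maximality argument producing $X_i$ as a measurable maximal $R$-separated transversal in each $\langle s_{i-1}\rangle$-orbit. This is standard on a standard Borel probability space --- one can either invoke the marker lemma for free $\mathbb Z$-actions or build $X_i$ by a transfinite Borel exhaustion using a Borel linear order on $X$ --- but it is the one place where the finite-graph argument of Theorem \ref{rightangcc} has to be replaced with a genuinely measure-theoretic ingredient. Once this is in place, everything else is a direct transcription of the combinatorial argument, and Lemma \ref{lenyeg}-style book-keeping is not needed since, unlike in the sofic case, here each $s_i$-arrow commutes exactly (not approximately) with $s_{i-1}$-arrows everywhere on $X$.
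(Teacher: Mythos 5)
Your proposal is correct and follows essentially the same route as the paper: keep all $s_1$-arrows, restrict each $s_i$-arrow ($i>1$) to a measurable set of measure about $1/R$ that is $R$-dense in the $\langle s_{i-1}\rangle$-orbits, and use the exact commutation $[s_{i-1},s_i]=1$ to bound the distortion by $(2R+1)^{O(k)}$. The only (harmless) difference is the measurable selection step: the paper sidesteps the existence of a Borel maximal $R$-separated set by applying Rokhlin's lemma to each free $\mathbb{Z}$-action of $\langle s_{i-1}\rangle$, obtaining a set of measure $1/R+\varepsilon$ whose first $R$ translates cover $X$, whereas you invoke the marker lemma / Borel greedy construction to get a genuinely maximal $R$-separated Borel set of measure at most $1/R$.
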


The proof is implicitly contained in Gaboriau \cite{gabor}. It follows the same
argument as in the proof of Theorem \ref{ccsofapprox} except that one may not be able to choose a maximal
$R$-separated vertex set for the $C_{i}$-cycle metric in a measurable way.
However, it is possible to do this by paying an arbitrary small extra price
using Rokhlin's lemma. Indeed, the powers of each generator $g_{m}$ provide a free action of
$\mathbb{Z}$, and thus one can choose a measurable subset $X_{m}\subseteq X$
of measure $1/R+\varepsilon$ such that
\[
\bigcup\limits_{i=0}^{R-1}X_{m}g_{m}^{i}=X\text{.}%
\]

The above distortion function seems to be interesting already for amenable
groups. By the Ornstein-Weiss theorem \cite{ornweiss}, all free actions of
such groups are orbit equivalent to $\mathbb{Z}$, and thus have cost $1$, but
the orbit equivalence may be a quite complicated coding and the theorem does
not tell anything about its distortion. Torsion free nilpotent groups are
right angled, so they fall under Theorem \ref{radistortion}, but already for
solvable groups, we do not see any general upper bound for the distortion function. In particular, there
may be examples of exponential distortion already in this class. For arbitrary
groups, we do not know the answer to the following.

\begin{problem}
Is the distortion function of a free p.m.p. action of a finitely generated
group always bounded by an exponential function?
\end{problem}

Note that by adapting the Abert-Weiss theorem on weak containment
\cite{abeweiss}, one can show that the Bernoulli actions of $\Gamma$ dominate
all free actions of $\Gamma$ in terms of the distortion function. \medskip

\subsection{The growth of the first homology torsion.} The first problem
listed here (communicated to us by Marc Lackenby) clearly shows the general
lack of understanding on how to estimate the torsion homology from below.

\begin{problem}
\label{Problem:torsion} Is there a finitely presented group $\Gamma$ and a
normal chain $\Gamma_{n}$ in $\Gamma$ with trivial intersection with
\[
\mathrm{Trs}(\Gamma,(\Gamma_{n}))>0\text{?}%
\]

\end{problem}

The expected results are much bolder. In particular, it is conjectured by
Bergeron and Venkatesh \cite{berg} that for any closed arithmetic hyperbolic 3-manifold group $\Gamma$ and congruence chain
$(\Gamma_{n})$ in $\Gamma$, one has
\[
\mathrm{Trs}(\Gamma,(\Gamma_{n}))=\frac{1}{6\pi}\mathrm{Vol}(\Gamma)\text{.}%
\]
This, in particular, would be a rather direct way to express the covolume of
$\Gamma$ using torsion homology. The later paper of Bergeron, Sengun and Venkatesh \cite{bergsengvenk} advances towards a $3$-manifold group example for Problem \ref{Problem:torsion} but it is still conditional on a conjectured density property of a certain spectral measure. \medskip

A very recent result of Bader, Gelander and Sauer
\cite{BGS} states that for every $d\neq3$ there is a constant $C=C(d)$ such
that for every negatively curved analytic $d$-manifold $M$ with sectional
curvature bounded below by $-1$, we have
\[
\log\mathrm{Trs}\mathrm{H}_{k}(M,\mathbb{Z})\leq C\cdot\mathrm{Vol}(M)
\]
for all $k$. In particular if $\Gamma$ is a lattice in a rank one Lie group
and $\Gamma_{n}\leq\Gamma$ is as in Problem \ref{Problem:torsion} then
$\mathrm{limsup}\log\mathrm{Trs}\mathrm{H}_{1}(\Gamma_{n},\mathbb{Z})$ is finite.
Conjecturally, the analogue of the result of \cite{BGS} should hold for every
simple Lie group not locally isomorphic to $\mathrm{PSL}(2,\mathbb{C})$. 

One could attempt to extend the Bergeron--Venkatesh conjecture to a general
problem in the spirit of Problem \ref{bigproblem}, using normalized log
torsion instead of the normalized rank. However, as a recent example of Brock
and Dunfield \cite{dubro} shows, already for hyperbolic $3$-manifolds, this
direct translation would fail. They prove the existence of a Farber sequence of
hyperbolic integer homology $3$-spheres.
This in particular means that the first
homology torsion stays trivial. Note that as Brock and Dunfield \cite[1.11]%
{dubro} point out, this apparent deviation from the Bergeron-Venkatesh
conjecture may be because the right invariant to study in this situation also
has to involve the so-called regulator. 
In the other direction, it is shown in the recent preprint by Bader, Gelander and Sauer, \cite{BGS} that for any function $f:\mathbb{N}\to\mathbb{R}^+$ there is a Farber sequence of (non-arithmetic) compact rational homology spheres hyperbolic $3$ manifolds
whose normalised torsion grows faster than $f$. By the main result of \cite{BGS} this can happen only in dimension $3$.
The conjecture \cite[Conjecture
1.13]{dubro} stated for general Lie groups instead of just $\mathrm{SL}%
(2,\mathbb{C})$, suggests itself as the right analogue of Problem \ref{bigproblem}
for torsion. \medskip\

\subsection{Higher degree torsion}

It is natural to ask whether the right angled condition implies the vanishing
of the torsion growth on higher homologies. As we expect the second homology
torsion growth of $SL(3,\mathbb{Z})$ to be positive, the direct answer is no.
However, note that in a work in progress, Abert,
Bergeron and Gaboriau introduce a higher homotopy version of being right
angled and prove the vanishing of the torsion growth on some higher homologies
for these groups. Note that as of now, their approach do not apply to cocompact
lattices. Also note that the congruence subgroup property does not seem to be
relevant for higher torsion. So, as of now, we do not know a cocompact lattice
with vanishing second torsion homology growth over Farber sequences.

\end{document}